\newtheoremstyle{my theoremstyle}
{1.0em}                    
    {1.0em}                    
    {\itshape}                   
    {}                           
    {\scshape}                   
    {.}                          
    {.5em}                       
    {}  
\newtheoremstyle{dfn}
{1.0em}                    
    {1.0em}                    
    {}                   
    {}                           
    {\scshape}                   
    {.}                          
    {.5em}                       
    {}  
\theoremstyle{my theoremstyle}
   \newtheorem{thm}{Theorem}[section]
   \newtheorem{lem}[thm]{Lemma}
   \newtheorem{prop}[thm]{Proposition}
   \newtheorem{cor}[thm]{Corollary}
\theoremstyle{dfn}
\theoremstyle{remark}   
   \newtheorem{rem}[thm]{{\scshape Remark}}
\newcommand{\C}{\mathbb{C}}
\newcommand{\Q}{\mathbb{Q}}
\newcommand{\Z}{\mathbb{Z}}
\renewcommand{\a}{\alpha}
\renewcommand{\b}{\beta}
\renewcommand{\c}{\gamma}
\newcommand{\e}{\varepsilon}
\renewcommand{\d}{\delta}
\renewcommand{\k}{\kappa}
\renewcommand{\l}{\lambda}
\newcommand{\khat}{\widehat{\k^\times}}
\newcommand{\ol}[1]{\overline{#1}}
\newcommand{\hF}[5]{{}_{#1}F_{#2}\left({#3 \atop #4}; #5 \right)}
\newcommand{\FA}[4]{F_A^{(#1)}\left({#2\atop#3};#4\right)}
\newcommand{\FB}[4]{F_B^{(#1)}\left({#2\atop#3};#4\right)}
\newcommand{\FC}[4]{F_C^{(#1)}\left({#2\atop#3};#4\right)}
\newcommand{\FD}[4]{F_D^{(#1)}\left({#2\atop#3};#4\right)}
\newcommand{\FAA}[3]{F_2\left({#1\atop#2};#3\right)}
\newcommand{\FBB}[3]{F_3\left({#1\atop#2};#3\right)}
\newcommand{\FCC}[3]{F_4\left({#1\atop#2};#3\right)}
\begin{document}
\title[]{Sum representations of Appell-Lauricella functions over finite fields using confluent hypergeometric functions and their applications}
\author{Akio Nakagawa}
\date{\today}
\address{Institute of Science and Engineering,
Faculty of Mathematics and Physics,
Kanazawa University,
Kakuma, Kanazawa, Ishikawa, 920-1192, Japan}
\email{akio.nakagawa.math@icloud.com}
\keywords{Hypergeometric functions over finite fields; Appell-Lauricella functions; Character sums}
\subjclass{11T24, 33C15, 33C65}

\maketitle

\begin{abstract}
We prove sum representations of Appell-Lauricella functions over a finite field using confluent hypergeometric functions over the finite field.
As an application, we also prove transformation formulas, summation formulas and reduction formulas for Appell-Lauricella functions over the finite field.
\end{abstract}


\section{Introduction}
Over $\C$, one-variable hypergeometric functions ${}_mF_n$ are defined by the power series
$$ \hF{m}{n}{a_1,\dots,a_m}{b_1,\dots,b_n}{z} := \sum_{k=0}^\infty \dfrac{\prod_{i=1}^m (a_i)_k}{(1)_k \prod_{i=1}^n (b_i)_k}z^k.$$ 
Here, $a_i, b_i \in \C$ ($b_i \not\in \Z_{\leq 0}$) are parameters and $(a)_k = \Gamma(a+k)/\Gamma(a)$ is the Pochhammer symbol.
The functions ${}_mF_n$ are called confluent when $m \neq n+1$.
Lauricella's hypergeometric functions $F_A^{(n)}, F_B^{(n)}, F_C^{(n)}$ and $F_D^{(n)}$ with $n$ variables (Appell's functions $F_2, F_3, F_4$ and $F_1$ respectively, when $n=2$) are generalizations of the functions ${}_2F_1$ (cf. \cite{Matsumoto}). 
For example,
$$\FA{n}{a ; b_1, \dots, b_n}{c_1, \dots, c_n}{z_1, \dots, z_n} := \sum_{m_1,\dots,m_n =0}^\infty \dfrac{(a)_{m_1+\cdots+m_n} \prod_{i=1}^n (b_i)_{m_i}}{\prod_{i=1}^n (1)_{m_i} (c_i)_{m_i}}\prod_{i=1}^n z_i^{m_i}.$$

Over a finite field $\k$, one-variable hypergeometric functions were defined by Koblitz \cite{Koblitz}, Greene \cite{Greene}, McCarthy \cite{Mc} and Fuselier-Long-Ramakrishna-Swisher-Tu \cite{FLRST} when $m=n+1$, and by Katz \cite{Katz} and Otsubo \cite{Otsubo} in general.
Appell's functions over $\k$ were defined by Li-Li-Mao \cite{LLM}, He \cite{He}, He-Li-Zhang \cite{HLZ} and Ma \cite{Ma}, Tripathi-Barman \cite{TB} and Tripathi-Saikia-Barman \cite{TSB}.
For general $n$, $F_D^{(n)}$ were defined by Frechette-Swisher-Tu \cite{FST} and He \cite{He}, and $F_A^{(n)}$ were defined by Chetry-Kalita \cite{CK}. Otsubo \cite{Otsubo} gave a definition of all the Lauricella functions.
Here, we use Otsubo's hypergeometric functions over $\k$:
\begin{align*}
&\hF{m}{n}{\a_1, \dots, \a_m}{\b_1, \dots, \b_n}{\l},\\
&\FA{n}{\a\, ;\, \b_1, \dots, \b_n}{\c_1, \dots, \c_n}{\l_1, \dots,\l_n},\quad \FB{n}{\a_1, \dots, \a_n\, ;\, \b_1, \dots, \b_n}{\c}{\l_1, \dots,\l_n},\\
&\FC{n}{\a\, ;\, \b}{\c_1, \dots, \c_n}{\l_1, \dots,\l_n},\quad \FD{n}{\a\, ;\, \b_1, \dots, \b_n}{\c}{\l_1, \dots, \l_n},
\end{align*}
where the parameters $\a, \b, \c, \a_i, \b_i$ and $\c_i$ are multiplicative characters of $\k$ and $\l, \l_i \in \k$. 
See Section 2 for the definition and \cite[Remark 2.13]{Otsubo} for the relation with other definitions.
One of the merits of Otsubo's definition is that we can treat confluent hypergeometric functions over $\k$.
Similarly to the complex case, various transformation formulas, summation formulas and reduction formulas for Appell-Lauricella functions over finite fields are known (cf. \cite{CK,FST,He,HLZ,LLM,Ma,N,TB,TB2,TSB}).


The first purpose of this paper is to prove the following sum representations using confluent hypergeometric functions for Lauricella's functions over $\k$ (Theorem \ref{FA and FC integral rep.}):  
\small
\begin{align*}
 & -g(\a) \FA{n}{\a\, ;\, \b_1,\dots,\b_n}{\c_1,\dots,\c_n}{\l_1,\dots,\l_n} = \sum_{t \in \k^\times} \psi(t) \a(t) \prod_{i=1}^n \hF{1}{1}{\b_i}{\c_i}{\l_i t},\\
 & -\dfrac{q}{g^\circ(\c)} \FB{n}{\a_1,\dots,\a_n\, ;\, \b_1,\dots,\b_n}{\c}{\l_1,\dots,\l_n}= \sum_{t\in\k^\times} \psi(-t) \ol{\c}(t) \prod_{i=1}^n \hF{2}{0}{\a_i, \b_i}{}{\dfrac{\l_i}{t}},\\
 & - g(\a^2) \FC{n}{\a\, ;\, \a\phi}{\b_1,\dots, \b_n}{\l_1, \dots, \l_n} = \sum_{t \in \k^\times} \psi(t) \a^2(t) \prod_{i=1}^n \hF{0}{1}{}{\b_i}{\dfrac{\l_i t^2}{4}}\,\, (\mathrm{char}(\k) \neq 2),\\
& g(\a)g(\b) \FC{n}{\a\, ;\, \b}{\c_1,\dots,\c_n}{\l_1,\dots,\l_n} = \sum_{s, t \in \k^\times} \psi(s+t)\a(s)\b(t) \prod_{i=1}^n \hF{0}{1}{}{\c_i}{\l_i st}.
\end{align*}
\normalsize
Here, $q=\#\k$, $\psi$ is a fixed non-trivial additive character of $\k$, $g(\a)$ and $g^\circ(\a)$ are Gauss sums, $\phi$ is the quadratic character and $\ol{\a} = \a^{-1}$.
These formulas are useful to prove certain transformation, summation and reduction formulas for Appell-Lauricella functions over $\k$.
The first and the third formulas above are finite field analogues of the integral representations (cf. \cite[p.285]{Srivastava-Karlsson} and \cite{Srivastava-Exton}) of the complex functions:
\begin{align*}
& \Gamma(a) \FA{n}{a\, ;\, b_1, \dots, b_n}{c_1,\dots,c_n}{z_1, \dots, z_n}
 = \int_0^\infty e^{-t} t^{a-1} \prod_{i=1}^n \hF{1}{1}{b_i}{c_i}{z_i t}\, dt,\\
& \Gamma(2a) \FC{n}{a\, ;\, a+\frac{1}{2}}{b_1,\dots,b_n}{z_1,\dots, z_n} = \int_0^\infty e^{-t}t^{2a-1} \prod_{i=1}^n \hF{0}{1}{}{b_i}{\dfrac{z_i t^2}{4}}\, dt.
\end{align*}
However, as far as the author knows, integral representations of Lauricella $F_B$ and $F_C$ over $\C$ corresponding to the second and the fourth formulas above are not known.
For Lauricella's $F_D$ over $\C$, there is a similar integral representation, and its analogue has already been obtained by the author \cite{N} (see also Remark \ref{rem1}).


The second purpose of this paper is to prove transformation, summation and reduction formulas for Appell-Lauricella functions over $\k$ mainly by using the sum representations above and product formulas for confluent hypergeometric functions over $\k$ due to Otsubo \cite{Otsubo} and Otsubo-Senoue \cite{Otsubo2}. 
Some of the formulas shown have analogous formulas over $\C$, while others do not.
For example, we give the following two formulas (Theorems \ref{FA = FC} and \ref{F3 and 3F2, 2F1}) under some conditions for parameters.\\
(i) When $\mathrm{char}(\k) \neq 2$ and $1+\sum_{i=1}^n \l_i \neq 0$,
\begin{align*} 
&\FC{n}{\a\, ;\, \a\phi}{\b_1\phi, \dots, \b_n\phi}{\l_1^2, \dots, \l_n^2}\\
&= \ol{\a}^2 \Big( 1 + \sum_{i=1}^n \l_i \Big) \FA{n}{\a^2\, ;\, \b_1 , \dots, \b_n}{\b_1^2, \dots, \b_n^2}{\dfrac{2\l_1}{1+\sum_{i=1}^n \l_i},\dots, \dfrac{2\l_n}{1+\sum_{i=1}^n \l_i}}.
\end{align*}
This formula is a finite field analogue of Bailey's formula (cf. \cite{Bailey})
\begin{align*}
& \FC{n}{a\, ;\, a + \frac{1}{2}}{b_1+\frac{1}{2}, \dots,b_n+\frac{1}{2}}{z_1^2,\dots,z_n^2}\\
& =\Big( 1+\sum_{i=1}^n z_i \Big)^{-2a} \FA{n}{2a\, ;\, b_1,\dots,b_n}{2b_1,\dots,2b_n}{\dfrac{2z_1}{1+\sum_{i=1}^n z_i},\dots,\dfrac{2z_n}{1+\sum_{i=1}^n z_i}}.
\end{align*} 
(ii) For $\l\neq1$, we have
\begin{align*}
& \FBB{\a_1,\a_2 \,;\, \b_1, \b_2}{\c}{\l, \dfrac{\l}{\l-1}}=\dfrac{g(\a_1\a_2\ol{\c}) g(\a_2\b_1\ol{\c}) g(\ol{\b_2})}{g(\ol{\c}) g^\circ (\a_1\a_2\b_1\ol{\c}) g^\circ(\a_2\ol{\b_2})}\\
& \hspace{70pt}\times \ol{\c}(\l) \a_2(1-\l) \hF{3}{2}{\a_1\a_2\ol{\c}, \a_2\b_1\ol{\c}, \ol{\b_2}}{\a_1\a_2\b_1\ol{\c}, \a_2\ol{\b_2}}{1-\l}.
\end{align*}
This formula is a relation between Appell's and one-variable hypergeometric functions over $\k$ whose corresponding functions over $\C$ satisfy the same ordinary differential equation of second order \cite{Vidunas}.
As a fact, those functions over $\C$ are linearly independent (it can be checked by numerical experiments) and hence, there is no analogous formula over $\C$.

This paper is constructed as follows.
In Section 2, we recall the definitions and some basic properties of hypergeometric functions over $\k$.
In Section 3, we prove formulas for Appell-Lauricella functions over $\k$.
More precisely, in Subsection 3.1, we prove the sum representations of Lauricella's functions, and in Subsection 3.2, we obtain some formulas for confluent hypergeometric functions which will be used after.
In Subsection 3.3, we prove reduction formulas, summation formulas and transformation formulas for Lauricella's $F_A$ and $F_C$, and in Subsections 3.4-3.6, we obtain reduction formulas and transformation formulas for Appell's $F_2, F_3$ and $F_4$.
In Subsection 3.7, we prove formulas between Appell's functions and ${}_4F_3$-functions, and we prove reduction formulas for Lauricella's $F_D$ in Subsection 3.8.

\section{Definitions and basic properties}
Throughout this paper, let $\k$ be a finite field with $q$ elements of characteristic $p$. 
Let $\khat={\rm Hom}(\k^\times,\ol{\Q}^\times)$ be the group of multiplicative characters of $\k$.
We write $\e, \phi \in\khat$ for the trivial character and the quadratic character, respectively, where the latter is only used when $p\neq2$.
For any $\eta\in\khat$, we set $\eta(0)=0$ and write $\ol{\eta}=\eta^{-1}$. 
Put, for $\eta\in\widehat{\k^\times}$,
\begin{equation*}
\d(\eta)=\begin{cases} 1&(\eta=\e),\\ 0&(\eta\neq\e).\end{cases}
\end{equation*}
We fix a non-trivial additive character $\psi\in{\rm Hom}(\k,\ol{\Q}^\times)$.

For $\eta,\eta_1,\dots,\eta_n  \in \widehat{\k^\times}$ ($n\geq 2$), {\it the Gauss sum} $g(\eta)$ and {\it the Jacobi sum} $j(\eta_1,\dots,\eta_n)$ are defined by 
\begin{align*}
g(\eta)&=-\sum_{x\in \k^\times} \psi(x)\eta(x)\ \in\Q(\mu_{p(q-1)}),\\
j(\eta_1,\dots,\eta_n)&=(-1)^{n-1}\sum_{\substack{x_i\in\k^\times\\ x_1+\dots+x_n=1}}\prod_{i=1}^n \eta_i(x_i)\ \in\Q(\mu_{q-1}).
\end{align*}
Note that $g(\e)=1$. 
Put $g^\circ(\eta)=q^{\d(\eta)}g(\eta)$. Then, one shows (cf. \cite[Proposition 2.2 (iii)]{Otsubo})
\begin{equation}\label{Gauss sum thm}
g(\eta)g^\circ(\ol{\eta})=\eta(-1)q.
\end{equation}
For $\eta_1,\dots,\eta_n\in\widehat{\k^\times}$, we have (cf. \cite[Proposition 2.2 (iv)]{Otsubo})
\begin{equation}\label{J=G}
j(\eta_1,\dots,\eta_n)=
\begin{cases}
\dfrac{1-(1-q)^n}{q}&(\eta_1=\cdots=\eta_n=\e),\vspace{5pt}\\
\dfrac{g(\eta_1)\cdots g(\eta_n)}{g^\circ(\eta_1\cdots\eta_n)}&({\rm otherwise}).
\end{cases}
\end{equation}
As an analogue of the Pochhammer symbol $(a)_n=\Gamma(a+n)/\Gamma(a)$, put 
\begin{align*}
(\a)_\nu=\dfrac{g(\a\nu)}{g(\a)},\ \ \ \ (\a)_\nu^\circ=\dfrac{g^\circ(\a\nu)}{g^\circ(\a)}
\end{align*}
for $\a,\ \nu\in\widehat{\k^\times}$. 
One shows 
\begin{align}\label{Poch formula}
(\a)_{\nu\mu}=(\a)_\nu(\a\nu)_\mu,\ \ \ (\a)_{\nu\mu}^\circ=(\a)_\nu^\circ(\a\nu)_\mu^\circ
\end{align}
and 
\begin{equation}
(\a)_\nu = \nu(-1) \dfrac{1}{(\ol{\a})_{\ol{\nu}}^\circ}. \label{inversion Poch}
\end{equation}
If $p \neq 2$, we will use the duplication formulas (cf. \cite[Theorem 3.10 and Corollary 3.11]{Otsubo})
\begin{equation}
g(\a^2) = \a(4) \dfrac{g(\a)g(\a\phi)}{g(\phi)},\quad g^\circ(\a^2) = \a(4) \dfrac{g(\a)^\circ g(\a\phi)}{g(\phi)}\label{dup. form. gauss}
\end{equation}
and
\begin{equation}
(\a^2)_{\nu^2} = \nu(4) (\a)_\nu (\a\phi)_\nu,\quad (\a^2)_{\nu^2}^\circ = \nu(4) (\a)_\nu^\circ (\a\phi)_\nu^\circ. \label{dup. form. Poch}
\end{equation}

For $\a_1,\dots,\a_m,\b_1,\dots,\b_n\in\widehat{\k^\times}$, the hypergeometric function over $\k$ is defined by  
\begin{equation*}
\hF{m}{n}{\a_1,\dots,\a_m}{\b_1,\dots,\b_n}{\lambda}=\dfrac{1}{1-q}\sum_{\nu\in\widehat{\k^\times}}\dfrac{(\a_1)_\nu\cdots(\a_m)_\nu}{(\e)_\nu^\circ(\b_1)_\nu^\circ\cdots(\b_n)_\nu^\circ}\nu(\lambda)\quad (\lambda\in \k).
\end{equation*}
By Otsubo \cite[Corollary 3.5 (i)]{Otsubo}, we have that if $\e \not\in\{\a_0, \ol{\a_1}\b_1, \ol{\a_2}\b_2\}$ and $\l \neq 0$, then
\begin{align}
& \Big( \prod_{i=1}^2 j(\a_i, \ol{\a_i}\b_i)\Big) \hF{3}{2}{\a_0, \a_1, \a_2}{\b_1, \b_2}{\l} \label{3F2 int.}\\
& = \sum_{s,t\in\k^\times} \ol{\a_0}( 1-\l st) \a_1(s) \ol{\a_1}\b_1(1-s) \a_2(t) \ol{\a_2}\b_2(1-t) .\nonumber
\end{align}
For $\l \in \k^\times$, one shows (cf. \cite[Corollary 3.4]{Otsubo}) that when $\a \neq \e$ or $\l \neq 1$, 
\begin{equation}
\hF{1}{0}{\a}{}{\l} = \ol{\a}(1-\l).\label{1F0 int.}
\end{equation}
In particular, $\hF{1}{0}{\a}{}{1} = 0$ when $\a\neq \e$.

For $\a,\a_1,\dots,\a_n,\b,\b_1,\dots,\b_n,\c,\c_1,\dots,\c_n\in\widehat{\k^\times}$, Lauricella's functions over $\k$ are defined as follows. For $\lambda_1,\dots,\lambda_n\in\k$,
\begin{align*}
&\FA{n}{\a\, ;\, \b_1,\dots,\b_n}{\c_1,\dots,\c_n}{\lambda_1,\dots,\lambda_n}\\
&:=\dfrac{1}{(1-q)^n}\sum_{\nu_i\in\widehat{\k^\times}}\dfrac{(\a)_{\nu_1\cdots\nu_n}(\b_1)_{\nu_1}\cdots(\b_n)_{\nu_n}}{(\c_1)_{\nu_1}^\circ\cdots(\c_n)_{\nu_n}^\circ(\e)_{\nu_1}^\circ\cdots(\e)_{\nu_n}^\circ}\prod_{i=1}^n \nu_i(\lambda_i),\\
&\FB{n}{\a_1,\dots,\a_n\, ;\, \b_1,\dots,\b_n}{\c}{\lambda_1,\dots,\lambda_n}\\
&:=\dfrac{1}{(1-q)^n}\sum_{\nu_i\in\widehat{\k^\times}}\dfrac{(\a_1)_{\nu_1}\cdots(\a_n)_{\nu_n}(\b_1)_{\nu_1}\cdots(\b_n)_{\nu_n}}{(\c)_{\nu_1\cdots\nu_n}^\circ(\e)_{\nu_1}^\circ\cdots(\e)_{\nu_n}^\circ}\prod_{i=1}^n \nu_i(\lambda_i),\\
&\FC{n}{\a\, ;\, \b}{\c_1,\dots,\c_n}{\lambda_1,\dots,\lambda_n}\\
&:=\dfrac{1}{(1-q)^n}\sum_{\nu_i\in\widehat{\k^\times}}\dfrac{(\a)_{\nu_1\cdots\nu_n}(\b)_{\nu_1\cdots\nu_n}}{(\c_1)_{\nu_1}^\circ\cdots(\c_n)_{\nu_n}^\circ(\e)_{\nu_1}^\circ\cdots(\e)_{\nu_n}^\circ}\prod_{i=1}^n \nu_i(\lambda_i),\\
&\FD{n}{\a\, ;\, \b_1,\dots,\b_n}{\c}{\lambda_1,\dots,\lambda_n}\\
&:=\dfrac{1}{(1-q)^n}\sum_{\nu_i\in\widehat{\k^\times}}\dfrac{(\a)_{\nu_1\cdots\nu_n}(\b_1)_{\nu_1}\cdots(\b_n)_{\nu_n}}{(\c)_{\nu_1\cdots\nu_n}^\circ(\e)_{\nu_1}^\circ\cdots(\e)_{\nu_n}^\circ}\prod_{i=1}^n \nu_i(\lambda_i).
\end{align*}
In particular, Appell's functions are the case when $n=2$ and we write $F_1, F_2, F_3$ and $F_4$ for $F_D^{(2)}, F_A^{(2)}, F_B^{(2)}$ and $F_C^{(2)}$, respectively.

\begin{rem}
A priori, the hypergeometric functions over $\k$ are $\Q(\mu_{p(q-1)})$-valued.
However, in fact, Lauricella's functions and ${}_{n+1}F_n$-functions over $\k$ take values in $\Q(\mu_{q-1})$, and they are independent of the choice of the additive character $\psi$ (see \cite[Lemma 2.4 (iii)]{Otsubo}).
\end{rem}

\begin{rem}\label{FA=FB} By (\ref{Poch formula}), (\ref{inversion Poch}) and (\ref{Gauss sum thm}), one shows for $\lambda_i\in\k^\times$,
\begin{align*}
& \FB{n}{\a_1,\dots,\a_n\, ;\, \b_1,\dots,\b_n}{\c}{\lambda_1,\dots,\lambda_n}\\
& = (\ol{\c})_{\b_1\cdots\b_n}\Big(\prod_{i=1}^n (\a_i)_{\ol{\b_i}}\ol{\b_i}(\lambda_i)\Big) \FA{n}{\b_1\cdots\b_n\ol{\c}\, ;\, \b_1,\dots,\b_n}{\ol{\a_1}\b_1,\dots,\ol{\a_n}\b_n}{\dfrac{1}{\lambda_1},\dots,\dfrac{1}{\lambda_n}}.
\end{align*}
\end{rem}

For a function $f:(\k^\times)^n\rightarrow \ol{\Q}$, its {\it Fourier transform} is a function on $(\widehat{\k^\times})^n$ defined by
\begin{equation*}
\widehat{f}(\nu_1,\dots,\nu_n)=\sum_{t_i\in\k^\times}f(t_1,\dots,t_n)\prod_{i=1}^n\ol{\nu_i}(t_i).
\end{equation*}
Then, 
\begin{equation}\label{Fourier trans.}
f(\lambda_1,\dots,\lambda_n)=\dfrac{1}{(q-1)^n}\sum_{\nu_i\in\widehat{\k^\times}}\widehat{f}(\nu_1,\dots,\nu_n)\prod_{i=1}^n \nu_i(\lambda_i).
\end{equation}

\section{Finite analogues of integral representations and their applications}

\subsection{Finite analogues of integral representations}

Over $\C$, Lauricella's $F_A^{(n)}$ and $F_C^{(n)}$ have integral representations (cf. \cite[p.285]{Srivastava-Karlsson} and \cite{Srivastava-Exton}) 
\begin{align*}
& \Gamma(a) \FA{n}{a\, ;\, b_1, \dots, b_n}{c_1,\dots,c_n}{z_1, \dots, z_n}
 = \int_0^\infty e^{-t} t^{a-1} \prod_{i=1}^n \hF{1}{1}{b_i}{c_i}{z_i t}\, dt
\end{align*}
and
$$\Gamma(2a) \FC{n}{a\, ;\, a+\frac{1}{2}}{b_1,\dots,b_n}{z_1,\dots, z_n} = \int_0^\infty e^{-t}t^{2a-1} \prod_{i=1}^n \hF{0}{1}{}{b_i}{\dfrac{z_i t^2}{4}}\, dt.$$
We obtain analogous formulas as the following.
\begin{thm}\label{FA and FC integral rep.}\,
\begin{enumerate}
\item We have, for $\l_i \in \k$,
\begin{align*}
 -g(\a) \FA{n}{\a\, ;\, \b_1,\dots,\b_n}{\c_1,\dots,\c_n}{\l_1,\dots,\l_n} = \sum_{t\in\k^\times} \psi(t) \a(t) \prod_{i=1}^n \hF{1}{1}{\b_i}{\c_i}{\l_i t}.
\end{align*}

\item For any $\l_i \in \k$,
\begin{align*}
& -\dfrac{q}{g^\circ(\c)} \FB{n}{\a_1,\dots,\a_n\, ;\, \b_1,\dots,\b_n}{\c}{\l_1,\dots,\l_n}\\
& = \sum_{t\in\k^\times} \psi(-t) \ol{\c}(t) \prod_{i=1}^n \hF{2}{0}{\a_i, \b_i}{}{\dfrac{\l_i}{t}}.
\end{align*}

\item Suppose that $p \neq 2$. 
For $\l_i \in \k$, we have
\begin{align*}
- g(\a^2) \FC{n}{\a\, ;\, \a\phi}{\b_1,\dots, \b_n}{\l_1, \dots, \l_n} = \sum_{t \in \k^\times} \psi(t) \a^2(t) \prod_{i=1}^n \hF{0}{1}{}{\b_i}{\dfrac{\l_i t^2}{4}}.
\end{align*}

\item For any $\l_i \in \k$,
\begin{align*}
& g(\a)g(\b) \FC{n}{\a\, ;\, \b}{\c_1,\dots,\c_n}{\l_1,\dots,\l_n} \\
& = \sum_{s, t \in \k^\times} \psi(s+t)\a(s)\b(t) \prod_{i=1}^n \hF{0}{1}{}{\c_i}{\l_i st}.
\end{align*}
\end{enumerate}
\end{thm}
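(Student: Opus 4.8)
The plan is to prove all four identities by the same mechanism: start from the right-hand side, expand each inner confluent hypergeometric factor into its defining character sum over $\khat$, separate the arguments $\l_i$ from the summation variables $s,t$ using multiplicativity of characters, and then carry out the sum over $s$ and/or $t$ to produce Gauss sums that reassemble into the Lauricella function on the left. Since every sum in sight is finite, interchanging orders of summation is automatic and carries no analytic cost.

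Concretely, for (i) I would write $\hF{1}{1}{\b_i}{\c_i}{\l_i t} = \frac{1}{1-q}\sum_{\nu_i}\frac{(\b_i)_{\nu_i}}{(\e)_{\nu_i}^\circ(\c_i)_{\nu_i}^\circ}\nu_i(\l_i t)$, use $\nu_i(\l_i t) = \nu_i(\l_i)\nu_i(t)$ (valid for every $\l_i\in\k$ because $t\in\k^\times$), and swap the sums. The inner sum $\sum_{t\in\k^\times}\psi(t)(\a\nu_1\cdots\nu_n)(t)$ equals $-g(\a\nu_1\cdots\nu_n)=-g(\a)(\a)_{\nu_1\cdots\nu_n}$ by the definition of the Gauss sum together with $(\a)_\nu=g(\a\nu)/g(\a)$; pulling out $-g(\a)$ leaves exactly the defining sum of $F_A^{(n)}$. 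Part (iv) is entirely analogous, now with a double sum: the factors of $\psi$ split, so $\big(\sum_s\psi(s)(\a\nu_1\cdots\nu_n)(s)\big)\big(\sum_t\psi(t)(\b\nu_1\cdots\nu_n)(t)\big)=g(\a\nu_1\cdots\nu_n)g(\b\nu_1\cdots\nu_n)=g(\a)g(\b)(\a)_{\nu_1\cdots\nu_n}(\b)_{\nu_1\cdots\nu_n}$, which reassembles $g(\a)g(\b)F_C^{(n)}$ (the two minus signs cancel, explaining the $+$ here against the $-$ in (i)).

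Parts (ii) and (iii) follow the same template but each needs one extra identity, and these are where the real care lies. For (ii), the argument $\l_i/t$ introduces $\ol{\nu_i}(t)$, so the sum over $t$ becomes $\sum_{t}\psi(-t)(\ol{\c}\,\ol{\nu_1\cdots\nu_n})(t)$; I would evaluate this, then invoke (\ref{Gauss sum thm}) to convert $g(\ol{\c\nu_1\cdots\nu_n})$ into $q/g^\circ(\c\nu_1\cdots\nu_n)$, checking that the $\eta(-1)$ factors produced by $\psi(-t)$ cancel, and finally use $(\c)^\circ_\nu=g^\circ(\c\nu)/g^\circ(\c)$ to recover $F_B^{(n)}$. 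For (iii), the argument $\l_i t^2/4$ produces $(\nu_1\cdots\nu_n)^2(t)$ together with $\ol{\nu_1\cdots\nu_n}(4)$, so the sum over $t$ yields $-g((\a\nu_1\cdots\nu_n)^2)$; here I would apply the duplication formula (\ref{dup. form. gauss}) to rewrite $g((\a\nu)^2)$ via $g(\a\nu)g(\a\nu\phi)$, track the powers of $4$ so they collapse to $\a(4)$ and cancel against the duplication of $g(\a^2)$, and thereby recognize $(\a)_\nu(\a\phi)_\nu$, giving $-g(\a^2)F_C^{(n)}$ with lower parameter $\a\phi$.

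The main obstacle I anticipate is not the overall strategy, which is uniform across the four parts, but the sign and normalization bookkeeping in (ii) and (iii): keeping the $\eta(-1)$ terms and the $g$/$g^\circ$ interplay via (\ref{Gauss sum thm}) straight in (ii), and making the factors of $4$ cancel correctly against (\ref{dup. form. gauss}) in (iii), which is also where $p\neq2$ is genuinely used. A minor but worthwhile point throughout is that the factorization $\nu_i(\l_i\,st)=\nu_i(\l_i)\nu_i(s)\nu_i(t)$ still holds when $\l_i=0$ under the convention $\eta(0)=0$ (both sides vanish), so all four formulas hold uniformly for every $\l_i\in\k$ and no separate treatment of the origin is required.
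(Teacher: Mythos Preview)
Your proposal is correct and follows essentially the same route as the paper: expand each confluent factor by its defining character sum, interchange finite sums, and evaluate the inner sum over $t$ (or over $s,t$) as a Gauss sum to recover the Pochhammer quotients defining the Lauricella function. The only cosmetic difference is in (iii), where the paper applies the Pochhammer duplication \eqref{dup. form. Poch} to $(\a^2)_{\nu_1^2\cdots\nu_n^2}$ directly rather than the Gauss-sum form \eqref{dup. form. gauss} to $g((\a\nu_1\cdots\nu_n)^2)$; these are equivalent, and your bookkeeping for (ii) via \eqref{Gauss sum thm} and for the $\l_i=0$ case is fine.
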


\begin{proof}
(i) The right-hand side of the formula is equal to
\begin{align*}
& \dfrac{1}{(1-q)^n} \sum_{\nu_1, \dots, \nu_n} \prod_{i} \dfrac{(\b_i)_{\nu_i}}{(\e)_{\nu_i}^\circ (\c_i)_{\nu_i}^\circ}\nu_i(\l_i) \sum_t \psi(t) \a\nu_1\cdots\nu_n(t)\\
& = -\dfrac{1}{(1-q)^n} \sum_{\nu_1, \dots, \nu_n} g(\a\nu_1\cdots\nu_n) \prod_{i} \dfrac{(\b_i)_{\nu_i}}{(\e)_{\nu_i}^\circ (\c_i)_{\nu_i}^\circ}\nu_i(\l_i) \\
& = -\dfrac{g(\a)}{(1-q)^n} \sum_{\nu_1, \dots, \nu_n} (\a)_{\nu_1\cdots\nu_n} \prod_{i} \dfrac{(\b_i)_{\nu_i}}{(\e)_{\nu_i}^\circ (\c_i)_{\nu_i}^\circ}\nu_i(\l_i).
\end{align*}
Thus, we obtain (i).
The proof of (ii) and (iv) follows similarly. 

(iii) Similarly to (i), the right-hand side of the formula is equal to
\begin{align*}
& -\dfrac{g(\a^2)}{(1-q)^n} \sum_{\nu_1, \dots,\nu_n} (\a^2)_{\nu_1^2\cdots\nu_n^2} \prod_i \dfrac{1}{(\e)_{\nu_i}^\circ (\b_i)_{\nu_i}^\circ} \nu_i\Big(\dfrac{\l_i}{4}\Big).
\end{align*}
By \eqref{dup. form. Poch}, we have
$$(\a^2)_{\nu_1^2\cdots\nu_n^2} = \nu_1\cdots\nu_n(4) (\a)_{\nu_1\cdots\nu_n}(\a\phi)_{\nu_1\cdots\nu_n},$$
and hence we obtain (iii).
\end{proof}

%

\begin{rem}\label{rem1}
Also, Lauricella's $F_D$ over $\C$ has the similar integral representation (cf. \cite[Theorem 3.4.1]{Matsumoto})
\begin{align*}
& B(a, c-a) \FD{n}{a\, ;\, b_1, \dots, b_n}{c}{z_1,\dots,z_n} \\
& = \int_0^1 t^{a-1} (1-t)^{c-a-1} \prod_i (1-z_i t)^{-b_i} \, dt\\
& = \int_0^1 t^{a-1} (1-t)^{c-a-1} \prod_i \hF{1}{0}{b_i}{}{z_i t} \, dt.
\end{align*}
Here, note that $\hF{1}{0}{a}{}{z} = (1-z)^{-a}$.
A finite field analogue of this integral representation is obtained by the author \cite[Theorem 3.1 (i)]{N}.
\end{rem}

\begin{rem}
Lauricella's functions over $\C$ have another type of integral representations of Euler type (cf \cite[Theorem 3.4.1]{Matsumoto}).
The author \cite{N} obtains their finite field analogues, and expresses the numbers of rational points on certain algebraic varieties over a finite field in terms of Lauricella's functions over the finite field.
\end{rem}

\subsection{Formulas for confluent hypergeometric functions over $\k$}
Due to Otsubo \cite[Proposition 2.9 (ii)]{Otsubo}, we have
\begin{equation}\label{0F0=psi}
\hF{0}{0}{}{}{\l} = \psi(-\l)\quad (\l \in \k^\times).
\end{equation}
When $\a_i \neq \b_i$ ($i = 1, \dots, n$) and $\l \neq 0$, we also have (\cite[Theorem 3.3]{Otsubo})
\begin{align}
& (-1)^n\Big( \prod_{i=1}^n j(\a_i, \ol{\a_i}\b_i) \Big) \hF{n}{n}{\a_1,\dots,\a_n}{\b_1,\dots,\b_n}{\l} \label{1F1 int.}\\
& = \sum_{u_i \in \k} \psi(-\l u_1 \cdots u_n) \prod_{i=1}^n \a_i(u_i) \ol{\a_i}\b_i(1-u_i). \nonumber
\end{align}
Recall the following formula due to Erd\'elyi \cite[below (18)]{Erdelyi}:
\begin{align*}
& \sum_{n=0}^\infty \dfrac{(a_1)_n (a_2)_n}{(1)_n(b_1)_n (b_2)_n} \hF{1}{1}{a_1+n}{b_1+n}{-z} \hF{1}{1}{a_2+n}{b_2+n}{-z}z^n\\
& = e^{-z} \hF{2}{2}{b_1-a_1, b_2-a_2}{b_1, b_2}{z}.
\end{align*}
An analogue of this formula is the following.
\begin{thm}
Suppose that $\a_i \not\in\{\e,\b_i\}$ $(i=1, 2)$.
Then, for $\l\in\k$,
\begin{align*}
& \dfrac{1}{1-q} \sum_{\nu\in\khat} \dfrac{(\a_1)_\nu (\a_2)_\nu}{(\e)_\nu^\circ(\b_1)_\nu^\circ (\b_2)_\nu^\circ}\hF{1}{1}{\a_1\nu}{\b_1\nu}{-\l}\hF{1}{1}{\a_2\nu}{\b_2\nu}{-\l}\nu(\l)\\
& = \psi(\l) \hF{2}{2}{\ol{\a_1}\b_1, \ol{\a_2}\b_2}{\b_1, \b_2}{\l}.
\end{align*}
\end{thm}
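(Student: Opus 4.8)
The plan is to reduce both sides of the identity to the same double character sum over $(\k^\times)^2$. First, if $\l=0$ then every summand on both sides carries a factor $\nu(0)=0$, so both sides vanish; hence I would assume $\l\neq0$. Under this assumption I claim that both sides equal $\tfrac{C}{q}\,S$, where
\begin{equation*}
S:=\sum_{v_1,v_2\in\k^\times}\psi\big(\l(v_1+v_2-v_1v_2)\big)\,\a_1(v_1)\a_2(v_2)\,\ol{\a_1}\b_1(1-v_1)\,\ol{\a_2}\b_2(1-v_2)
\end{equation*}
and $C:=\frac{q\,g^\circ(\b_1)g^\circ(\b_2)}{g(\a_1)g(\a_2)g(\ol{\a_1}\b_1)g(\ol{\a_2}\b_2)}$ is independent of $\nu$.

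For the left-hand side, I would apply the case $n=1$ of \eqref{1F1 int.} to each factor $\hF{1}{1}{\a_i\nu}{\b_i\nu}{-\l}$; this is valid because $\a_i\neq\b_i$ gives $\a_i\nu\neq\b_i\nu$, because $-\l\neq0$, and because the second Jacobi argument $\ol{\a_i\nu}\b_i\nu=\ol{\a_i}\b_i$ does not depend on $\nu$. Rewriting every $(\a_i)_\nu,(\b_i)_\nu^\circ,(\e)_\nu^\circ$ and every $j(\a_i\nu,\ol{\a_i}\b_i)$ through Gauss sums by means of \eqref{J=G}, I expect the $\nu$-dependent factors $g(\a_i\nu)$ and $g^\circ(\b_i\nu)$ to cancel completely, so that the product of the Pochhammer ratio with the two Jacobi factors reduces to $C/g^\circ(\nu)$. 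After interchanging the order of summation, the factors $\a_i(v_i)$ force $v_i\in\k^\times$, whence $\l v_1v_2\neq0$, and, combining $\nu(\l)$ with the characters $\nu(v_i)$ coming from $\a_i\nu(v_i)$, the inner sum over $\nu$ evaluates to
\begin{equation*}
\sum_{\nu\in\khat}\frac{\nu(\l v_1v_2)}{g^\circ(\nu)}=-\frac{q-1}{q}\,\psi(-\l v_1v_2).
\end{equation*}

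I would establish this last evaluation by using \eqref{Gauss sum thm} to write $1/g^\circ(\nu)=\nu(-1)g(\ol{\nu})/q$, expanding $g(\ol{\nu})$ by its definition, and invoking orthogonality of characters. Collecting constants through $\tfrac{1}{1-q}\cdot\big(-\tfrac{q-1}{q}\big)=\tfrac1q$ and merging the three additive characters into $\psi\big(\l(v_1+v_2-v_1v_2)\big)$, the left-hand side becomes $\tfrac{C}{q}\,S$. For the right-hand side I would instead apply the case $n=2$ of \eqref{1F1 int.} directly to $\hF{2}{2}{\ol{\a_1}\b_1,\ol{\a_2}\b_2}{\b_1,\b_2}{\l}$, which is valid since $\a_i\neq\e$ yields $\ol{\a_i}\b_i\neq\b_i$ and $\ol{\ol{\a_i}\b_i}\,\b_i=\a_i$. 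This writes it as $\sum_{u_1,u_2}\psi(-\l u_1u_2)\prod_i\ol{\a_i}\b_i(u_i)\a_i(1-u_i)$ divided by $j(\ol{\a_1}\b_1,\a_1)j(\ol{\a_2}\b_2,\a_2)$, a Jacobi product that \eqref{J=G} identifies with $q/C$. The substitution $u_i\mapsto1-v_i$ together with the prefactor $\psi(\l)$ converts $\psi(\l)\psi(-\l u_1u_2)$ into $\psi\big(\l(v_1+v_2-v_1v_2)\big)$, so that the right-hand side is also $\tfrac{C}{q}\,S$, finishing the proof.

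The hard part will be the cancellation in the middle step: verifying that all $\nu$-dependent Gauss factors disappear so that the coefficient collapses to the single term $C/g^\circ(\nu)$, and then obtaining the closed form for $\sum_{\nu}\nu(w)/g^\circ(\nu)$ for $w\neq0$. This is precisely what makes the sum-representation strategy work, and it is where sign errors and confusions between the $g$ and $g^\circ$ normalizations are most likely to occur.
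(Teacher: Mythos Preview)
Your proposal is correct and follows essentially the same route as the paper's proof. The only cosmetic differences are that the paper packages your constant $C/q$ as $j(\a_1,\ol{\a_1}\b_1)^{-1}j(\a_2,\ol{\a_2}\b_2)^{-1}$ and recognizes your inner sum $\tfrac{1}{1-q}\sum_\nu \nu(\l v_1v_2)/g^\circ(\nu)$ directly as $\tfrac{1}{q}\hF{0}{0}{}{}{\l v_1v_2}=\tfrac{1}{q}\psi(-\l v_1v_2)$ via \eqref{0F0=psi}, rather than re-deriving it from \eqref{Gauss sum thm}; the substitution $u_i\mapsto 1-v_i$ and the use of \eqref{1F1 int.} for $n=1$ and $n=2$ are identical.
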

\begin{proof}
When $\l = 0$, the theorem is clear.
Hence, we may assume that $\l \neq 0$.
By \eqref{1F1 int.} and \eqref{J=G}, we have, for each $\nu$,
\begin{align*}
& \dfrac{(\a_1)_\nu (\a_2)_\nu}{(\b_1)_\nu^\circ (\b_2)_\nu^\circ}\hF{1}{1}{\a_1\nu}{\b_1\nu}{-\l}\hF{1}{1}{\a_2\nu}{\b_2\nu}{-\l}\\
& = j(\a_1, \ol{\a_1}\b_1)^{-1}j(\a_2, \ol{\a_2}\b_2)^{-1} \\
&\quad \times \sum_{u, v} \psi(u\l+v\l) \a_1(u)\ol{\a_1}\b_1(1-u) \a_2(v)\ol{\a_2}\b_2(1-v) \nu(uv).
\end{align*}
Thus, the left-hand side of the theorem is equal to
\begin{align*}
& j(\a_1, \ol{\a_1}\b_1)^{-1}j(\a_2, \ol{\a_2}\b_2)^{-1} \\
& \quad \times \sum_{u, v} \psi(u\l+v\l) \a_1(u)\ol{\a_1}\b_1(1-u) \a_2(v)\ol{\a_2}\b_2(1-v) \hF{0}{0}{}{}{uv\l}\\
& = j(\a_1, \ol{\a_1}\b_1)^{-1}j(\a_2, \ol{\a_2}\b_2)^{-1} \\
& \quad \times \sum_{u, v} \psi(u\l+v\l-uv\l) \a_1(u)\ol{\a_1}\b_1(1-u) \a_2(v)\ol{\a_2}\b_2(1-v).
\end{align*}
Here, we used \eqref{0F0=psi}. 
Noting that $\psi(u\l +v\l -uv\l) = \psi(\l)\psi(-\l(1-u)(1-v))$ and putting $z = 1-u$ and $w = 1-v$, the right-hand side above is equal to
\begin{align*}
& \psi(\l) j(\a_1, \ol{\a_1}\b_1)^{-1}j(\a_2, \ol{\a_2}\b_2)^{-1} \\
& \quad \times \sum_{z,w} \psi(-\l zw) \ol{\a_1}\b_1(z)\a_1(1-z) \ol{\a_2}\b_2(w)\a_2(1-w).
\end{align*}
Thus, noting the assumption $\a_i \neq \e$, we obtain the theorem by \eqref{1F1 int.}.
\end{proof}

We will use the following formulas in the next subsections.
\begin{prop}\label{lem 1F1}
Suppose that $\nu \neq \e$. Then,
\begin{align*}
& \dfrac{1}{1-q} \sum_{\eta\in\khat} \hF{1}{1}{\ol{\eta}}{\e}{x} \hF{1}{1}{\ol{\nu}\eta}{\e}{y} \\
& = \hF{1}{1}{\ol{\nu}}{\e}{x+y} - \psi(-y) \hF{1}{1}{\ol{\nu}}{\e}{x} - \psi(-x) \hF{1}{1}{\ol{\nu}}{\e}{y}.
\end{align*}
\end{prop}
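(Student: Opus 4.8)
The plan is to substitute the sum representation \eqref{1F1 int.} of the confluent function ${}_1F_1$ into the left-hand side and to collapse the resulting double character sum by orthogonality. First I would dispose of the degenerate arguments. Since $\eta(0)=0$ for every $\eta\in\khat$, one has $\hF{1}{1}{\a}{\b}{0}=0$; hence if $x=0$ (say), every summand on the left carries the factor $\hF{1}{1}{\ol\eta}{\e}{0}=0$, while on the right $\hF{1}{1}{\ol\nu}{\e}{x+y}=\hF{1}{1}{\ol\nu}{\e}{y}$ is cancelled by $-\psi(-x)\hF{1}{1}{\ol\nu}{\e}{y}=-\hF{1}{1}{\ol\nu}{\e}{y}$ (as $\psi(0)=1$) and the middle term vanishes. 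So both sides are $0$, and I may assume $x,y\neq0$.

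For $\eta\notin\{\e,\nu\}$ the two factors $\hF{1}{1}{\ol\eta}{\e}{x}$ and $\hF{1}{1}{\ol{\nu}\eta}{\e}{y}$ have distinct upper and lower parameters, so \eqref{1F1 int.} applies to each. Using \eqref{J=G} and \eqref{Gauss sum thm} one finds $j(\ol\eta,\eta)=\eta(-1)$, whence
\[
\hF{1}{1}{\ol\eta}{\e}{x}=-\eta(-1)\sum_{u}\psi(-xu)\,\ol\eta(u)\,\eta(1-u),
\]
and likewise $\hF{1}{1}{\ol{\nu}\eta}{\e}{y}=-\nu(-1)\eta(-1)\sum_{v}\psi(-yv)\,(\ol{\nu}\eta)(v)\,(\nu\ol\eta)(1-v)$. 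It is convenient to sum the product of these two expressions over all $\eta\in\khat$ (the \emph{main sum}) and to record separately the corrections at $\eta=\e$ and $\eta=\nu$, where instead a degenerate factor $\hF{1}{1}{\e}{\e}{\cdot}$ occurs. In the main sum the $\eta$-dependence of the product is $\eta\big((1-u)v/(u(1-v))\big)$ and the overall sign is $\nu(-1)$ (only $u,v\notin\{0,1\}$ contribute, for which the argument lies in $\k^\times$); applying the orthogonality relation $\sum_{\eta}\eta(w)=q-1$ for $w=1$ and $0$ for $w\in\k^\times\setminus\{1\}$ collapses the sum to the diagonal $u=v$. What remains is $(q-1)\sum_u\psi(-(x+y)u)\,\ol\nu(u)\,\nu(1-u)$, which by \eqref{1F1 int.} applied with parameter $\ol\nu$ equals $-(q-1)\nu(-1)\hF{1}{1}{\ol\nu}{\e}{x+y}$; carrying the prefactor $\nu(-1)/(1-q)$ turns the main sum into exactly $\hF{1}{1}{\ol\nu}{\e}{x+y}$.

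For the corrections I would first establish the auxiliary evaluation $\hF{1}{1}{\e}{\e}{z}=1+q\psi(-z)$ for $z\neq0$. This follows from \eqref{0F0=psi}: the $\nu$-coefficient of $\hF{1}{1}{\e}{\e}{z}$ is $(\e)_\nu/((\e)_\nu^\circ)^2$ and that of $\hF{0}{0}{}{}{z}$ is $1/(\e)_\nu^\circ$, and their ratio $(\e)_\nu/(\e)_\nu^\circ$ equals $q$ for $\nu\neq\e$ and $1$ for $\nu=\e$; comparing the two series gives the claim. Now at $\eta=\e$ the first factor is $\hF{1}{1}{\e}{\e}{x}$: its value $1+q\psi(-x)$ splits as the ``generic'' value $1+\psi(-x)$ (which is what \eqref{1F1 int.} would formally return, and which is therefore already accounted for in the main sum) plus an excess $(q-1)\psi(-x)$. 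Multiplying this excess by the genuine second factor $\hF{1}{1}{\ol\nu}{\e}{y}$ and by $1/(1-q)$ produces $-\psi(-x)\hF{1}{1}{\ol\nu}{\e}{y}$; symmetrically, the excess at $\eta=\nu$ produces $-\psi(-y)\hF{1}{1}{\ol\nu}{\e}{x}$. The product of the two excesses contributes nothing, as $\e\neq\nu$. Adding the main sum and the two corrections yields the right-hand side.

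The main obstacle is precisely the bookkeeping at the two exceptional characters $\eta\in\{\e,\nu\}$, where \eqref{1F1 int.} fails because an upper parameter collides with a lower one; these are exactly the source of the two $\psi$-weighted correction terms, so the whole identity can be read as an accounting of this failure together with the auxiliary value of $\hF{1}{1}{\e}{\e}{\cdot}$. A second delicate point is the locus $x+y=0$: there the diagonal sum $\sum_u\psi(-(x+y)u)\,\ol\nu(u)\,\nu(1-u)$ is no longer a value of ${}_1F_1$ but reduces via \eqref{J=G} to the Jacobi sum $j(\ol\nu,\nu)=\nu(-1)$, so this case has to be examined on its own.
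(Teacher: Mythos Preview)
Your approach is essentially the paper's: expand each ${}_1F_1$ via the one–variable integral representation \eqref{1F1 int.}, then collapse the $\eta$–sum by orthogonality onto the diagonal $u=v$. The only organizational difference is that the paper packages the generic and degenerate cases into a single formula
\[
\hF{1}{1}{\a}{\e}{\l}=-\a(-1)\sum_{u\neq0,1}\psi(-\l u)\,\a\!\left(\tfrac{u}{1-u}\right)+\d(\a)(q-1)\psi(-\l),
\]
valid for all $\a$, whereas you split into a ``main sum'' plus corrections at $\eta\in\{\e,\nu\}$ using the auxiliary value $\hF{1}{1}{\e}{\e}{z}=1+q\psi(-z)$. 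The arithmetic is identical, and your bookkeeping of the excesses is correct.

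Your flag on $x+y=0$ is more than a delicate point: the identity as stated actually \emph{fails} there. When $x+y=0$ your main sum equals $-\nu(-1)\sum_u\ol\nu(u)\nu(1-u)=\nu(-1)j(\ol\nu,\nu)=1$, while $\hF{1}{1}{\ol\nu}{\e}{0}=0$, so the two sides differ by $1$. Concretely, for $q=3$, $\nu=\phi$, $x=1$, $y=-1$ one computes $\hF{1}{1}{\e}{\e}{1}=1+3\omega^2$, $\hF{1}{1}{\e}{\e}{2}=1+3\omega$, $\hF{1}{1}{\phi}{\e}{\l}=\psi(\l)$, giving left side $=2$ and right side $=1$. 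The paper's proof shares the same oversight: its displayed formula with the $\d(\a)$-term is only derived (via \eqref{1F1 int.}) for $\l\neq0$, and for $\a\neq\e$ it gives the value $1$ at $\l=0$ rather than $0$. So the case $x+y=0$ is not merely ``to be examined on its own'' --- the statement needs the hypothesis $x+y\neq0$ (which is harmless for the application to Proposition~\ref{red. of FA} once one tracks it through).
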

\begin{proof}
When $x=0$ or $y=0$, the proposition is clear because the both sides become 0, and hence we may assume that $x, y\neq 0$.
One shows 
\begin{equation}
\hF{1}{1}{\a}{\e}{\l} = - \a(-1)\sum_{u \neq 0,1} \psi(-\l u) \a\Big(\dfrac{u}{1-u}\Big) + \d(\a) (q-1)\psi(- \l). \label{1F1 int. ana.}
\end{equation}
Indeed, it is derived by \eqref{1F1 int.} when $\a \neq \e$, and the both sides are equal to $1+q\psi(-\l)$ when $\a=\e$ by \eqref{0F0=psi} and $-\sum_{u \neq 0,1} \psi(-\l u) = \psi(0) + \psi(-\l) = 1 + \psi(-\l)$.

By \eqref{1F1 int. ana.}, 
\begin{align*}
& \dfrac{1}{1-q}\sum_\eta \hF{1}{1}{\ol\eta}{\e}{x} \hF{1}{1}{\ol\nu \eta}{\e}{y}\\
& = \dfrac{\nu(-1)}{1-q}\sum_{u,v \neq 1} \psi(-xu -yv) \ol{\nu}\Big( \dfrac{v}{1-v}\Big) \sum_\eta \eta\Big(\dfrac{v(1-u)}{u(1-v)}\Big)\\
& \quad + \nu(-1)\psi(-y) \sum_{u\neq 1} \psi(-xu)\ol\nu \Big( \dfrac{u}{1-u}\Big) + \nu(-1)\psi(-x) \sum_{v\neq 1} \psi(-yv)\ol\nu\Big(\dfrac{v}{1-v}\Big).
\end{align*}
Noting that, by the orthogonality of characters,
$$ \sum_\eta \eta\Big(\dfrac{v(1-u)}{u(1-v)}\Big) = 
\begin{cases}
0 & (u \neq v),\\
q-1 & (u=v),
\end{cases}
$$
the first term of the right-hand side above is equal to
\begin{align*}
-\nu(-1)\sum_{u\neq 1} \psi( -(x+y)u) \ol\nu\Big( \dfrac{u}{1-u}\Big).
\end{align*}
Thus, we obtain the proposition by \eqref{1F1 int. ana.}.
\end{proof}
The following is a finite analogue of Kummer's product formula.
\begin{prop}[{\cite[Theorem 6.1 (ii)]{Otsubo}}]\label{Kummer second}
Suppose that $p \neq 2$.
If $\a \neq \e$, then for $\l \in\k$,
$$ \psi(\l) \hF{1}{1}{\a}{\a^2}{2\l} = \hF{0}{1}{}{\a\phi}{\dfrac{\l^2}{4}}.$$
\end{prop}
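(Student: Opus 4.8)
The plan is to reduce both sides to explicit character sums and then match the resulting constants. Since the defining series of $\hF{1}{1}{\a}{\a^2}{\cdot}$ and $\hF{0}{1}{}{\a\phi}{\cdot}$ both vanish at $0$ (every term carries a factor $\nu(\l)$ with $\nu(0)=0$), the case $\l=0$ is trivial, so I may assume $\l\neq0$; as $p\neq2$ this gives $2\l\neq0$ and $\l^2/4\neq0$. The hypothesis $\a\neq\e$ gives $\a\neq\a^2$, so the $n=1$ case of \eqref{1F1 int.} applies with upper parameter $\a$ and lower parameter $\a^2$ (note $\ol{\a}\cdot\a^2=\a$), giving $-j(\a,\a)\hF{1}{1}{\a}{\a^2}{2\l}=\sum_{u}\psi(-2\l u)\a(u(1-u))$. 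Multiplying by $\psi(\l)$, using $\psi(\l)\psi(-2\l u)=\psi(\l(1-2u))$ together with the elementary identity $u(1-u)=\tfrac14(1-(1-2u)^2)$ (this is where $p\neq2$ enters), and substituting $s=1-2u$ (a bijection of $\k$), I would obtain
\begin{equation*}
-j(\a,\a)\,\psi(\l)\,\hF{1}{1}{\a}{\a^2}{2\l}=\ol{\a}(4)\sum_{s\in\k}\psi(\l s)\,\a(1-s^2).
\end{equation*}

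Next I would convert this quadratic character sum into a Kloosterman-type sum. Expanding $\a(1-s^2)=-g(\ol{\a})^{-1}\sum_{r}\psi((1-s^2)r)\ol{\a}(r)$ (which follows for all arguments from the definition of $g$, since $\a\neq\e$), interchanging summation, and evaluating the inner quadratic Gauss sum $\sum_{s}\psi(\l s-rs^2)=\psi(\tfrac{\l^2}{4r})(-\phi(-r)g(\phi))$ by completing the square (the $r=0$ term drops out as $\ol{\a}(0)=0$), I would arrive at
\begin{equation*}
\sum_{s\in\k}\psi(\l s)\,\a(1-s^2)=\frac{\phi(-1)g(\phi)}{g(\ol{\a})}\sum_{r\in\k^\times}\ol{\a\phi}(r)\,\psi\!\Big(r+\frac{\l^2}{4r}\Big).
\end{equation*}

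Finally I would identify this Kloosterman sum with $\hF{0}{1}{}{\a\phi}{\l^2/4}$. Setting $K_\chi(z)=\sum_{r\in\k^\times}\chi(r)\psi(r+z/r)$, a direct computation gives $\widehat{K_\chi}(\nu)=g(\ol{\nu})g(\chi\ol{\nu})$, so by \eqref{Fourier trans.} one has $K_{\ol{\b}}(z)=\tfrac{1}{q-1}\sum_\nu g(\ol{\nu})g(\ol{\b\nu})\nu(z)$ for $z\neq0$; comparing this with the defining series of $\hF{0}{1}{}{\b}{z}$, rewritten through \eqref{Gauss sum thm}, shows $K_{\ol{\b}}(z)=-\b(-1)q\,g^\circ(\b)^{-1}\hF{0}{1}{}{\b}{z}$. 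Taking $\b=\a\phi$ and collecting all the constants, the desired identity reduces to the scalar equation
\begin{equation*}
\frac{\ol{\a}(4)\,\a(-1)\,q\,g(\phi)\,g(\a^2)}{g(\a)^2\,g(\ol{\a})\,g(\a\phi)}=1,
\end{equation*}
which I would verify using \eqref{Gauss sum thm} (in the form $g(\a)g(\ol{\a})=\a(-1)q$), the evaluation $j(\a,\a)=g(\a)^2/g(\a^2)$ from \eqref{J=G}, and the duplication formula \eqref{dup. form. gauss}.

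The step I expect to be the main obstacle is this last constant matching, together with the degenerate case $\a=\phi$. When $\a=\phi$ one has $\a^2=\a\phi=\e$, so $g^\circ\neq g$ for these characters and the $g^\circ$-version of \eqref{dup. form. gauss} breaks down; this case therefore has to be treated separately, where both sides collapse to $\hF{0}{1}{}{\e}{\l^2/4}$ and the untwisted Kloosterman sum, and the constants are seen to match directly.
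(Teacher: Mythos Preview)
The paper does not give its own proof of this proposition; it is quoted verbatim from \cite[Theorem 6.1 (ii)]{Otsubo} and used as a black box. So there is no ``paper's own proof'' to compare against.

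That said, your argument is essentially correct and self-contained. The chain
\[
\psi(\l)\hF{1}{1}{\a}{\a^2}{2\l}\;\longrightarrow\;\sum_{s}\psi(\l s)\a(1-s^2)\;\longrightarrow\;\text{twisted Kloosterman sum}\;\longrightarrow\;\hF{0}{1}{}{\a\phi}{\l^2/4}
\]
is valid, and each step (the integral representation \eqref{1F1 int.}, the quadratic Gauss sum evaluation, and the Fourier computation for $K_\chi$) checks out. One small correction: from \eqref{J=G} one gets $j(\a,\a)=g(\a)^2/g^\circ(\a^2)$, not $g(\a)^2/g(\a^2)$, and similarly the constant coming from the Kloosterman identification involves $g^\circ(\a\phi)$; both $\circ$'s are invisible unless $\a=\phi$, which is exactly the degenerate case you already isolate. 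In that case the scalar reduces to $\phi(-1)q/g(\phi)^2=1$, so the identity still holds. Your treatment of $\l=0$ is also fine since by convention $\nu(0)=0$ for every $\nu\in\khat$.
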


\subsection{Reduction, summation and transformation formulas for $F_A$ and $F_C$}

Over $\C$, Padmanabham-Srivastava \cite[(7)]{Padmanabham-Srivastava} show that
\begin{align*}
& \sum_{k=0}^n \FA{n+2}{a\, ;\, -k, -n+k, b_1, \dots, b_n}{1, 1, c_1,\dots, c_n}{x, y, z_1, \dots, z_n}\\
& = (n+1) \FA{n+1}{a\, ;\, -n, b_1, \dots, b_n}{2, c_1, \dots, c_n}{x+y, z_1, \dots, z_n},
\end{align*}
and 
$$ \sum_{k=0}^n \FAA{ 1\, ;\, -k, n-k }{ 1, 1}{\dfrac{1}{2}, \dfrac{1}{2}} = 1.$$
We obtain finite field analogues of them below.

\begin{prop}\label{red. of FA}
Suppose that $\nu\neq\e$. Then, for $x,y \in \k-\{1\}$ and $\l_1,\dots,\l_n\in\k$,
\begin{align*}
& \dfrac{1}{1-q} \sum_{\eta\in\khat} \FA{n+2}{\a\, ;\, \ol{\eta}, \ol{\nu}\eta, \b_1,\dots,\b_n}{\e,\e, \c_1,\dots,\c_n}{x,y,\l_1,\dots,\l_n}\\
& = \FA{n+1}{\a\, ;\, \ol{\nu},\b_1,\dots,\b_n}{\e,\c_1,\dots,\c_n}{x+y,\l_1,\dots,\l_n}\\
& \quad - \ol{\a}(1-x) \FA{n+1}{\a\, ;\, \ol{\nu}, \b_1,\dots,\b_n}{\e,\c_i,\dots,\c_n}{\dfrac{y}{1-x},\dfrac{\l_1}{1-x},\dots,\dfrac{\l_n}{1-x}}\\
& \quad - \ol{\a}(1-y) \FA{n+1}{\a\, ;\, \ol{\nu}, \b_1,\dots,\b_n}{\e,\c_i,\dots,\c_n}{\dfrac{x}{1-y},\dfrac{\l_1}{1-y},\dots,\dfrac{\l_n}{1-y}}.
\end{align*}
\end{prop}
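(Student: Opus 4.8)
The plan is to reduce everything to the sum representation of Theorem \ref{FA and FC integral rep.} (i) together with the two-variable identity for products of $\hF{1}{1}{}{}{}$'s in Proposition \ref{lem 1F1}. First I would apply Theorem \ref{FA and FC integral rep.} (i) to each summand on the left, writing
\[
-g(\a)\,\FA{n+2}{\a\,;\,\ol{\eta},\ol{\nu}\eta,\b_1,\dots,\b_n}{\e,\e,\c_1,\dots,\c_n}{x,y,\l_1,\dots,\l_n} = \sum_{t\in\k^\times}\psi(t)\a(t)\,\hF{1}{1}{\ol{\eta}}{\e}{xt}\,\hF{1}{1}{\ol{\nu}\eta}{\e}{yt}\prod_{i=1}^n \hF{1}{1}{\b_i}{\c_i}{\l_i t}.
\]
Multiplying by $\tfrac{1}{1-q}$, summing over $\eta\in\khat$, and interchanging the (finite) sums over $\eta$ and $t$, the $\eta$-dependent factor collapses to $\tfrac{1}{1-q}\sum_\eta \hF{1}{1}{\ol{\eta}}{\e}{xt}\hF{1}{1}{\ol{\nu}\eta}{\e}{yt}$, which is precisely the left-hand side of Proposition \ref{lem 1F1} evaluated at the pair $(xt,yt)$. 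Since $\nu\neq\e$, that proposition applies for every $t$.

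Next I would substitute the three-term right-hand side of Proposition \ref{lem 1F1} and treat the resulting three character sums separately. The first, carrying $\hF{1}{1}{\ol{\nu}}{\e}{(x+y)t}$, is recognized directly by Theorem \ref{FA and FC integral rep.} (i) as $-g(\a)\,\FA{n+1}{\a\,;\,\ol{\nu},\b_1,\dots,\b_n}{\e,\c_1,\dots,\c_n}{x+y,\l_1,\dots,\l_n}$. For the second term I would use $\psi(t)\psi(-yt)=\psi((1-y)t)$ and perform the change of variables $s=(1-y)t$, legitimate because $y\neq1$ (so $s$ runs over $\k^\times$ as $t$ does); then $\a(t)=\ol{\a}(1-y)\a(s)$, the argument $xt$ becomes $\tfrac{x}{1-y}s$ and each $\l_i t$ becomes $\tfrac{\l_i}{1-y}s$, so the sum becomes $\ol{\a}(1-y)$ times another instance of the sum representation, namely $g(\a)\ol{\a}(1-y)\,\FA{n+1}{\a\,;\,\ol{\nu},\b_1,\dots,\b_n}{\e,\c_1,\dots,\c_n}{\frac{x}{1-y},\frac{\l_1}{1-y},\dots,\frac{\l_n}{1-y}}$. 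The third term is handled by the symmetric substitution $s=(1-x)t$, using $x\neq1$. Collecting the three contributions and dividing through by $-g(\a)$, which is nonzero since Gauss sums never vanish, produces exactly the claimed identity.

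The computation is essentially bookkeeping once the two ingredients are in place; the only points requiring care are the interchange of the $\eta$- and $t$-sums (trivial as both are finite) and the two changes of variable, where the hypotheses $x,y\neq1$ are used precisely to make $(1-x)t$ and $(1-y)t$ range over $\k^\times$ and to extract the twisting factors $\ol{\a}(1-x)$ and $\ol{\a}(1-y)$. I expect no genuine obstacle: the main subtlety is tracking the signs and the $\a$-twist through each substitution so that, after dividing by $-g(\a)$, the two latter terms emerge with coefficients $-\ol{\a}(1-y)$ and $-\ol{\a}(1-x)$ as stated.
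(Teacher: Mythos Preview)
Your proposal is correct and follows essentially the same route as the paper: apply Theorem \ref{FA and FC integral rep.} (i), collapse the $\eta$-sum via Proposition \ref{lem 1F1}, then handle the three resulting terms by the substitutions $s=(1-x)t$ and $s=(1-y)t$ before invoking Theorem \ref{FA and FC integral rep.} (i) again. The only cosmetic difference is that you pair the substitution $s=(1-y)t$ with the $\psi(-yt)\hF{1}{1}{\ol{\nu}}{\e}{xt}$ term (and symmetrically for the other), whereas the paper records the two terms in the opposite order.
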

\begin{proof}
By Theorem \ref{FA and FC integral rep.} (i), we have
\begin{align*}
& \dfrac{-g(\a)}{1-q} \sum_{\eta\in\khat} \FA{n+2}{\a\, ;\, \ol{\eta}, \ol{\nu}\eta, \b_1,\dots,\b_n}{\e,\e, \c_1,\dots,\c_n}{x,y,\l_1,\dots,\l_n}\\
& = \dfrac{1}{1-q}  \sum_t \psi(t) \a(t) \sum_\eta \hF{1}{1}{\ol{\eta}}{\e}{x t} \hF{1}{1}{\ol{\nu}\eta}{\e}{y t} \prod_i \hF{1}{1}{\b_i}{\c_i}{\l_i t} .
\end{align*}
Using Proposition \ref{lem 1F1} and putting $s = t - xt$ or $s = t-yt$, the right-hand side above is equal to
\begin{align*}
& \sum_t \psi(t)\a(t) \hF{1}{1}{\ol{\nu}}{\e}{(x+y)t}\prod_i \hF{1}{1}{\b_i}{\c_i}{\l_i t}\\
& \quad - \sum_t \psi(t-xt)\a(t) \hF{1}{1}{\ol{\nu}}{\e}{yt}\prod_i \hF{1}{1}{\b_i}{\c_i}{\l_i t}\\
& \quad - \sum_t \psi(t-yt)\a(t) \hF{1}{1}{\ol{\nu}}{\e}{xt}\prod_i \hF{1}{1}{\b_i}{\c_i}{\l_i t}\\
& = \sum_t \psi(t)\a(t) \hF{1}{1}{\ol{\nu}}{\e}{(x+y)t}\prod_i \hF{1}{1}{\b_i}{\c_i}{\l_i t}\\
& \quad - \ol{\a}(1-x)\sum_s \psi(s)\a(s) \hF{1}{1}{\ol{\nu}}{\e}{\frac{ys}{1-x}}\prod_i \hF{1}{1}{\b_i}{\c_i}{\frac{\l_i s}{1-x}}\\
& \quad - \ol{\a}(1-y)\sum_s \psi(s)\a(s) \hF{1}{1}{\ol{\nu}}{\e}{\frac{xs}{1-y}}\prod_i \hF{1}{1}{\b_i}{\c_i}{\frac{\l_i s}{1-y}}.\\
\end{align*}
Thus, we obtain the proposition by Theorem \ref{FA and FC integral rep.} (i).
\end{proof}

By the proposition above with $n=0$, we have
\begin{align*}
& \dfrac{1}{1-q}\sum_\eta \FAA{\a\, ;\, \ol{\eta}, \ol{\nu}\eta}{\e,\e}{x,y}\\
& = \hF{2}{1}{\a,\ol{\nu}}{\e}{x+y} - \ol{\a}(1-x) \hF{2}{1}{\a,\ol{\nu}}{\e}{\dfrac{y}{1-x}} - \ol{\a}(1-y)\hF{2}{1}{\a,\ol{\nu}}{\e}{\dfrac{x}{1-y}}.
\end{align*}
Hereafter, we suppose that $p \neq 2$.
Taking $x=y=\tfrac{1}{2}$ and using the formula above and \eqref{Euler-Gauss}, we have
\begin{align*}
 \dfrac{1}{1-q}\sum_\eta \FAA{\a\, ;\, \ol{\eta}, \ol{\nu}\eta}{\e, \e}{\dfrac{1}{2},\dfrac{1}{2}}
& = (1-2\a(2)) \hF{2}{1}{\a,\ol{\nu}}{\e}{1}\\
& = (1-2\a(2)) \dfrac{q g(\ol{\a}\nu)}{g^\circ(\ol{\a}) g^\circ(\nu)}.
 \end{align*}
Thus, taking $\a = \e$, we obtain the following.
\begin{cor}
Suppose that $p \neq 2$ and $\nu \neq \e$. 
Then,
$$ \dfrac{1}{1-q}\sum_{\eta\in\khat} \FAA{\e\, ;\, \ol\eta, \ol\nu \eta}{\e, \e}{\dfrac{1}{2}, \dfrac{1}{2}} = -1.$$
\end{cor}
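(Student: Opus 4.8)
The plan is to obtain the corollary as the specialization $\a=\e$ of the identity displayed immediately before the statement, namely
\[
\dfrac{1}{1-q}\sum_{\eta\in\khat} \FAA{\a\, ;\, \ol{\eta}, \ol{\nu}\eta}{\e, \e}{\dfrac{1}{2},\dfrac{1}{2}} = \big(1-2\a(2)\big)\, \dfrac{q\, g(\ol{\a}\nu)}{g^\circ(\ol{\a})\, g^\circ(\nu)},
\]
which itself follows from Proposition~\ref{red. of FA} with $n=0$ together with the Euler--Gauss evaluation of $_2F_1$ at $1$. Thus the only remaining task is to compute the right-hand side when $\a=\e$, and this splits into evaluating the scalar factor and the Gauss-sum quotient separately.

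First I would treat the scalar factor. Since $p\neq2$, we have $2\in\k^\times$, so $\a(2)=\e(2)=1$, and therefore $1-2\a(2)=-1$. This is the one place where the hypothesis $p\neq2$ is genuinely used, as it is what guarantees $\e(2)=1$ rather than $\e(2)=0$.

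Next I would evaluate the Gauss-sum quotient. Setting $\a=\e$ gives $\ol{\a}=\e$, so $g(\ol{\a}\nu)=g(\nu)$ and $g^\circ(\ol{\a})=g^\circ(\e)$. By the definition $g^\circ(\eta)=q^{\d(\eta)}g(\eta)$ together with $\d(\e)=1$ and $g(\e)=1$, we get $g^\circ(\e)=q$; and since $\nu\neq\e$ we have $\d(\nu)=0$, hence $g^\circ(\nu)=g(\nu)$. As $\nu$ is nontrivial, $g(\nu)\neq0$ by \eqref{Gauss sum thm}, so the cancellation is legitimate and
\[
\dfrac{q\, g(\ol{\a}\nu)}{g^\circ(\ol{\a})\, g^\circ(\nu)} = \dfrac{q\, g(\nu)}{q\cdot g(\nu)} = 1.
\]
Multiplying the two factors yields $(-1)\cdot1=-1$, as claimed. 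There is essentially no obstacle in this argument: it is a direct specialization, and the only subtle points are the use of $p\neq2$ to force $\e(2)=1$ and the distinction between $g(\e)=1$ and $g^\circ(\e)=q$, which is precisely what makes the factor $q$ in the numerator cancel against $g^\circ(\e)$.
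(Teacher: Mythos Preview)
Your proposal is correct and follows exactly the paper's approach: the paper derives the identity with general $\a$ from Proposition~\ref{red. of FA} (with $n=0$) and the Euler--Gauss evaluation \eqref{Euler-Gauss}, then simply states ``taking $\a=\e$, we obtain the following,'' which is precisely the specialization you carry out in detail.
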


Recall the Bailey's formula (cf. \cite[(4.7) and (4.8)]{Bailey})
\begin{align*}
& \FC{n}{a\, ;\, a+\frac{1}{2}}{b_1+\frac{1}{2}, \dots,b_n+\frac{1}{2}}{z_1^2,\dots,z_n^2}\\
& =\Big( 1+\sum_{i=1}^n z_i \Big)^{-2a} \FA{n}{2a\, ;\, b_1,\dots,b_n}{2b_1,\dots,2b_n}{\dfrac{2z_1}{1+\sum_{i=1}^n z_i},\dots,\dfrac{2z_n}{1+\sum_{i=1}^n z_i}},
\end{align*} 
(Srivastava-Exton \cite{Srivastava-Exton} give an alternative proof).
The following is a finite field analogue of this formula.

\begin{thm}\label{FA = FC}
Suppose that $p \neq 2$ and $\b_i \neq \e$ for all $i = 1,\dots n$.
For $\l_i \in \k$ with $\sum_{i=1}^n \l_i \neq -1$, we have
\begin{align*}
&\FC{n}{\a \, ;\, \a\phi}{\b_1\phi, \dots, \b_n\phi}{\l_1^2, \dots, \l_n^2}\\
& = \ol{\a}^2 \Big( 1 + \sum_{i=1}^n \l_i \Big) \FA{n}{\a^2\, ;\, \b_1 , \dots, \b_n}{\b_1^2, \dots, \b_n^2}{\dfrac{2\l_1}{1+\sum_{i=1}^n \l_i},\dots, \dfrac{2\l_n}{1+\sum_{i=1}^n \l_i}}.
\end{align*}
\end{thm}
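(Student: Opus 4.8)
The plan is to prove the identity by expressing both sides through the sum representations of Theorem \ref{FA and FC integral rep.} and reducing the problem to a one-variable product identity via Proposition \ref{Kummer second}. Both $F_C^{(n)}$ (with the special parameter $\a\phi$) and $F_A^{(n)}$ have sum representations in terms of confluent hypergeometric functions, and Kummer's product formula is precisely the bridge that converts the ${}_0F_1$ factors appearing on the $F_C$ side into the ${}_1F_1$ factors appearing on the $F_A$ side.

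First I would apply Theorem \ref{FA and FC integral rep.} (iii) to the left-hand side. With the parameters $\a$ and $\a\phi$ on $F_C$, part (iii) gives
\begin{align*}
-g(\a^2)\FC{n}{\a\,;\,\a\phi}{\b_1\phi,\dots,\b_n\phi}{\l_1^2,\dots,\l_n^2}
= \sum_{t\in\k^\times}\psi(t)\a^2(t)\prod_{i=1}^n \hF{0}{1}{}{\b_i\phi}{\dfrac{\l_i^2 t^2}{4}}.
\end{align*}
Next I would invoke Proposition \ref{Kummer second}, which (since $\b_i\neq\e$) gives $\hF{0}{1}{}{\b_i\phi}{\l_i^2 t^2/4}=\psi(\l_i t)\hF{1}{1}{\b_i}{\b_i^2}{2\l_i t}$ after substituting $\a\mapsto\b_i$ and $\l\mapsto\l_i t$ in that proposition. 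Substituting, the sum becomes
\begin{align*}
\sum_{t\in\k^\times}\psi\Big(t\big(1+\textstyle\sum_{i=1}^n\l_i\big)\Big)\,\a^2(t)\prod_{i=1}^n\hF{1}{1}{\b_i}{\b_i^2}{2\l_i t},
\end{align*}
since $\prod_i \psi(\l_i t)=\psi((\sum_i\l_i)t)$ combines with the leading $\psi(t)$.

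The key step is then the change of variable $t\mapsto t/(1+\sum_i\l_i)$, which is legitimate precisely because $1+\sum_i\l_i\neq 0$. This absorbs the factor $1+\sum_i\l_i$ inside $\psi$, producing $\psi(t)$, while multiplying out $\a^2$ yields the factor $\ol{\a}^2(1+\sum_i\l_i)$, and it rescales each argument $2\l_i t$ to $2\l_i t/(1+\sum_i\l_i)$. The resulting sum is exactly
\begin{align*}
\ol{\a}^2\Big(1+\textstyle\sum_{i=1}^n\l_i\Big)\sum_{t\in\k^\times}\psi(t)\a^2(t)\prod_{i=1}^n\hF{1}{1}{\b_i}{\b_i^2}{\dfrac{2\l_i t}{1+\sum_{j=1}^n\l_j}},
\end{align*}
which by Theorem \ref{FA and FC integral rep.} (i), applied with $\a\mapsto\a^2$, $\c_i\mapsto\b_i^2$, equals $-g(\a^2)$ times the $F_A^{(n)}$ on the right-hand side of the claimed formula. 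Dividing through by $-g(\a^2)$ completes the proof.

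I expect the main obstacle to be bookkeeping rather than conceptual: one must check that the substitution $\a\mapsto\b_i$ in Proposition \ref{Kummer second} is valid (i.e.\ that $\b_i\neq\e$, which is the stated hypothesis) and that the argument rescaling interacts correctly with the $\psi$ and the multiplicative character $\a^2$. A minor subtlety is the case where some $\l_i=0$ or $t$-values make the ${}_0F_1$ arguments vanish; since all steps are equalities of character sums over $\k^\times$ that hold termwise, the degenerate cases cause no difficulty, but I would confirm that Proposition \ref{Kummer second} is stated for all $\l\in\k$ (it is), so the identity $\hF{0}{1}{}{\b_i\phi}{\l_i^2t^2/4}=\psi(\l_i t)\hF{1}{1}{\b_i}{\b_i^2}{2\l_i t}$ holds even when $\l_i t=0$.
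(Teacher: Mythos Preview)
Your proposal is correct and follows essentially the same approach as the paper: apply Theorem \ref{FA and FC integral rep.} (iii), convert each ${}_0F_1$ factor to a ${}_1F_1$ via Proposition \ref{Kummer second}, combine the additive characters, rescale by $1+\sum_i\l_i$, and recognize the resulting sum via Theorem \ref{FA and FC integral rep.} (i). The paper's proof differs only in presentation (it records the product identity $\prod_i {}_0F_1=\psi(\sum_i\l_i)\prod_i{}_1F_1$ first and writes the substitution as $s=t(1+\sum_i\l_i)$ rather than $t\mapsto t/(1+\sum_i\l_i)$), not in substance.
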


\begin{proof}
By Proposition \ref{Kummer second}, we have
$$ \prod_i \hF{0}{1}{}{\b_i\phi}{\dfrac{\l_i^2}{4}} = \psi\Big(\sum_i \l_i\Big) \prod_i \hF{1}{1}{\b_i}{\b_i^2}{2\l_i}.$$
By Theorem \ref{FA and FC integral rep.} (iii) and the equation above, we have
\begin{align*}
& -g(\a^2) \FC{n}{\a\, ;\, \a\phi}{\b_1\phi, \dots, \b_n\phi}{\l_1^2,\dots,\l_n^2}\\
& = \sum_t \psi(t) \a^2(t) \prod_i \hF{0}{1}{}{\b_i\phi}{\dfrac{\l_i^2 t^2}{4}}\\
& = \sum_t \psi \Big(t + \sum_i \l_i t \Big) \a^2(t) \prod_i \hF{1}{1}{\b_i}{\b_i^2}{2\l_i t}\\
& = \ol{\a}^2\Big(1 + \sum_i \l_i \Big) \sum_s \psi(s) \a^2(s) \prod_i \hF{1}{1}{\b_i}{\b_i^2}{\dfrac{2\l_i s}{1+\sum_i \l_i}}.
\end{align*}
Here, we putted $s = t(1+\sum_i \l_i)$.
We obtain the theorem by Theorem \ref{FA and FC integral rep.} (i).
\end{proof}

\begin{rem}
For the case when $n=2$ and $\b_i^2 \neq \e$, the formula in Theorem \ref{FA = FC} is essentially proved by Kalita-Azharuddin \cite[Theorem 1.4]{Kalita-Azharuddin}.
\end{rem}

\subsection{Reduction formulas for Appell's functions $F_2$ over $\k$}

Over $\C$, the following formulas are known (cf. \cite[(92) and (96)]{Vidunas}):
$$\hF{2}{1}{a, a+\frac{1}{2}}{\frac{1}{2}}{z^2} = \dfrac{(1+z)^{-2a} + (1 - z)^{-2a}}{2},$$
and
\begin{align*}
\dfrac{(a+\frac{1}{2})_l}{(\frac{1}{2})_l} \hF{2}{1}{a, a+l+\frac{1}{2}}{\frac{1}{2}-k}{z^2} 
= & \dfrac{(1+z)^{-2a}}{2} \FAA{2a\, ;\, -k, -l}{-2k, -2l}{\dfrac{2 z}{1+z}, \dfrac{2}{1+z}}\\
& + \dfrac{(1-z)^{-2a}}{2} \FAA{2a\, ;\, -k, -l}{-2k, -2l}{-\dfrac{2 z}{1-z}, \dfrac{2}{1-z}},
\end{align*}
where $k$ and $l$ are non-negative integers.
The following theorem is finite field analogues of these formulas. 
In fact, (i) of the following theorem is essentially given in \cite[Theorem 8.11]{FLRST}.
\begin{thm}Suppose that $p \neq 2$.\,
\begin{enumerate}
\item If $\a^2\neq \e$ or $\l \neq \pm 1$, then
$$\hF{2}{1}{\a, \a\phi}{\phi}{\l^2} = \dfrac{\ol{\a}^2(1+\l) + \ol{\a}^2(1-\l)}{2}\quad\quad (\l \neq 0).$$

\item Suppose that $\e \not \in \{\chi, \eta\}$. 
For $\l \neq -1$, 
\begin{align*}
&\sum_{\a' \in \{\a, \a\phi\}} \dfrac{(\eta\phi)_{\a'}}{(\phi)_{\a'}} \hF{2}{1}{\a' , \a'\eta\phi}{\ol{\chi}\phi}{\l^2} = \ol{\a}^2 (1+\l) \FAA{\a^2\, ;\, \ol{\chi}, \ol{\eta}}{\ol{\chi}^2, \ol{\eta}^2}{\dfrac{2\l}{1+\l}, \dfrac{2}{1+\l}}.
\end{align*}
\end{enumerate}
\end{thm}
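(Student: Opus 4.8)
The plan is to reduce the whole theorem, as in the proof of Theorem \ref{FA = FC}, to identities among confluent hypergeometric sums, using the sum representation of $F_2=F_A^{(2)}$ (Theorem \ref{FA and FC integral rep.} (i)), the finite Kummer formula (Proposition \ref{Kummer second}), and the duplication formulas \eqref{dup. form. Poch} to keep track of squares. For part (i), I would first apply \eqref{dup. form. Poch} in the form $(\a)_\nu(\a\phi)_\nu=\ol{\nu}(4)(\a^2)_{\nu^2}$ together with $(\e)_\nu^\circ(\phi)_\nu^\circ=\ol{\nu}(4)(\e)_{\nu^2}^\circ$ (the latter obtained by specializing the $\circ$-duplication formula at $\a=\phi$) to rewrite
\[
\hF{2}{1}{\a,\a\phi}{\phi}{\l^2}=\frac{1}{1-q}\sum_{\nu\in\khat}\frac{(\a^2)_{\nu^2}}{(\e)_{\nu^2}^\circ}\,\nu^2(\l),
\]
so that every character enters only through its square. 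Since $p\neq2$, the squaring map $\nu\mapsto\nu^2$ is two-to-one onto $(\khat)^2$, and the orthogonality relation $\sum_{\nu}h(\nu^2)=\sum_{\mu}(1+\mu(-1))h(\mu)$ (with $\tfrac12(1+\mu(-1))$ the indicator of squares) splits the sum into $\hF{1}{0}{\a^2}{}{\l}$ and $\hF{1}{0}{\a^2}{}{-\l}$. Evaluating both by \eqref{1F0 int.} — legitimate precisely when $\a^2\neq\e$ or $\l\neq\pm1$ — gives the right-hand side of (i) in terms of $\ol{\a}^2(1\pm\l)$.

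For part (ii), the substantive case, I would reduce both sides to the common multiplicative character sum
\[
\Sigma=\sum_{s\in\k^\times}\a^2(s)\,\hF{0}{1}{}{\ol{\chi}\phi}{\frac{\l^2 s^2}{4}}\hF{0}{1}{}{\ol{\eta}\phi}{\frac{s^2}{4}}.
\]
On the right, I apply Theorem \ref{FA and FC integral rep.} (i) with $n=2$ to $\FAA{\a^2\,;\,\ol{\chi},\ol{\eta}}{\ol{\chi}^2,\ol{\eta}^2}{\frac{2\l}{1+\l},\frac{2}{1+\l}}$ and then Proposition \ref{Kummer second} to each of the two factors $\hF{1}{1}{\ol{\chi}}{\ol{\chi}^2}{\cdot}$ and $\hF{1}{1}{\ol{\eta}}{\ol{\eta}^2}{\cdot}$. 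Exactly as in Theorem \ref{FA = FC}, the two additive factors $\psi(-\tfrac{\l t}{1+\l})$ and $\psi(-\tfrac{t}{1+\l})$ produced by Kummer's formula multiply to $\psi(-t)$, cancelling the $\psi(t)$ of the sum representation; the substitution $t=(1+\l)s$ then pulls out $\a^2(1+\l)$ and cleans the arguments, yielding $-g(\a^2)\cdot(\text{RHS})=\Sigma$ after using $\ol{\a}^2(1+\l)\a^2(1+\l)=1$ (valid since $\l\neq-1$).

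On the left I would expand $\Sigma$ through the definition of the two ${}_0F_1$'s and the two $\hF{2}{1}{\a',\a'\eta\phi}{\ol{\chi}\phi}{\l^2}$, then use the duplication formulas \eqref{dup. form. Poch} to combine the terms $\a'\in\{\a,\a\phi\}$: the weights $(\eta\phi)_{\a'}/(\phi)_{\a'}$ and the symmetrization over $\{\a,\a\phi\}$ are designed to reconstitute the squares $\a^2$ and $\nu^2$, collapsing the weighted sum of two ${}_2F_1$'s into a single sum matching $\Sigma$. I expect this left-hand collapse to be the main obstacle. Concretely, when $\Sigma$ is expanded the inner sum $\sum_{s}\a^2\rho_1^2\rho_2^2(s)$ forces the constraint $\a\rho_1\rho_2\in\{\e,\phi\}$, producing two families $\rho_1\rho_2=\ol{\a}$ and $\rho_1\rho_2=\ol{\a}\phi$; one must verify that these correspond precisely to the choices $\a'=\a$ and $\a'=\a\phi$, and that the duplication of $(\a')_\nu(\a'\eta\phi)_\nu$ against $(\ol{\chi}\phi)_\nu^\circ$ reproduces the $\rho$-Pochhammer factors, with the hypotheses $\chi,\eta\neq\e$ guaranteeing that the relevant $g^\circ$ and $(\cdot)^\circ$ factors are non-degenerate. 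Tracking which character equals which through this case split is where the care is needed.
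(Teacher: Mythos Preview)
Your proposal is correct and follows essentially the same route as the paper: part (i) is identical (duplication on numerator and denominator, split over squares, then \eqref{1F0 int.}), and for part (ii) both you and the paper reduce each side to the same sum $\Sigma=f(\l^2)=\sum_s\a^2(s)\,{}_0F_1(\,;\ol\chi\phi;\l^2s^2/4)\,{}_0F_1(\,;\ol\eta\phi;s^2/4)$, handling the right-hand side via Theorem \ref{FA and FC integral rep.} (i) plus Proposition \ref{Kummer second} and the substitution $t=(1+\l)s$, and the left-hand side by expanding the two ${}_0F_1$'s and using the constraint $\a\rho_1\rho_2\in\{\e,\phi\}$ from $\sum_s\a^2\rho_1^2\rho_2^2(s)$. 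The only cosmetic difference is that the paper packages your ``direct expansion'' of $\Sigma$ as a computation of the Fourier transform $\widehat f(\mu)$ followed by inversion \eqref{Fourier trans.}, and records the final simplification $g(\a^2)=\a(4)(\e)_{\a'}(\phi)_{\a'}$ as a separate identity \eqref{dup} to convert $(\e)_{\a'}(\eta\phi)_{\a'}$ into the weights $(\eta\phi)_{\a'}/(\phi)_{\a'}$.
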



\begin{proof}
(i) Note that
$$\sum_{\nu\in\khat} F(\nu^2)\nu^2(\l) = \dfrac{1}{2}\sum_{\nu\in\khat} (1+\nu(-1))F(\nu)\nu(\l) = \dfrac{1}{2}\Big( \sum_{\nu\in\khat} F(\nu)\nu(\l) + \sum_{\nu\in\khat} F(\nu)\nu(-\l) \Big),$$
for any $F:\khat \rightarrow \ol\Q^\times$ and $\l \in \k$.
By \eqref{dup. form. Poch},
\begin{align*}
\hF{2}{1}{\a, \a\phi}{\phi}{\l^2}
& = \dfrac{1}{1-q} \sum_\nu \dfrac{(\a)_\nu (\a\phi)_\nu}{(\e)_\nu^\circ (\phi)_\nu^\circ}\nu^2(\l)\\
& = \dfrac{1}{1-q} \sum_\nu \dfrac{(\a^2)_{\nu^2}}{(\e)_{\nu^2}^\circ}\nu^2(\l)\\
& = \dfrac{1}{2(1-q)} \Big( \sum_\nu \dfrac{(\a^2)_{\nu}}{(\e)_{\nu}^\circ}\nu(\l) + \sum_\nu \dfrac{(\a^2)_{\nu}}{(\e)_{\nu}^\circ}\nu(-\l) \Big)\\
& = \dfrac{1}{2}\Big( \hF{1}{0}{\a^2}{}{\l} + \hF{1}{0}{\a^2}{}{-\l} \Big).
\end{align*}
Thus, we obtain (i) by \eqref{1F0 int.}.

(ii) Put
$$ f(\l) := \sum_s \a^2(s) \hF{0}{1}{}{\ol{\chi}\phi}{\dfrac{\l s^2}{4}}\hF{0}{1}{}{\ol{\eta}\phi}{\dfrac{s^2}{4}}.$$
By Theorem \ref{FA and FC integral rep.} (i), putting $s = t / (1+\l)$ and Proposition \ref{Kummer second}, we have
\begin{align*}
& \ol{\a}^2(1+\l) \FAA{\a^2 \, ;\, \ol{\chi}, \ol{\eta}}{\ol{\chi}^2, \ol{\eta}^2}{\dfrac{2\l}{1+\l}, \dfrac{2}{1+\l}}\\
& = - \dfrac{\ol{\a}^2(1+\l)}{g(\a^2)} \sum_t \psi(t) \a^2(t) \hF{1}{1}{\ol{\chi}}{\ol{\chi}^2}{\dfrac{2\l t}{1+\l}} \hF{1}{1}{\ol{\eta}}{\ol{\eta}^2}{\dfrac{2t}{1+\l}}\\
& = - \dfrac{1}{g(\a^2)} \sum_s \a^2(s) \psi(\l s) \hF{1}{1}{\ol{\chi}}{\ol{\chi}^2}{2\l s} \psi(s)\hF{1}{1}{\ol{\eta}}{\ol{\eta}^2}{2s}\\
& = -\dfrac{1}{g(\a^2)} f(\l^2).
\end{align*}
Using \eqref{inversion Poch} and \eqref{Poch formula}, we have, for any $\mu \in \khat$, 
\begin{align*}
\widehat{f}(\mu) 
& = \sum_t f(t) \ol{\mu}(t)\\
& = \dfrac{1}{(1-q)^2} \sum_s \a^2(s) \sum_{\nu_1, \nu_2} \dfrac{1}{(\e)_{\nu_1}^\circ (\ol{\chi}\phi)_{\nu_1}^\circ (\e)_{\nu_2}^\circ (\ol{\eta}\phi)_{\nu_2}^\circ}\nu_1\nu_2\Big( \dfrac{s^2}{4} \Big) \sum_t \nu_1\ol{\mu}(t)\\
& = \dfrac{1}{q-1}\cdot \dfrac{1}{(\e)_\mu^\circ (\ol{\chi}\phi)_\mu^\circ} \sum_{\nu_2} \dfrac{\ol{\mu\nu_2}(4)}{(\e)_{\nu_2}^\circ (\ol{\eta}\phi)_{\nu_2}^\circ} \sum_s \a^2\mu^2\nu_2^2(s)\\
& = \dfrac{1}{(\e)_\mu^\circ (\ol{\chi}\phi)_\mu^\circ} \sum_{\a'\in\{\a,\a\phi\}} \dfrac{\a(4)}{(\e)_{\ol{\a'\mu}}^\circ (\ol{\eta}\phi)_{\ol{\a'\mu}}^\circ}\\
& = \dfrac{\a(4)}{(\e)_\mu^\circ (\ol{\chi}\phi)_\mu^\circ} \sum_{\a'\in\{\a,\a\phi\}} (\e)_{\a'\mu} (\eta\phi)_{\a'\mu}\\
& = \a(4) \sum_{\a'\in\{\a,\a\phi\}} (\e)_{\a'} (\eta\phi)_{\a'} \dfrac{(\a')_\mu (\a'\eta\phi)_\mu}{(\e)_\mu^\circ (\ol{\chi}\phi)_\mu^\circ}.
\end{align*}
Thus, by \eqref{Fourier trans.}, 
\begin{align*}
f(\l^2)
& = -\a(4)\sum_{\a'\in\{\a,\a\phi\}} (\e)_{\a'} (\eta\phi)_{\a'} \hF{2}{1}{\a', \a'\eta\phi}{\ol{\chi}\phi}{\l^2},
\end{align*}
and hence,
\begin{align*}
& \ol{\a}^2(1+\l) \FAA{\a^2 ; \ol{\chi}, \ol{\eta}}{\ol{\chi}^2, \ol{\eta}^2}{\dfrac{2\l}{1+\l}, \dfrac{2}{1+\l}}\\
& = \dfrac{\a(4)}{g(\a^2)}\sum_{\a'\in\{\a,\a\phi\}} (\e)_{\a'} (\eta\phi)_{\a'} \hF{2}{1}{\a', \a'\eta\phi}{\ol{\chi}\phi}{\l^2}.
\end{align*}
By \eqref{dup. form. gauss}, we have, for each $\a' \in \{\a, \a\phi\}$, 
\begin{align}
g(\a^2) = \a(4) \dfrac{g(\a') g(\a'\phi)}{g(\phi)} = \a(4) (\e)_{\a'} (\phi)_{\a'},\label{dup}
\end{align}
and hence, we obtain the theorem.
\end{proof}

The following two functions satisfy the same second order ordinary differential equation (\cite[Theorem 2.4]{Vidunas}):
$$\FAA{2a\, ;\, b_1, b_2}{2b_1, 2b_2}{1+z, 1-z}\quad \mathrm{and}\quad (1-z)^{-2a} \hF{2}{1}{a, a-b_2+\frac{1}{2}}{b_1+\frac{1}{2}}{\Big(\dfrac{1+z}{1-z}\Big)^2}.$$
Over $\k$, we have the following relation between their corresponding functions.
\begin{thm}\label{F2 = 2F1}
Suppose that $p \neq 2$ and $\e\not\in\{\b_1,\b_2\}$.
Then, for $\l \neq 1$,
\begin{align*}
& \FAA{\a^2\, ;\, \b_1, \b_2}{\b_1^2, \b_2^2}{1+\l, 1-\l} \\
& = \ol{\a}^2\Big(\dfrac{1-\l}{2}\Big) \sum_{\a' \in \{\a, \a\phi\}} \dfrac{(\ol{\b_2}\phi)_{\a'}}{(\phi)_{\a'}}\hF{2}{1}{\a', \a'\ol{\b_2}\phi}{\b_1\phi}{\Big(\dfrac{1+\l}{1-\l}\Big)^2}. 
\end{align*}
\end{thm}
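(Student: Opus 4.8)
The plan is to obtain this identity as a direct specialization of part (ii) of the preceding theorem, so that almost all of the analytic work has already been done. After renaming the free variable there to $w$ (to avoid a clash with the $\l$ of the present statement), that result reads, for $\e\notin\{\chi,\eta\}$ and $w\neq-1$,
$$\sum_{\a' \in \{\a, \a\phi\}} \dfrac{(\eta\phi)_{\a'}}{(\phi)_{\a'}} \hF{2}{1}{\a' , \a'\eta\phi}{\ol{\chi}\phi}{w^2} = \ol{\a}^2 (1+w) \FAA{\a^2\, ;\, \ol{\chi}, \ol{\eta}}{\ol{\chi}^2, \ol{\eta}^2}{\dfrac{2w}{1+w}, \dfrac{2}{1+w}}.$$
First I would make the parameter choice $\chi=\ol{\b_1}$ and $\eta=\ol{\b_2}$. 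Under this choice the hypothesis $\e\notin\{\chi,\eta\}$ becomes exactly $\e\notin\{\b_1,\b_2\}$, the lower character $\ol{\chi}\phi$ becomes $\b_1\phi$, the upper character $\a'\eta\phi$ becomes $\a'\ol{\b_2}\phi$, the $F_2$-parameters $\ol{\chi},\ol{\eta},\ol{\chi}^2,\ol{\eta}^2$ become $\b_1,\b_2,\b_1^2,\b_2^2$, and the coefficient becomes $(\ol{\b_2}\phi)_{\a'}/(\phi)_{\a'}$; so the left-hand side already has the shape of the sum on the right of the target.

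The one genuine idea is the change of variable $w=\frac{1+\l}{1-\l}$, which is legitimate precisely because $\l\neq1$. A one-line computation then gives $1+w=\frac{2}{1-\l}$, $\frac{2w}{1+w}=1+\l$, $\frac{2}{1+w}=1-\l$ and $w^2=\big(\frac{1+\l}{1-\l}\big)^2$; in particular the excluded case $w=-1$ would force $2=0$ and hence never occurs because $p\neq2$. Substituting these into the specialized identity turns it into
$$\sum_{\a'\in\{\a,\a\phi\}}\dfrac{(\ol{\b_2}\phi)_{\a'}}{(\phi)_{\a'}}\hF{2}{1}{\a',\a'\ol{\b_2}\phi}{\b_1\phi}{\Big(\dfrac{1+\l}{1-\l}\Big)^2} = \ol{\a}^2\Big(\dfrac{2}{1-\l}\Big)\FAA{\a^2\,;\,\b_1,\b_2}{\b_1^2,\b_2^2}{1+\l,1-\l},$$
which is exactly the target identity except that the $F_2$ appears on the right rather than on the left.

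Finally I would solve for the $F_2$. Since $\frac{2}{1-\l}\neq0$, the character value $\ol{\a}^2\big(\frac{2}{1-\l}\big)$ is nonzero and may be divided out; using $\big(\ol{\a}^2(x)\big)^{-1}=\a^2(x)=\ol{\a}^2(x^{-1})$ with $x=\frac{2}{1-\l}$ converts the prefactor into $\ol{\a}^2\big(\frac{1-\l}{2}\big)$, giving precisely the claimed formula. I do not expect any real obstacle here: once the substitution $w=\frac{1+\l}{1-\l}$ is recognized, everything reduces to the elementary verifications above, namely the three argument identities, the character inversion, and the transfer of the two hypotheses $\e\notin\{\b_1,\b_2\}$ and $\l\neq1$.
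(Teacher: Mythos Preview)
Your proof is correct, and it takes a genuinely different route from the paper's own argument. The paper proves Theorem~\ref{F2 = 2F1} from scratch: it applies Theorem~\ref{FA and FC integral rep.}\,(i) to write the $F_2$ at $(1+\l,1-\l)$ as a character sum of products of ${}_1F_1$'s, converts these into ${}_0F_1$'s via Kummer's product formula (Proposition~\ref{Kummer second}), and then carries out a Fourier-type computation term by term, finishing with the duplication formula~\eqref{dup}. This is essentially the same machinery that was already used to prove part (ii) of the preceding theorem, repeated for the new arguments.

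Your approach is more economical: you observe that the M\"obius substitution $w=\frac{1+\l}{1-\l}$ (well defined because $\l\neq1$, and never equal to $-1$ because $p\neq2$) transforms the arguments $\big(\frac{2w}{1+w},\frac{2}{1+w}\big)$ of part (ii) into $(1+\l,1-\l)$, so Theorem~\ref{F2 = 2F1} follows at once by specialization and the character inversion $\big(\ol{\a}^2(\tfrac{2}{1-\l})\big)^{-1}=\ol{\a}^2(\tfrac{1-\l}{2})$. What your argument buys is brevity and the recognition that the two results are equivalent under a change of variable; what the paper's direct proof buys is that Theorem~\ref{F2 = 2F1} stands independently of the preceding theorem, which is useful later when it is invoked in the proof of Theorem~\ref{F3 and 3F2, 2F1}\,(iii).
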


\begin{proof}
By Theorem \ref{FA and FC integral rep.} (i) and Proposition \ref{Kummer second}, we have
\begin{align*}
& g(\a^2) \FAA{\a^2\, ;\, \b_1, \b_2}{\b_1^2, \b_2^2}{1+\l, 1-\l}\\
& = -\sum_t \psi(t) \a^2(t) \hF{1}{1}{\b_1}{\b_1^2}{(1+\l)t} \hF{1}{1}{\b_2}{\b_2^2}{(1-\l) t}\\
& = -\sum_t \a^2(t) \hF{0}{1}{}{\b_1\phi}{\dfrac{(1+\l)^2 t^2}{16}} \hF{0}{1}{}{\b_2\phi}{\dfrac{(1-\l)^2t^2}{16}}\\
& = -\dfrac{1}{(1-q)^2}\sum_{\nu, \mu} \dfrac{1}{(\e)_\nu^\circ (\b_1\phi)_\nu^\circ (\e)_\mu^\circ (\b_2\phi)_\mu^\circ}\nu^2\Big( \dfrac{1+\l}{4}\Big) \mu^2\Big( \dfrac{1-\l}{4}\Big) \sum_t \a^2\nu^2\mu^2(t)\\
& = \ol{\a}^2\Big(\dfrac{1-\l}{4}\Big) \dfrac{1}{1-q} \sum_{\a' \in \{\a, \a\phi\}} \sum_\nu \dfrac{1}{(\e)_\nu^\circ (\b_1\phi)_\nu^\circ (\e)_{\ol{\a'\nu}}^\circ (\b_2\phi)_{\ol{\a'\nu}}^\circ} \nu^2\Big( \dfrac{1+\l}{1-\l}\Big).
\end{align*}
Using \eqref{inversion Poch} and \eqref{Poch formula}, the last right-hand side above is equal to
\begin{align*}
& \ol{\a}^2\Big( \dfrac{1-\l}{4}\Big) \sum_{\a' \in \{\a, \a\phi\}} (\e)_{\a'} (\ol{\b_2}\phi)_{\a'} \hF{2}{1}{\a', \a'\ol{\b_2}\phi}{\b_1\phi}{\Big(\dfrac{1+\l}{1-\l}\Big)^2}.
\end{align*}
By \eqref{dup}, we obtain the theorem.
\end{proof}

Vid\=unas \cite[Theorem 2.1]{Vidunas} also proves the following.
\begin{enumerate}
\item The following functions satisfy the same ordinary differential equation of order 3:
$$\FAA{a\, ;\, b_1, b_2}{c_1, c_2}{z, 1} \quad \mathrm{and} \quad \hF{3}{2}{a, b_1, a-c_2+1}{c_1, a+b_2-c_2+1}{z}.$$

\item The following functions satisfy the same ordinary differential equation of order 3:
$$\FAA{a\, ;\, b_1, b_2}{c_1, c_2}{z, 1-z} \quad \mathrm{and} \quad (1-z)^{-a}\hF{3}{2}{a, c_1-b_1, a-c_2+1}{c_1, a+b_2-c_2+1}{\dfrac{z}{z-1}}.$$
\end{enumerate}
The following theorem provides relations between the corresponding functions over $\k$.
The relations are essentially proved by He-Li-Zhang \cite[Theorems 1.4 and 3.5]{HLZ}.
\begin{thm}\,
\begin{enumerate}
\item Suppose that $\e \not\in \{\a, \b_2, \b_1\ol{\c_1}, \b_2\ol{\c_2}\}$.
Then, for any $\l \in \k$, we have
$$\FAA{\a\, ;\, \b_1,\b_2}{\c_1,\c_1}{\l,1} = \dfrac{(\ol{\b_2}\c_2)_{\ol\a}}{(\c_2)_{\ol\a}^\circ} \hF{3}{2}{\a, \b_1, \a\ol{\c_2}}{\c_1,\a\b_2\ol{\c_2}}{\l}.$$

\item Suppose that $\e \not\in \{\a, \b_1, \b_2, \b_1\ol{\c_1}, \b_2\ol{\c_2}\}$.
Then, for any $\l \in \k-\{1\}$, we have
\begin{align*}
\FAA{\a\, ;\, \b_1,\b_2}{\c_1,\c_2}{\l, 1-\l} = \dfrac{(\ol{\b_2}\c_2)_{\ol\a}}{(\c_2)_{\ol\a}^\circ} \ol{\a}(1-\l) \hF{3}{2}{\a, \ol{\b_1}\c_1, \a\ol{\c_2}}{\c_1, \a\b_2\ol{\c_2}}{\dfrac{\l}{\l-1}}.
\end{align*}
\end{enumerate}
\end{thm}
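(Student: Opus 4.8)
The plan is to prove (i) directly by collapsing the summation over the second index, and then to deduce (ii) from (i) by an Euler-type transformation of $F_2$ over $\k$.

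For (i), I would start from the defining double sum of $\FAA{\a\, ;\, \b_1,\b_2}{\c_1,\c_2}{\l,1}$ and use \eqref{Poch formula} to split $(\a)_{\nu_1\nu_2}=(\a)_{\nu_1}(\a\nu_1)_{\nu_2}$. Because the second variable is $1$, the factor $\nu_2(1)$ is trivial and the inner sum over $\nu_2$ is precisely $(1-q)\hFF{\a\nu_1,\b_2}{\c_2}{1}$. I would evaluate this by the Gauss summation formula \eqref{Euler-Gauss}, obtaining $\frac{g^\circ(\c_2)\,g(\c_2\ol{\a}\ol{\nu_1}\ol{\b_2})}{g^\circ(\c_2\ol{\a}\ol{\nu_1})\,g^\circ(\c_2\ol{\b_2})}$; writing the two $\nu_1$-dependent Gauss sums as Pochhammer symbols in $\nu_1$ and applying \eqref{inversion Poch} and \eqref{Poch formula}, the $\nu_1$-part of the summand becomes $(\a)_{\nu_1}(\b_1)_{\nu_1}(\a\ol{\c_2})_{\nu_1}\big/\big[(\e)^\circ_{\nu_1}(\c_1)^\circ_{\nu_1}(\a\b_2\ol{\c_2})^\circ_{\nu_1}\big]$, i.e. exactly the summand of $\hF{3}{2}{\a,\b_1,\a\ol{\c_2}}{\c_1,\a\b_2\ol{\c_2}}{\l}$, while the $\nu_1$-independent factor collapses to a constant. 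The hypothesis $\b_2\neq\c_2$ (that is, $\b_2\ol{\c_2}\neq\e$) is what allows me to replace $g^\circ(\ol{\b_2}\c_2)$ by $g(\ol{\b_2}\c_2)$, so that this constant equals the claimed $\frac{(\ol{\b_2}\c_2)_{\ol\a}}{(\c_2)^\circ_{\ol\a}}$; the remaining hypotheses $\a\neq\e$, $\b_2\neq\e$, $\b_1\neq\c_1$ guarantee that \eqref{Euler-Gauss} and the above manipulations are valid with no degenerate terms arising.

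For (ii), I would first establish the finite-field Euler transformation in the first variable,
\[
\FAA{\a\, ;\, \b_1,\b_2}{\c_1,\c_2}{\l_1,\l_2}=\ol{\a}(1-\l_1)\,\FAA{\a\, ;\, \ol{\b_1}\c_1,\b_2}{\c_1,\c_2}{\dfrac{\l_1}{\l_1-1},\dfrac{\l_2}{1-\l_1}},
\]
and then specialize. This transformation follows from the sum representation Theorem \ref{FA and FC integral rep.}(i) for $F_2=F_A^{(2)}$: applying Kummer's first transformation $\hF{1}{1}{\b_1}{\c_1}{z}=\psi(-z)\,\hF{1}{1}{\ol{\b_1}\c_1}{\c_1}{-z}$ (the companion of Proposition \ref{Kummer second}) to the factor $\hF{1}{1}{\b_1}{\c_1}{\l_1 t}$ turns $\psi(t)$ into $\psi((1-\l_1)t)$, and the substitution $s=(1-\l_1)t$ (legitimate since $\l_1\neq1$) produces the prefactor $\ol{\a}(1-\l_1)$ and the shifted arguments, after which Theorem \ref{FA and FC integral rep.}(i) read backwards gives the transformed $F_2$. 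Specializing $(\l_1,\l_2)=(\l,1-\l)$ makes the second argument $\frac{1-\l}{1-\l}=1$ and the first $\frac{\l}{\l-1}$, so the right-hand side is $\ol{\a}(1-\l)\,\FAA{\a\, ;\, \ol{\b_1}\c_1,\b_2}{\c_1,\c_2}{\frac{\l}{\l-1},1}$; applying (i) to this $F_2$ (whose first $\b$-parameter is now $\ol{\b_1}\c_1$) yields $\hF{3}{2}{\a,\ol{\b_1}\c_1,\a\ol{\c_2}}{\c_1,\a\b_2\ol{\c_2}}{\frac{\l}{\l-1}}$ with the same constant, which is exactly the claim. A reassuring consistency check is that the hypothesis $\b_1\ol{\c_1}\neq\e$ of (i), applied to the new first parameter $\ol{\b_1}\c_1$, reads $\ol{\b_1}\neq\e$, i.e. $\b_1\neq\e$ --- precisely the extra assumption that (ii) carries over (i).

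The main obstacle in (i) is the bookkeeping of Gauss versus $g^\circ$ factors and the verification that the leftover constant is $\frac{(\ol{\b_2}\c_2)_{\ol\a}}{(\c_2)^\circ_{\ol\a}}$, which is exactly where $\b_2\neq\c_2$ enters. The main obstacle in (ii) is establishing the Euler transformation of $F_2$ with the correct parameter change $\b_1\mapsto\ol{\b_1}\c_1$ and prefactor $\ol{\a}(1-\l)=\hF{1}{0}{\a}{}{\l}$ (using \eqref{1F0 int.}); this rests on having Kummer's first transformation for ${}_1F_1$ over $\k$ in hand. Should that transformation be unavailable in the needed form, I would instead prove (ii) directly by expanding the right-hand $\hF{3}{2}{}{}{}$ via its integral representation \eqref{3F2 int.} and matching it against a double-character-sum (Euler) representation of $F_2$, whose derivation through Jacobi sums would then be the technical heart of the argument.
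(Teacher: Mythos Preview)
Your plan is correct, and it takes a genuinely different route from the paper. The paper proves both (i) and (ii) by the Euler-type character-sum representation \eqref{FA int.} of $F_2$ (from \cite[Theorem 3.3]{N}): for (i) one substitutes $v=(t-1)/t$, for (ii) one substitutes $s=1-u$, $t=1/(1-v)$, and in each case the resulting double sum is recognised as the integral representation \eqref{3F2 int.} of ${}_3F_2$. No confluent functions, no Gauss summation, no reduction of (ii) to (i).

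Your argument for (i) via the series is cleaner and more elementary: once you check that the hypotheses $\b_2\neq\e$ and $\b_2\neq\c_2$ force $\{\a\nu_1,\b_2\}\neq\{\e,\c_2\}$ for every $\nu_1$, the Gauss formula \eqref{Euler-Gauss} applies uniformly and the bookkeeping goes through exactly as you describe. Your argument for (ii) is in fact more in the spirit of this paper than the paper's own proof, since it rests on the confluent sum representation (Theorem \ref{FA and FC integral rep.}(i)) together with Kummer's first transformation for ${}_1F_1$; the latter is not stated here but follows in one line from \eqref{1F1 int.} via $u\mapsto 1-u$ (under $\b_1\neq\c_1$), so there is no gap. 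What your approach buys is modularity: (ii) is deduced from (i), and the extra hypothesis $\b_1\neq\e$ is explained as exactly what is needed to apply (i) to the transformed parameters. What the paper's approach buys is uniformity and independence from the confluent machinery, at the cost of importing the external representation \eqref{FA int.}.
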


\begin{proof}
It is trivial when $\l=0$, and hence we may assume that $\l \neq 0$.
By the author \cite[Theorem 3.3]{N}, we have
\begin{align}
&\Big( \prod_{i=1}^2 j(\b_i,\ol{\b_i}\c_i) \Big) \FAA{\a\, ;\, \b_1,\b_2}{\c_1,\c_2}{x,y} \label{FA int.}\\
&=\sum_{u,v\in \k^\times}\ol{\a}(1-xu -yv) \b_1(u)\ol{\b_1}\c_1(1-u) \b_2(v) \ol\b_2\c_2(1-v),\nonumber
\end{align}
when $\e \not\in \{ \a, \ol{\b_1}\c_1, \ol\b_2\c_2 \}$ and $x,y \neq 0$.
By this, we have
\begin{align*}
& \FAA{\a\, ;\, \b_1,\b_2}{\c_1,\c_2}{\l, 1}\\
& = \Big( \prod_{i=1}^2 j(\b_i, \ol{\b_i}\c_i) \Big)^{-1} \sum_{u,v} \ol{\a}(1-\l u - v) \b_1(u) \ol{\b_1}\c_1(1-u) \b_2(v) \ol{\b_2}\c_2(1-v).
\end{align*}
Letting $v = (t-1)/t$ and using \eqref{3F2 int.}, the right-hand side above is equal to
\begin{align*}
& \b_2(-1)\Big( \prod_{i=1}^2 j(\b_i, \ol{\b_i}\c_i) \Big)^{-1}  \sum_{u,t} \ol{\a} (1-\l ut) \b_1(u)\ol{\b_1}\c_1(1-u) \a\ol{\c_2}(t) \b_2(1-t)\\
& = \b_2(-1) j(\b_2, \ol{\b_2}\c_2)^{-1} j(\a\ol{\c_2}, \b_2)  \hF{3}{2}{\a, \b_1, \a\ol{\c_2}}{\c_1,\a\b_2\ol{\c_2}}{\l}.
\end{align*}
By \eqref{J=G} and \eqref{Gauss sum thm}, one shows
\begin{align}
\b_2(-1) j(\b_2, \ol{\b_2}\c_2)^{-1} j(\a\ol{\c_2}, \b_2)  = \dfrac{(\ol{\b_2}\c_2)_{\ol\a}}{(\c_2)_{\ol\a}^\circ},\label{3.13}
\end{align}
and hence, we obtain (i).

By \eqref{FA int.}, we have 
\begin{align*}
& \FAA{\a\, ;\, \b_1,\b_2}{\c_1,\c_2}{\l, 1-\l}\\
& = \Big( \prod_{i=1}^2 j(\b_i, \ol{\b_i}\c_i) \Big)^{-1} \sum_{u,v} \ol{\a}(1-\l u - v(1-\l)) \b_1(u) \ol{\b_1}\c_1(1-u) \b_2(v) \ol{\b_2}\c_2(1-v).
\end{align*}
Putting $s = 1-u$ and $t = 1/(1-v)$ and using \eqref{3F2 int.}, the right-hand side above is equal to
\begin{align*}
& \Big( \prod_{i=1}^2 j(\b_i, \ol{\b_i}\c_i) \Big)^{-1} \ol{\a}(1-\l) \sum_{s,t} \ol{\a}\Big( 1-\dfrac{\l st}{\l-1} \Big) \ol{\b_1}\c_1(s) \b_1(1-s) \a\ol{\c_2}(t)\b_2(1-t)\\
& = \b_2(-1)j(\b_2,\ol{\b_2}\c_2)^{-1} j(\a\ol{\c_2}, \b_2) \ol{\a}(1-\l) \hF{3}{2}{\a, \ol{\b_1}\c_1, \a\ol{\c_2}}{\c_1, \a\b_2\ol{\c_2}}{\dfrac{\l}{\l-1}}.
\end{align*}
Thus, we obtain (ii) by \eqref{3.13}.
\end{proof}

\subsection{Reduction and transformation formulas for Appell's functions $F_3$ over $\k$}

Over $\C$, Vid\=unas \cite[Theorem 3.1]{Vidunas} shows the followings.
\begin{enumerate}
\item The following functions satisfy the same ordinary differential equation of order 3:
$$ \FBB{a_1, a_2\, ;\, b_1,b_2}{c}{z,1} \quad \mathrm{and}\quad \hF{3}{2}{a_1, b_1, c-a_2-b_2}{c-a_2, c-b_2}{z}.$$

\item The following functions satisfy the same ordinary differential equation of order 3:
\begin{align*}
\FBB{a_1, a_2\, ;\, b_1, b_2}{c}{z, \dfrac{z}{z-1}}
\end{align*}
and
\begin{align*}
 z^{1-c}(1-z)^{a_2} \hF{3}{2}{1+a_1+a_2-c, 1+a_2+b_1-c, 1-b_2}{1+a_1+a_2+b_1-c, 1+a_2-b_2}{1-z}.
\end{align*}

\item The following functions satisfy the same ordinary differential equation of order 2:
$$ \FBB{2b_1, 2b_2\, ;\, 1-2b_1, 1-2b_2}{2c}{\dfrac{z}{z+1}, \dfrac{z}{z-1}}$$
and
$$(1+z)^{1-2b_2-2c}(1-z)^{2b_2} \hF{2}{1}{b_2-b_1+c, b_1+b_2+c-\frac{1}{2}}{2c}{\dfrac{4z}{(z+1)^2}}.$$
\end{enumerate}
Relations between the corresponding functions over $\k$ are the following.
As a remark, (i) of the following theorem is essentially proved by He \cite[Corollary 2.2]{He2}.

\begin{thm}\label{F3 and 3F2, 2F1}\,
\begin{enumerate}
\item Suppose that $\e \not\in \{\a_1, \a_2, \b_2, \ol{\b_1\b_2}\c\}$.
Then, for any $\l \in \k$,
$$ \FBB{\a_1, \a_2\, ;\, \b_1,\b_2}{\c}{\l,1} = \dfrac{g^\circ(\c) g(\ol{\a_2\b_2}\c)}{g^\circ(\ol{\a_2}\c) g^\circ(\ol{\b_2}\c)} \hF{3}{2}{\a_1, \b_1, \ol{\a_2\b_2}\c}{\ol{\a_2}\c, \ol{\b_2}\c}{\l}.$$

\item Suppose that $\e \not\in\{ \a_1,\a_2, \b_1,\b_2, \ol{\a_1\a_2}\c \}$. 
Then, for $\l\neq1$, we have
\begin{align*}
& \FBB{\a_1,\a_2 \,;\, \b_1, \b_2}{\c}{\l, \dfrac{\l}{\l-1}}\\
& = \dfrac{g(\a_1\a_2\ol{\c}) g(\a_2\b_1\ol{\c}) g(\ol{\b_2})}{g(\ol{\c}) g^\circ (\a_1\a_2\b_1\ol{\c}) g^\circ(\a_2\ol{\b_2})} \ol{\c}(\l) \a_2(1-\l) \hF{3}{2}{\a_1\a_2\ol{\c}, \a_2\b_1\ol{\c}, \ol{\b_2}}{\a_1\a_2\b_1\ol{\c}, \a_2\ol{\b_2}}{1-\l}.
\end{align*}

\item Suppose that $p \neq 2$, $\e\not\in\{\b_1^2,\b_2^2\}$ and $(\b_1\b_2)^2, (\b_1\ol{\b_2})^2 \not\in \{\c^2, \ol{\c}^2\}$.
Then, for $\l \neq \pm 1$, we have
\begin{align*}
& \FBB{\b_1^2, \b_2^2\, ;\, \ol{\b_1}^2, \ol{\b_2}^2}{\c^2}{\dfrac{\l}{\l+1}, \dfrac{\l}{\l-1}}\\
& = \ol{\b_2^2\c^2}(1+\l) \b_2^2(1-\l) \sum_{\c' \in \{\c, \c\phi\}} \hF{2}{1}{\ol{\b_1}\b_2\c', \b_1\b_2\c'\phi}{\c^2}{\dfrac{4\l}{(\l+1)^2}}.
\end{align*}
\end{enumerate}
\end{thm}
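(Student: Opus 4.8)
The plan is to follow the template of the proof of Theorem \ref{F2 = 2F1}, namely to combine the sum representation of Theorem \ref{FA and FC integral rep.} with a product formula for confluent functions and with Fourier inversion \eqref{Fourier trans.}. This is natural because part (iii), like Theorem \ref{F2 = 2F1}, is governed by a second-order relation and carries the duplication structure signalled by the hypothesis $p\neq2$, whereas parts (i) and (ii) were third-order and were handled through ${}_3F_2$'s. First I would feed the left-hand side $F_3=F_B^{(2)}$ into Theorem \ref{FA and FC integral rep.} (ii), obtaining
\[
-\frac{q}{g^\circ(\c^2)}\,\FBB{\b_1^2,\b_2^2\,;\,\ol{\b_1}^2,\ol{\b_2}^2}{\c^2}{\frac{\l}{\l+1},\frac{\l}{\l-1}}
=\sum_{t\in\k^\times}\psi(-t)\,\ol{\c^2}(t)\,\hF{2}{0}{\b_1^2,\ol{\b_1}^2}{}{\frac{\l}{(\l+1)t}}\,\hF{2}{0}{\b_2^2,\ol{\b_2}^2}{}{\frac{\l}{(\l-1)t}}.
\]

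The decisive step is then a product formula collapsing each confluent factor $\hF{2}{0}{\b_i^2,\ol{\b_i}^2}{}{x}$, the $F_3$-analogue of Kummer's identity Proposition \ref{Kummer second} and again available precisely because $p\neq2$. Such a formula should rewrite the product of the two confluent functions as a single confluent function whose $\l$-dependence enters only through $\frac{4\l}{(\l+1)^2}$; here the elementary relation $1-\frac{4\l}{(\l+1)^2}=\bigl(\frac{\l-1}{\l+1}\bigr)^2$ together with a change of variables of the form $s=(\l+1)t$ is what aligns the two originally distinct arguments $\frac{\l}{\l+1}$ and $\frac{\l}{\l-1}$ and simultaneously releases the powers $(1+\l)$ and $(1-\l)$ that appear in the final prefactor.

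Having reduced the expression to a single sum over $t$, I would compute the Fourier transform of the resulting function of $\l$ using \eqref{inversion Poch}, \eqref{Poch formula} and the duplication formula \eqref{dup. form. Poch}. As in the proof of Theorem \ref{F2 = 2F1}, the bottom parameter $\c^2$, being a square, imposes a quadratic constraint on the summation character; resolving it through the splitting $\sum_{\nu}G(\nu^2)\nu^2(x)=\tfrac12\sum_{\nu}(1+\nu(-1))G(\nu)\nu(x)$ is exactly what produces the two-term sum $\sum_{\c'\in\{\c,\c\phi\}}$, each summand becoming a $\hFF{\ol{\b_1}\b_2\c',\b_1\b_2\c'\phi}{\c^2}{\frac{4\l}{(\l+1)^2}}$. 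Fourier inversion \eqref{Fourier trans.} then assembles these into the claimed combination, while the scalar $\ol{\b_2^2\c^2}(1+\l)\,\b_2^2(1-\l)$ is extracted from the Gauss-sum and Pochhammer factors by \eqref{Gauss sum thm}, \eqref{J=G} and \eqref{dup. form. gauss}, together with the normalization $g^\circ(\c^2)$ coming from Theorem \ref{FA and FC integral rep.} (ii).

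I expect the main obstacle to be this first collapse step. Unlike Theorem \ref{F2 = 2F1}, where the two confluent factors had the same shape and symmetric arguments, here the arguments $\frac{\l}{\l+1}$ and $\frac{\l}{\l-1}$ are genuinely asymmetric, so the product formula must be applied in a way that tolerates the asymmetry; it is precisely this asymmetry that obstructs a single closed ${}_2F_1$ and forces the averaged pair over $\c'\in\{\c,\c\phi\}$. The remaining bookkeeping is to verify that the hypotheses $\e\notin\{\b_1^2,\b_2^2\}$ and $(\b_1\b_2)^2,(\b_1\ol{\b_2})^2\notin\{\c^2,\ol{\c}^2\}$ are exactly what keep the Jacobi and Gauss sums entering the prefactor and the product formula nonzero throughout, so that every reciprocal and every division by $g^\circ$ is legitimate.
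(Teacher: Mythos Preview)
Your proposal only treats part (iii); parts (i) and (ii) are not argued beyond the remark that they ``were handled through ${}_3F_2$'s.'' The paper proves (i) and (ii) via the double-sum representation \eqref{FB int.} of $F_3$ (from \cite{N}): an explicit change of variables in each case brings the sum to the shape \eqref{3F2 int.}, after which \eqref{J=G} and \eqref{Gauss sum thm} identify the constant. Those two parts still need to be written out.

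For part (iii) your route diverges from the paper's and contains a genuine gap. The paper does \emph{not} apply Theorem~\ref{FA and FC integral rep.}\,(ii) to the $F_3$ directly. Instead it first uses Remark~\ref{FA=FB} to convert the left-hand $F_3$ into an $F_2=F_A^{(2)}$, then invokes the already-proved Theorem~\ref{F2 = 2F1} to write that $F_2$ as a sum $\sum_{\chi\in\{\ol{\b_1\b_2\c},\,\ol{\b_1\b_2\c}\phi\}}$ of two ${}_2F_1$'s, and finally applies the transformation \eqref{2F1 formula} to each ${}_2F_1$ (together with \eqref{dup. form. gauss} and \eqref{Gauss sum thm}) to bring the argument to $4\l/(\l+1)^2$ and to produce the prefactor $\ol{\b_2^2\c^2}(1+\l)\,\b_2^2(1-\l)$. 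Your direct attack via the ${}_2F_0$ representation hinges on an unspecified ``product formula collapsing each confluent factor ${}_2F_0(\b_i^2,\ol{\b_i}^2;\,\cdot\,)$,'' billed as ``the $F_3$-analogue of Kummer's identity.'' No such identity is stated in the paper; Proposition~\ref{Kummer second} is specific to ${}_1F_1\leftrightarrow{}_0F_1$, and the ${}_2F_0$ product formulas actually used elsewhere (from \cite{Otsubo2}) require the two factors to share the same argument up to sign, which the asymmetric pair $\frac{\l}{(\l+1)t}$, $\frac{\l}{(\l-1)t}$ does not. You yourself flag this asymmetry as ``the main obstacle,'' but do not supply a formula that overcomes it; until you do, the collapse step is not justified. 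The paper sidesteps the obstacle entirely by passing through $F_2$ and reusing Theorem~\ref{F2 = 2F1}, where the necessary Kummer-type input is available.
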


Note the symmetry
\begin{align*}
\FBB{\a_1, \a_2\, ;\, \b_1, \b_2}{\c}{x,y}=\FBB{\b_1, \b_2\, ;\, \a_1, \a_2}{\c}{x,y}\quad (x,y \in \k).
\end{align*}
We use the following formula due to Otsubo \cite[Corollary 3.16]{Otsubo}: if $\a, \b \not\in\{\e, \c\}$ and $\l\neq0$, then
\begin{equation}
\hF{2}{1}{\a,\b}{\c}{\l} = \dfrac{g^\circ(\c) g(\a\b\ol{\c})}{g(\a)g(\b)}\a\ol{\c}(\l)\ol{\a\b}\c(1-\l)\hF{2}{1}{\ol\a, \ol\a\c}{\ol{\a\b}\c}{\dfrac{\l-1}{\l}}. \label{2F1 formula}
\end{equation}

\begin{proof}[Proof of Theorem \ref{F3 and 3F2, 2F1}]
When $\l =0$, the theorem is clear.
Thus, we prove for $\l \neq 0$.
By the author \cite[Theorem 3.5]{N}, if $\e\not\in\{\a_1,\a_2, \ol{\b_1\b_2}\c\}$, then for $x,y \neq 0$,
\begin{align}
&\dfrac{g(\b_1)g(\b_2)g(\ol{\b_1\b_2}\c) }{g^\circ(\c)}\FBB{\a_1,\a_2\, ;\, \b_1,\b_2}{\c}{x,y} \label{FB int.}\\
&\quad=\sum_{u, v\in\k^\times} \ol{\a_1}(1-xu)\ol{\a_2}(1-yv) \b_1(u) \b_2(v) \ol{\b_1\b_2}\c(1-u-v).\nonumber
\end{align}

(i) By \eqref{FB int.}, we have
\begin{align*}
& \FBB{\a_1,\a_2\, ;\, \b_1, \b_2}{\c}{\l,1} \\
& = \dfrac{g^\circ(\c)}{g(\b_1)g(\b_2)g(\ol{\b_1\b_2}\c)} \sum_{u,v} \ol{\a_1}(1-\l u)\ol{\a_2}(1-v) \b_1(u) \b_2(v) \ol{\b_1\b_2}\c(1-u-v).
\end{align*}
Putting $t = 1-v$ and $s = u/t$, and using \eqref{3F2 int.} (under the assumption $\e\not\in\{\a_1,\b_2,\ol{\b_1\b_2}\c\}$), the right-hand side above is equal to
\begin{align*}
& \dfrac{g^\circ(\c)}{g(\b_1)g(\b_2)g(\ol{\b_1\b_2}\c)} \sum_{s,t} \ol{\a_1}(1-\l st) \b_1(s) \ol{\b_1\b_2}\c(1-s) \ol{\a_2\b_2}\c(t) \b_2(1-t)\\
& =\dfrac{g^\circ(\c)}{g(\b_1)g(\b_2)g(\ol{\b_1\b_2}\c)}j(\b_1,\ol{\b_1\b_2}\c)j(\b_2,\ol{\a_2\b_2}\c) \hF{3}{2}{\a_1,\b_1,\ol{\a_2\b_2}\c}{\ol{\a_2}\c, \ol{\b_2}\c}{\l}.
\end{align*}
Thus, we obtain (i) by \eqref{J=G}.

(ii) By \eqref{FB int.}, we have (under the assumption $\e \not\in \{\b_1,\b_2,\ol{\a_1\a_2}\c\}$)
\begin{align*}
& \dfrac{g(\a_1)g(\a_2) g(\ol{\a_1\a_2}\c)}{g^\circ(\c)}\FBB{\a_1, \a_2\, ;\, \b_1, \b_2}{\c}{\l, \dfrac{\l}{\l-1}}\\
& = \sum_{u,v} \ol{\b_1}(1-\l u) \ol{\b_2}\Big( 1 - \frac{\l}{\l-1}v \Big) \a_1(u) \a_2(v) \ol{\a_1\a_2}\c(1-u-v).
\end{align*}
Putting $s = \frac{1}{1-\l u}$ and $t = 1-\frac{\l v}{\l-1}$, the right-hand side above is equal to
\begin{align*}
\a_1\a_2(-1)\ol{\c}(\l)\a_2(1-\l)\sum_{s,t} \ol{\a_1\a_2}\c(1-(1-\l)st) \a_2\b_1\ol{\c}(s) \a_1(1-s) \ol{\b_2}(t) \a_2(1-t).
\end{align*}
Thus, noting the assumption $\e\not\in\{\a_1,\a_2,\ol{\a_1\a_2}\c\}$, we obtain (ii) by \eqref{3F2 int.}, \eqref{J=G} and \eqref{Gauss sum thm}.

(iii) Using the relation in Remark \ref{FA=FB}, \eqref{Gauss sum thm} and \eqref{dup. form. gauss}, we have
\begin{align}
& \FBB{\b_1^2, \b_2^2\, ;\, \ol{\b_1}^2, \ol{\b_2}^2}{\c^2}{\dfrac{\l}{\l+1}, \dfrac{\l}{\l-1}} \label{eq.}\\
& = G \cdot \b_1^2\b_2^2(4\l) \ol{\b_1}^2(\l+1)\ol{\b_2}^2(\l-1) \FAA{\ol{\b_1^2\b_2^2\c^2}\, ;\, \ol{\b_1}^2, \ol{\b_2}^2}{\ol{\b_1}^4, \ol{\b_2}^4}{\dfrac{\l+1}{\l}, \dfrac{\l-1}{\l}},\nonumber
\end{align}
where
$$ G = (\ol\c^2)_{\ol{\b_1^2\b_2^2}}(\b_1^2)_{\b_1^2}(\b_2^2)_{\b_2^2} = \dfrac{g(\ol{\b_1^2\b_2^2\c^2})}{g(\ol\c^2)}\cdot \dfrac{g(\b_1^4)g(\b_2^4)}{g(\b_1^2)g(\b_2^2)}.$$
Applying to \eqref{dup. form. gauss} to $g(\b_1^4)$ and $g(\b_2^4)$ and using \eqref{Gauss sum thm},
$$G = \dfrac{g(\ol{\b_1^2\b_2^2\c^2}) g(\b_1^2\phi)g(\b_2^2\phi)}{q \phi(-1) g(\ol{\c}^2)}.$$
By Theorem \ref{F2 = 2F1}, we have
\begin{align*}
& \FAA{\ol{\b_1^2\b_2^2\c^2}\, ;\, \ol{\b_1}^2, \ol{\b_2}^2}{\ol{\b_1}^4, \ol{\b_2}^4}{\dfrac{\l+1}{\l}, \dfrac{\l-1}{\l}}\\
& = \b_1^2\b_2^2\c^2\Big( \dfrac{\l-1}{2\l}\Big) \sum_{\chi \in \{\ol{\b_1\b_2\c},\, \ol{\b_1\b_2\c}\phi\}} \dfrac{(\b_2^2\phi)_\chi}{(\phi)_\chi} \hF{2}{1}{\chi, \chi\phi\b_2^2}{\ol{\b_1}^2\phi}{\Big(\dfrac{\l+1}{\l-1}\Big)^2}.
\end{align*}
Using \eqref{2F1 formula} and \eqref{Gauss sum thm} and applying \eqref{dup. form. gauss} to $g(\phi)/g(\chi)g(\chi\phi)$ (noting $\chi^2 = \ol{\b_1^2\b_2^2\c^2}$ and the assumption $(\b_1\b_2)^2, (\b_1\ol{\b_2})^2\not\in\{\c^2,\ol\c^2\}$), we have 
\begin{align*}
&\b_1^2\b_2^2\c^2\Big( \dfrac{\l-1}{2\l}\Big) \sum_{\chi \in \{\ol{\b_1\b_2\c},\, \ol{\b_1\b_2\c}\phi\}} \dfrac{(\b_2^2\phi)_\chi}{(\phi)_\chi}  \hF{2}{1}{\chi, \chi\b_2^2\phi}{\ol{\b_1}^2\phi}{\Big(\dfrac{\l+1}{\l-1}\Big)^2}\\
& =\c^2(2) \ol{\b_1^2\b_2^2}(2\l) \b_2^4(\l-1) \b_1^2\ol{\b_2^2\c^2}(\l+1) \sum_\chi \dfrac{g(\phi) g^\circ(\ol{\b_1}^2\phi) g(\ol{\c}^2)}{g(\chi)g(\chi\phi) g(\b_2^2 \phi)}\hF{2}{1}{\ol{\chi}, \ol{\chi\b_1^2}\phi}{\c^2}{\dfrac{4\l}{(\l+1)^2}}\\
& = \ol{\b_1^2\b_2^2}(4\l) \b_2^4(\l-1) \b_1^2\ol{\b_2^2\c^2}(\l+1) G^{-1} \sum_\chi \hF{2}{1}{\ol{\chi}, \ol{\chi\b_1^2}\phi}{\c^2}{\dfrac{4\l}{(\l+1)^2}}.
\end{align*}
Thus, putting $\c' = \ol{\b_1\b_2\chi}\phi$, we have
\begin{align*}
& \FAA{\ol{\b_1^2\b_2^2\c^2}\, ;\, \ol{\b_1}^2, \ol{\b_2}^2}{\ol{\b_1}^4, \ol{\b_2}^4}{\dfrac{\l+1}{\l}, \dfrac{\l-1}{\l}}\\
& = G^{-1} \ol{\b_1^2\b_2^2}(4\l) \b_2^4(\l-1) \b_1^2\ol{\b_2^2\c^2}(\l+1) \sum_{\c'\in\{\c,\c\phi\}} \hF{2}{1}{\ol{\b_1}\b_2\c', \b_1\b_2\c'\phi}{\c^2}{\dfrac{4\l}{(\l+1)^2}}.
\end{align*}
Hence, we obtain (iii) by \eqref{eq.}.
\end{proof}

Recall the formula due to Karlsson (cf. \cite[p. 302 (91)]{Srivastava-Karlsson})
\begin{align*}
& (1-z)^{a_1+a_2+b_1-c} \FBB{a_1, a_2 \, ;\,  b_1, b_2}{c}{z, \dfrac{z}{z-1}}\\
& = \FBB{c-a_2-b_1, a_1+a_2+b_1+b_2-c \, ;\, c-a_1-a_2, a_2}{c}{z, \dfrac{z}{z-1}}.
\end{align*}
An analogue of this formula is the following
\begin{cor}
Suppose that $\c \not\in\{ \a_1\a_2, \b_1\b_2, \a_1\b_2, \a_2\b_1, \a_1\a_2\b_1\b_2\}$ and $\e \not\in\{\a_i, \b_i\, \mid i=1,2\}$.
Then, for $\l \neq 1$, we have
\begin{align*}
& \a_1\a_2\b_1\ol{\c}(1-\l) \FBB{\a_1, \a_2\, ;\, \b_1, \b_2}{\c}{\l, \dfrac{\l}{\l-1}}\\
& = \FBB{\ol{\a_2\b_1}\c, \a_1\a_2\b_1\b_2\ol{\c} \, ;\, \ol{\a_1\a_2}\c, \a_2}{\c}{\l, \dfrac{\l}{\l-1}}.
\end{align*}
\end{cor}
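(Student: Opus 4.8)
The plan is to reduce both sides to the \emph{same} ${}_3F_2$-function by applying Theorem \ref{F3 and 3F2, 2F1} (ii), exploiting the symmetry of $F_3$ on the left-hand side. We may assume $\l \neq 1$; note that Theorem \ref{F3 and 3F2, 2F1} (ii) is already valid at $\l = 0$, so no separate case is needed.

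First I would apply Theorem \ref{F3 and 3F2, 2F1} (ii) directly to the $F_3$ on the right, whose parameters are $\a_1' = \ol{\a_2\b_1}\c$, $\a_2' = \a_1\a_2\b_1\b_2\ol{\c}$, $\b_1' = \ol{\a_1\a_2}\c$, $\b_2' = \a_2$. Using $\a_1'\a_2' = \a_1\b_2$ and $\a_2'\b_1' = \b_1\b_2$, the character products collapse and the resulting ${}_3F_2$ is
\[
\hF{3}{2}{\a_1\b_2\ol{\c},\, \b_1\b_2\ol{\c},\, \ol{\a_2}}{\ol{\a_2}\b_2,\, \a_1\b_1\b_2\ol{\c}}{1-\l},
\]
multiplied by $\ol{\c}(\l)\,\a_1\a_2\b_1\b_2\ol{\c}(1-\l)$ and a Gauss-sum factor. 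On the left I would first use the symmetry $\FBB{\a_1, \a_2\, ;\, \b_1, \b_2}{\c}{x,y} = \FBB{\b_1, \b_2\, ;\, \a_1, \a_2}{\c}{x,y}$ and then apply Theorem \ref{F3 and 3F2, 2F1} (ii); this yields \emph{exactly} the same ${}_3F_2$ (the upper and lower parameter triples/pairs coincide as sets), now multiplied by $\ol{\c}(\l)\,\b_2(1-\l)$ and a Gauss-sum factor.

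It then remains to match the scalar prefactors. After cancelling the common ${}_3F_2$ and the common $\ol{\c}(\l)$, the asserted identity becomes an equality of a power of $(1-\l)$ times a ratio of Gauss sums; the powers agree because $\a_1\a_2\b_1\ol{\c}\cdot\b_2 = \a_1\a_2\b_1\b_2\ol{\c}$, and the two Gauss-sum factors are literally the same product after reordering. Finally I would verify the hypotheses of Theorem \ref{F3 and 3F2, 2F1} (ii) on both sides: the direct application needs $\c \neq \a_2\b_1,\ \a_1\a_2\b_1\b_2,\ \a_1\a_2,\ \a_1\b_2$ together with $\a_2 \neq \e$, and the symmetric application needs $\c \neq \b_1\b_2$ together with $\a_i,\b_i \neq \e$; all of these are guaranteed by the stated assumptions. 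The main (purely bookkeeping) obstacle is tracking the character products; the key idea is that invoking the $F_3$-symmetry on one side forces the two ${}_3F_2$-functions to be identical, so that everything reduces to comparing scalar factors.
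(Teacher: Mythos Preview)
Your proposal is correct and follows essentially the same route as the paper: apply Theorem \ref{F3 and 3F2, 2F1} (ii) to both $F_3$'s, using the $\a\leftrightarrow\b$ symmetry on the left to make the two resulting ${}_3F_2$-functions literally coincide, and then observe that the Gauss-sum prefactors match and the remaining powers of $(1-\l)$ agree since $\a_1\a_2\b_1\ol{\c}\cdot\b_2=\a_1\a_2\b_1\b_2\ol{\c}$. Your verification of the hypotheses of Theorem \ref{F3 and 3F2, 2F1} (ii) on each side is exactly how the stated assumptions on $\c$ and $\a_i,\b_i$ are consumed.
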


\begin{proof}
By Theorem \ref{F3 and 3F2, 2F1} (ii), the two functions
\begin{align*}
\c(\l)\ol{\b_2}(1-\l) \FBB{\b_1, \b_2\, ;\, \a_1, \a_2}{\c}{\l, \dfrac{\l}{\l-1}}
\end{align*}
and
\begin{align*}
\c(\l) \ol{\a_1\a_2\b_1\b_2}\c(1-\l)\FBB{\ol{\a_2\b_1}\c, \a_1\a_2\b_1\b_2\ol{\c} \, ;\, \ol{\a_1\a_2}\c, \a_2}{\c}{\l, \dfrac{\l}{\l-1}}
\end{align*}
are both equal to
$$\dfrac{g(\b_1\b_2\ol{\c}) g(\a_1\b_2\ol{\c})g(\ol{\a_2})}{g(\ol{\c}) g^\circ(\a_1\b_1\b_2\ol{\c})g^\circ(\ol{\a_2}\b_2)}\hF{3}{2}{\b_1\b_2\ol{\c}, \a_1\b_2\ol{\c}, \ol{\a_2}}{\a_1\b_1\b_2\ol{\c}, \ol{\a_2}\b_2}{1-\l}.$$
Thus, we obtain the corollary.
\end{proof}

\subsection{Reduction and transformation formulas for Appell's functions $F_4$ over $\k$}
Over $\C$, Vid\=unas \cite[Theorem 4.3]{Vidunas} proves that the following four functions satisfy the same ordinary differential equation of order 3:
\begin{align*}
& \FCC{a\, ; \, b}{c+\frac{1}{2}, \frac{1}{2}}{z^2, (1-z)^2}, \quad (1-z)\FCC{a+\frac{1}{2} \, ;\, b+\frac{1}{2}}{c+\frac{1}{2}, \frac{3}{2}}{z^2, (1-z)^2},\\
& (1-z)^{-2a} \FCC{a\, ;\, a+\frac{1}{2}}{c+\frac{1}{2}, 1+a-b}{\dfrac{z^2}{(1-z)^2}, \dfrac{1}{(1-z)^2}} \quad \mathrm{and}\quad \hF{3}{2}{2a, 2b, c}{a+b+\frac{1}{2}, 2c}{z}.
\end{align*}
Over $\k$, we have the following formulas.
\begin{thm}\label{F4 reduction}Suppose that $p \neq 2$.
\begin{enumerate}
\item We have
\begin{align*}
\FCC{\a\, ; \, \b}{\c\phi, \phi}{\l^2, (1-\l)^2} = (\a)_\phi (\b)_\phi \FCC{\a\phi\, ;\, \b\phi}{\c\phi, \phi}{\l^2, (1-\l)^2}.
\end{align*}

\item Suppose that $\e \not\in\{\a, \ol{\b}\c\}$.
Then, for $\l \neq 1$, we have
\begin{align*}
& \FCC{\a\, ; \, \b}{\c\phi, \phi}{\l^2, (1-\l)^2} \\
& = \dfrac{g( \ol{\a}\b) g(\a\phi)}{g(\b) g(\phi)} \ol{\a}^2(1-\l) \FCC{\a\, ;\, \a\phi}{\c\phi, \a\ol{\b}}{\dfrac{\l^2}{(1-\l)^2}, \dfrac{1}{(1-\l)^2}}.
\end{align*}

\item Suppose that $\e \not\in\{\a\ol{\b}\phi, \c\}$.
Then, for $\l \neq 1$, we have
\begin{align*}
& \ol{\a}^2(1-\l) \FCC{\a\, ;\, \a\phi}{\c\phi, \a\ol{\b}}{\dfrac{\l^2}{(1-\l)^2}, \dfrac{1}{(1-\l)^2}}\\
& = \dfrac{g(\b)g(\b\phi)}{g(\ol{\a}\b)g^\circ(\a\b\phi)} \hF{3}{2}{\a^2, \b^2, \c}{\a\b\phi, \c^2}{\l}.
\end{align*}
\end{enumerate}
\end{thm}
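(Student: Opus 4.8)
The plan is to handle all three parts through the two sum representations of Lauricella's $F_C$ in Theorem \ref{FA and FC integral rep.}, exploiting the dichotomy between them. Representation (iii) is available only when the first two upper characters are $\a,\a\phi$, and its virtue is that the arguments of the inner ${}_0F_1$'s become perfect squares $\propto t^2$, so Kummer's product formula (Proposition \ref{Kummer second}) applies; representation (iv) applies to a general $F_4$, but its inner arguments $\propto st$ are not squares and Kummer is blocked. Parts (i) and (ii) involve an $F_4$ with general upper characters $\a,\b$, whereas part (iii) and the right-hand side of (ii) are of the special type $\FCC{\a;\a\phi}{\cdots}{\cdots}$. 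I would also record the Bessel/Kloosterman form $\hF{0}{1}{}{\eta}{x}=\frac{-g^\circ(\eta)}{q\,\eta(-1)}\sum_{b\in\k^\times}\psi\!\left(b+\tfrac{x}{b}\right)\ol\eta(b)$, obtained from the definition via \eqref{Gauss sum thm}, together with the inversion $\hF{0}{1}{}{\eta}{x}=\ol\eta(x)\frac{g^\circ(\eta)}{g^\circ(\ol\eta)}\hF{0}{1}{}{\ol\eta}{x}$ that it yields; these will be the tools for part (ii).

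For part (iii) I would argue directly. Applying Theorem \ref{FA and FC integral rep.}(iii) to the $F_4$ on the left, whose lower characters are $\c\phi$ and $\a\ol\b$, produces the inner factors $\hF{0}{1}{}{\c\phi}{\frac{\l^2 t^2}{4(1-\l)^2}}$ and $\hF{0}{1}{}{\a\ol\b}{\frac{t^2}{4(1-\l)^2}}$, both at squares. Rewriting them by Proposition \ref{Kummer second} (with $\a$ replaced by $\c$ and by $\a\ol\b\phi$, which is where $\e\notin\{\c,\a\ol\b\phi\}$ enters) gives $\hF{1}{1}{\c}{\c^2}{\frac{2\l t}{1-\l}}$ and $\hF{1}{1}{\a\ol\b\phi}{(\a\ol\b\phi)^2}{\frac{2t}{1-\l}}$. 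Collecting the three resulting $\psi$-factors into $\psi(\frac{2t}{1-\l})$ and substituting $s=\frac{2t}{1-\l}$, I expand both ${}_1F_1$'s by \eqref{1F1 int.} and perform the $s$-sum, which yields a Gauss sum $-g(\a^2)\ol\a^2(1-\l u-v)$. The substitution $v=(t-1)/t$ turns $1-\l u-v$ into $\tfrac1t(1-\l ut)$, so the remaining double sum over $u,t$ matches the right-hand side of \eqref{3F2 int.} with $(\a_0,\a_1,\a_2;\b_1,\b_2)=(\a^2,\b^2,\c;\a\b\phi,\c^2)$ and produces $\hF{3}{2}{\a^2,\b^2,\c}{\a\b\phi,\c^2}{\l}$. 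Collapsing the Jacobi and Gauss sums by \eqref{J=G}, \eqref{Gauss sum thm} and \eqref{dup. form. gauss}---whereupon the powers of $4$ cancel and the values at $-1$ cancel---leaves exactly the constant $\frac{g(\b)g(\b\phi)}{g(\ol\a\b)g^\circ(\a\b\phi)}$.

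For part (i) the key is a parity property of the second inner factor. Applying Theorem \ref{FA and FC integral rep.}(iv) with $n=2$ to both sides reduces the identity, after matching the prefactors through $(\a)_\phi(\b)_\phi=\frac{g(\a\phi)g(\b\phi)}{g(\a)g(\b)}$, to the equality of $\sum_{s,t}\psi(s+t)\a(s)\b(t)\hF{0}{1}{}{\c\phi}{\l^2 st}\hF{0}{1}{}{\phi}{(1-\l)^2 st}$ and the same sum with $\a(s)\b(t)$ replaced by $\a\phi(s)\b\phi(t)$. In the defining series of $\hF{0}{1}{}{\phi}{y}$ the substitution $\rho\mapsto\rho\phi$ fixes the denominator, because $(\e)_{\rho\phi}^\circ(\phi)_{\rho\phi}^\circ=(\e)_{\rho}^\circ(\phi)_{\rho}^\circ$, and multiplies the summand by $\phi(y)$; hence $\hF{0}{1}{}{\phi}{y}=\phi(y)\hF{0}{1}{}{\phi}{y}$, so this factor vanishes unless $y$ is a nonzero square. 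Thus only terms with $st$ a square survive, and on those $\a\phi(s)\b\phi(t)=\a(s)\b(t)$; the two sums therefore coincide and (i) follows.

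Part (ii) is where the genuine difficulty lies, and it is the step I expect to be the main obstacle. Its hypotheses $\e\notin\{\a,\ol\b\c\}$ do not force $\c\neq\e$ or $\a\ol\b\phi\neq\e$, so Kummer---and hence the representation-(iii) route of part (iii)---is unavailable; moreover the left-hand $F_4$ is not of the type $\FCC{\a;\a\phi}{\cdots}{\cdots}$, so Theorem \ref{FA = FC} does not apply to it either. My intended approach is to write both $F_4$'s by representation (iv); using the square-support of $\hF{0}{1}{}{\phi}{(1-\l)^2 st}$ established in part (i) one may restrict the left-hand double sum to $st$ a square, and then, via the parametrization $(s,t)=(\tau w,\tau/w)$ together with the Bessel form of ${}_0F_1$ and the inversion recorded above, convert the inner $w$-sum into a factor $\hF{0}{1}{}{\a\ol\b}{\tau^2}$ carrying the lower character of the right-hand $F_4$; the surviving $\hF{0}{1}{}{\phi}{(1-\l)^2\tau^2}$ factor, being itself of Bessel type, should then supply the additive character needed to rebuild representation (iii) of the right-hand side. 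Reorganizing this chain of Bessel expansions into the clean single-variable form and extracting the precise constant $\frac{g(\ol\a\b)g(\a\phi)}{g(\b)g(\phi)}$---while controlling the scaling of the additive-character argument introduced by the changes of variables---is the delicate point, and it is the one place where a short, transparent argument is not immediate.
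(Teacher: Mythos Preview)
Your attack on parts (i) and (iii) is sound, though in each case the paper's argument is shorter. For (i) the paper simply substitutes $\mu\mapsto\mu\phi$ in the defining double series of $F_4$: since $(\e)_{\mu\phi}^\circ(\phi)_{\mu\phi}^\circ=(\e)_\mu^\circ(\phi)_\mu^\circ$, this pulls out exactly the factor $(\a)_\phi(\b)_\phi$. Your route via representation (iv) and the square-support of $\hF{0}{1}{}{\phi}{y}$ is correct and is really the same observation seen from the other side. For (iii) you and the paper begin identically (representation (iii) followed by Proposition~\ref{Kummer second}), but the paper then expands the two ${}_1F_1$'s by their \emph{defining series}, evaluates the $t$-sum as a delta-constraint (the orthogonality $\sum_t \a^2\nu^2\mu^2(t)$ forces $\mu=\ol{\a\nu}$ or $\ol{\a\nu}\phi$), and finishes with \eqref{inversion Poch}, \eqref{Poch formula} and \eqref{dup. form. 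Poch}. Your alternative via the integral representations \eqref{1F1 int.} and \eqref{3F2 int.} also works, but note you must separately dispose of the boundary term $1-\l u-v=0$ when $\a^2=\e$ (which your hypotheses do not exclude).

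The genuine gap is part (ii). You correctly diagnose that Kummer and representation (iii) are unavailable on the left-hand side, and you propose an intricate Bessel/Kloosterman rearrangement that you yourself flag as not yet closed. In the paper this part is a one-line consequence of Proposition~\ref{F4 line. trans.}, the linear transformation
\[
\FCC{\a;\b}{\c_1,\c_2}{x,y}=\dfrac{g(\ol\a\b)\,g(\a\ol{\c_2})}{g(\b)\,g(\ol{\c_2})}\,\ol\a(y)\,\FCC{\a;\a\ol{\c_2}}{\c_1,\a\ol\b}{\dfrac{x}{y},\dfrac{1}{y}}
\]
(valid when $\e\notin\{\a,\ol\b\c_1\c_2\}$, proved from the Euler-type sum for $F_4$ by the substitution $s=-u/v$, $t=1/v$). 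Specializing $\c_1=\c\phi$, $\c_2=\phi$, $x=\l^2$, $y=(1-\l)^2$ gives (ii) immediately, with $\ol\a((1-\l)^2)=\ol\a^2(1-\l)$ and $\ol\phi=\phi$. So the missing idea is not a subtle Bessel manipulation but this known inversion-type symmetry of $F_4$; once you have it, (ii) requires no further work.
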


We use the following proposition.
\begin{prop}\label{F4 line. trans.}
Suppose that $\e \not \in \{\a, \ol{\b}\c_1\c_2\}$. 
Then, for any $x \in \k$ and $y \in \k^\times$, we have
\begin{align*}
& \FCC{\a\, ;\, \b}{\c_1, \c_2}{x,y} = \dfrac{g(\ol{\a}\b) g(\a\ol{\c_2})}{g(\b)g(\ol{\c_2})} \ol{\a}(y) \FCC{\a\, ;\, \a\ol{\c_2}}{\c_1, \a\ol{\b}}{\dfrac{x}{y}, \dfrac{1}{y}}.
\end{align*}
\end{prop}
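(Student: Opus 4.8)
The plan is to prove the identity directly from the series definition in Section~2, by expanding both $F_4$'s into their defining double character sums and matching the two sums monomial by monomial after a change of summation variable. In this way the whole statement collapses to a single scalar identity between Gauss sums, and no integral representation is needed.

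First I would expand the right-hand side. Writing $\FCC{\a;\a\ol{\c_2}}{\c_1,\a\ol\b}{x/y,1/y}$ as a sum over $\mu_1,\mu_2\in\khat$ and pulling the factor $\ol\a(y)$ inside, the character part of the general term is $\ol\a(y)\mu_1(x/y)\mu_2(1/y)$. The substitution $\nu_1:=\mu_1$, $\nu_2:=\ol{\a\mu_1\mu_2}$ (equivalently $\mu_2=\ol{\a\nu_1\nu_2}$, which is a bijection of $\khat^2$) turns this into $\nu_1(x)\nu_2(y)$, exactly the monomial occurring in the expansion of the left-hand side $\FCC{\a;\b}{\c_1,\c_2}{x,y}$. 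Under this substitution $\mu_1\mu_2=\ol{\a\nu_2}$, and the factors $(\c_1)_{\mu_1}^\circ(\e)_{\mu_1}^\circ=(\c_1)_{\nu_1}^\circ(\e)_{\nu_1}^\circ$ are common to both sides and cancel.

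It then remains to verify the scalar identity, for every $(\nu_1,\nu_2)$,
\begin{align*}
\frac{(\a)_{\nu_1\nu_2}(\b)_{\nu_1\nu_2}}{(\c_2)_{\nu_2}^\circ(\e)_{\nu_2}^\circ}
= \frac{g(\ol\a\b)g(\a\ol{\c_2})}{g(\b)g(\ol{\c_2})}\cdot
\frac{(\a)_{\ol{\a\nu_2}}\,(\a\ol{\c_2})_{\ol{\a\nu_2}}}{(\a\ol\b)_{\ol{\a\nu_1\nu_2}}^\circ\,(\e)_{\ol{\a\nu_1\nu_2}}^\circ}.
\end{align*}
I would check this by rewriting every Pochhammer symbol as a ratio of Gauss sums through the definitions $(\a)_\nu=g(\a\nu)/g(\a)$ and $(\a)_\nu^\circ=g^\circ(\a\nu)/g^\circ(\a)$ (with $g^\circ(\e)=q$). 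The numerator Pochhammers on the right simplify at once, $(\a)_{\ol{\a\nu_2}}=g(\ol{\nu_2})/g(\a)$ and $(\a\ol{\c_2})_{\ol{\a\nu_2}}=g(\ol{\c_2\nu_2})/g(\a\ol{\c_2})$, and the remaining ``overline'' Gauss sums are cleared using the reflection formula \eqref{Gauss sum thm} (equivalently \eqref{inversion Poch}).

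The main obstacle is the Gauss-sum bookkeeping in this last step. Applying \eqref{Gauss sum thm} to each of $g(\ol\a\b)g^\circ(\a\ol\b)$, $g(\ol{\nu_2})$, $g(\ol{\c_2\nu_2})$, $g^\circ(\ol{\b\nu_1\nu_2})$ and $g^\circ(\ol{\a\nu_1\nu_2})$ produces a product of sign factors $\eta(-1)$ together with powers of $q$; these signs must be shown to collapse, which they do because every surviving contribution is a square (so $\eta(-1)^2=1$) apart from a single $\c_2(-1)$, and that residual sign is absorbed by the final application of \eqref{Gauss sum thm} in the form $g^\circ(\c_2)g(\ol{\c_2})=\c_2(-1)q$. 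Tracking these signs and powers of $q$ and confirming that the prefactor comes out exactly as $g(\ol\a\b)g(\a\ol{\c_2})/(g(\b)g(\ol{\c_2}))$ is routine but error-prone. The boundary case $x=0$ is immediate, since by the convention $\eta(0)=0$ both sides vanish; this completes the proof for all $x\in\k$ and $y\in\k^\times$.
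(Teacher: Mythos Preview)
Your argument is correct and genuinely different from the paper's. The paper proves the identity by appealing to the sum (``integral'') representation of $F_4$ from \cite[Theorem~3.7]{N}: it writes
\[
\dfrac{g(\ol{\c_1}) g(\ol{\c_2}) g(\ol{\b}\c_1\c_2)}{g^\circ(\ol{\b})}\FCC{\a;\b}{\c_1,\c_2}{x,y}
=\sum_{u,v}\ol{\a}\Big(1-\tfrac{x}{u}-\tfrac{y}{v}\Big)\ol{\c_1}(u)\ol{\c_2}(v)\ol{\b}\c_1\c_2(1-u-v),
\]
performs the substitution $s=-u/v$, $t=1/v$, and recognises the result as the same representation for the right-hand $F_4$. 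The hypothesis $\e\notin\{\a,\ol{\b}\c_1\c_2\}$ (and the separate treatment of $x=0$) is there only because this sum representation requires it.

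Your route bypasses that machinery entirely: the bijection $\mu_1=\nu_1$, $\mu_2=\ol{\a\nu_1\nu_2}$ on $(\khat)^2$ matches the monomials $\nu_1(x)\nu_2(y)$ exactly, and the residual coefficient identity is, after expanding all Pochhammer symbols, just the statement that
\[
\big[g(\a\nu_1\nu_2)g^\circ(\ol{\a\nu_1\nu_2})\big]\big[g(\b\nu_1\nu_2)g^\circ(\ol{\b\nu_1\nu_2})\big]\big[g^\circ(\c_2)g(\ol{\c_2})\big]
=\big[g(\ol{\a}\b)g^\circ(\a\ol{\b})\big]\big[g(\ol{\nu_2})g^\circ(\nu_2)\big]\big[g(\ol{\c_2\nu_2})g^\circ(\c_2\nu_2)\big],
\]
and both sides equal $\a\b\c_2(-1)q^3$ by six applications of \eqref{Gauss sum thm}. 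So your bookkeeping does close. A pleasant by-product is that your argument never uses the hypothesis $\e\notin\{\a,\ol{\b}\c_1\c_2\}$: the termwise identity holds for all characters, so you have actually proved the proposition in greater generality than stated. The paper's approach, on the other hand, is more conceptual (a visible change of variable in an Euler-type sum) and fits the theme of the section, at the cost of the extra assumptions inherited from the integral representation.
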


\begin{proof}
When $x=0$ it is clear.
When $x\neq 0$, the author \cite[Theorem 3.7]{N} proves that, if $\e \not \in \{\a, \ol{\b}\c_1\c_2\}$, then
\begin{align*}
& \dfrac{g(\ol{\c_1}) g(\ol{\c_2}) g(\ol{\b}\c_1\c_2)}{g^\circ(\ol{\b})}\FCC{\a\, ;\, \b}{\c_1, \c_2}{x,y}\\
& = \sum_{u,v\in\k^\times} \ol{\a} \Big(1-\frac{x}{u}-\frac{y}{v}\Big) \ol{\c_1}(u)\ol{\c_2}(v) \ol{\b}\c_1\c_2( 1-u-v ).
\end{align*}
Putting $s = -u/v$ and $t = 1/v$ and using the formula above again, the right-hand side above is equal to
\begin{align*}
& \a\b\c_2(-1) \ol{\a}(y) \sum_{s,t} \ol{\a} \Big( 1 - \frac{x}{ys} - \frac{1}{yt}\Big) \ol{\c_1}(s) \ol{\a}\b(t) \ol{\b}\c_1\c_2(1-s-t)\\
& = \a\b\c_2(-1) \ol{\a}(y) \dfrac{g(\ol{\c_1}) g(\ol{\a}\b) g(\ol{\b}\c_1\c_2)}{g^\circ(\ol{\a}\c_2)} \FCC{\a\, ;\, \a\ol{\c_2}}{\c_1, \a\ol{\b}}{\dfrac{x}{y}, \dfrac{1}{y}}. 
\end{align*}
Thus, we obtain the proposition by \eqref{Gauss sum thm}.
\end{proof}

\begin{proof}[Proof of Theorem \ref{F4 reduction}]
By putting $\eta =\mu\phi$ and \eqref{Poch formula}, 
\begin{align*}
\FCC{\a\, ; \, \b}{\c\phi, \phi}{\l^2, (1-\l)^2} 
& = \dfrac{1}{(1-q)^2}\sum_{\nu, \mu}\dfrac{(\a)_{\nu\mu} (\b)_{\nu\mu}}{(\e)_\nu^\circ(\c\phi)_\nu^\circ (\e)_\mu^\circ (\phi)_\mu^\circ}\nu^2(\l)\mu^2(1-\l)\\
& = \dfrac{1}{(1-q)^2}\sum_{\nu, \eta}\dfrac{(\a)_{\nu\eta\phi} (\b)_{\nu\eta\phi}}{(\e)_\nu^\circ(\c\phi)_\nu^\circ (\e)_{\eta\phi}^\circ (\phi)_{\eta\phi}^\circ}\nu^2(\l)\eta^2(1-\l)\\
& = \dfrac{(\a)_\phi (\b)_\phi}{(\e)_\phi^\circ (\phi)_\phi^\circ} \FCC{\a\phi\, ;\, \b\phi}{\c\phi, \phi}{\l^2, (1-\l)^2}.
\end{align*}
Noting that $(\e)_\phi^\circ (\phi)_\phi^\circ=1$, we obtain (i).
(ii) can be easily shown by Proposition \ref{F4 line. trans.}.

By Theorem \ref{FA and FC integral rep.} (iii) and Proposition \ref{Kummer second}, we have
\begin{align*}
& \ol{\a}^2(1-\l)\FCC{\a\, ;\, \a\phi}{\c\phi, \a\ol{\b}}{\dfrac{\l^2}{(1-\l)^2}, \dfrac{1}{(1-\l)^2}}\\
& = -\dfrac{\ol{\a}^2(1-\l)}{g(\a^2)} \sum_t \a^2(t) \hF{1}{1}{\c}{\c^2}{-\dfrac{2\l t}{1-\l}} \hF{1}{1}{\a\ol{\b}\phi}{\a^2\ol{\b}^2}{\dfrac{2t}{1-\l}}\\
& = -\dfrac{\ol{\a}^2(1-\l)}{(1-q)^2 g(\a^2)} \sum_{\nu,\mu} \dfrac{(\c)_\nu}{(\e)^\circ_\nu (\c^2)_\nu^\circ}\cdot \dfrac{(\a\ol{\b}\phi)_\mu}{ (\e)_\mu^\circ (\a^2\ol{\b}^2)_\mu^\circ}\nu(-\l)\ol{\nu\mu}\Big(\dfrac{1-\l}{2}\Big) \sum_t \a^2\nu\mu(t)\\
& = \dfrac{\ol{\a}(4)}{(1-q) g(\a^2)} \sum_\nu \dfrac{(\c)_\nu}{(\e)_\nu^\circ (\c^2)_\nu^\circ } \cdot \dfrac{(\a\ol{\b}\phi)_{\ol{\a^2\nu}}}{(\e)_{\ol{\a^2\nu}}^\circ (\a^2\ol{\b}^2)_{\ol{\a^2\nu}}^\circ}\nu(-\l).
\end{align*}
Using \eqref{inversion Poch} and \eqref{Poch formula}, the right-hand side above is equal to
\begin{align*}
& \dfrac{\ol{\a}(4)}{(1-q) g(\a^2)} \sum_\nu \dfrac{ (\c)_\nu }{(\e)_\nu^\circ (\c^2)_\nu^\circ } \cdot\dfrac{ (\e)_{\a^2\nu} (\ol{\a}^2\b^2)_{\a^2\nu} }{ (\ol{\a}\b\phi)_{\a^2\nu}^\circ}\nu(\l)\\
& = \dfrac{(\ol{\a}^2\b^2)_{\a^2}}{(\ol{\a}\b\phi)_{\a^2}^\circ}\ol{\a}(4) \hF{3}{2}{\a^2, \b^2, \c}{\a\b\phi, \c^2}{\l}.
\end{align*}
Thus, we obtain (iii) by applying the duplication formula \eqref{dup. form. Poch} to $(\ol\a^2\b^2)_{\a^2}$.
\end{proof}

Recall the formula due to Appell-Kamp\'e de F\'eriet (\cite[p. 303 (97)]{Srivastava-Karlsson}):
\begin{align*}
\FCC{a\, ;\, a-b+\frac{1}{2}}{c, b+\frac{1}{2}}{x, y^2} = (1+y)^{-2a} \FAA{a\, ;\, a-b+\frac{1}{2}, b}{c, 2b}{\dfrac{x}{(1+y)^2}, \dfrac{4y}{(1+y)^2}}.
\end{align*}
The following is a finite field analogue of this formula.
\begin{thm}\label{analogue of Appell-Kampe}
Suppose that $p \neq 2$ and $\b^2\neq\e$. 
Then, for $x\in\k$ and $y \in \k-\{\pm 1\}$, we have
\begin{align*}
& \FCC{\a\, ;\, \a\ol{\b}\phi}{\c, \b\phi}{x, y^2} = \ol{\a}^2(1+y) \FAA{\a \, ;\, \a\ol{\b}\phi, \b}{\c, \b^2}{\dfrac{x}{(1+y)^2}, \dfrac{4y}{(1+y)^2}}.
\end{align*}

\end{thm}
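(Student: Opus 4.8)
The plan is to expand the $F_4$ on the left as its defining double character sum, to isolate the summation over the index attached to the variable $y^2$ as a one‑variable ${}_2F_1$, to apply a quadratic transformation to that inner ${}_2F_1$, and finally to reassemble the result as the $F_2$ on the right. First I would write, from the definition of $F_4$ and $\mu(y^2)=\mu^2(y)$,
\[
\FCC{\a\, ;\, \a\ol{\b}\phi}{\c, \b\phi}{x, y^2}=\dfrac{1}{(1-q)^2}\sum_{\nu,\mu\in\khat}\dfrac{(\a)_{\nu\mu}(\a\ol{\b}\phi)_{\nu\mu}}{(\c)_\nu^\circ (\b\phi)_\mu^\circ (\e)_\nu^\circ (\e)_\mu^\circ}\nu(x)\mu(y^2).
\]
Splitting the Pochhammer symbols by \eqref{Poch formula}, namely $(\a)_{\nu\mu}=(\a)_\nu(\a\nu)_\mu$ and $(\a\ol{\b}\phi)_{\nu\mu}=(\a\ol{\b}\phi)_\nu(\a\ol{\b}\phi\nu)_\mu$, the inner sum over $\mu$ is exactly $\hF{2}{1}{\a\nu, \a\ol{\b}\phi\nu}{\b\phi}{y^2}$, so that
\[
\FCC{\a\, ;\, \a\ol{\b}\phi}{\c, \b\phi}{x, y^2}=\dfrac{1}{1-q}\sum_{\nu\in\khat}\dfrac{(\a)_\nu(\a\ol{\b}\phi)_\nu}{(\c)_\nu^\circ(\e)_\nu^\circ}\nu(x)\,\hF{2}{1}{\a\nu, \a\ol{\b}\phi\nu}{\b\phi}{y^2}.
\]

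The heart of the matter is then the one‑variable quadratic transformation
\[
\hF{2}{1}{A, A\ol{\b}\phi}{\b\phi}{y^2}=\ol{A}^2(1+y)\,\hF{2}{1}{A, \b}{\b^2}{\dfrac{4y}{(1+y)^2}},\tag{$\star$}
\]
applied with $A=\a\nu$; this is the one‑variable shadow of the theorem and the finite field analogue of the classical quadratic transformation ${}_2F_1(A,A-b+\tfrac12;b+\tfrac12;y^2)=(1+y)^{-2A}{}_2F_1(A,b;2b;\tfrac{4y}{(1+y)^2})$. Granting $(\star)$, I would substitute it into the last display, write $\ol{\a\nu}^2(1+y)=\ol{\a}^2(1+y)\ol{\nu}^2(1+y)$, and absorb $\ol{\nu}^2(1+y)$ into $\nu(x)$ to produce $\nu\big(x/(1+y)^2\big)$. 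Re‑expanding the resulting $\hF{2}{1}{\a\nu,\b}{\b^2}{\cdot}$ as a sum over $\mu$ and recombining $(\a)_\nu(\a\nu)_\mu=(\a)_{\nu\mu}$ by \eqref{Poch formula}, the double sum becomes
\[
\ol{\a}^2(1+y)\,\dfrac{1}{(1-q)^2}\sum_{\nu,\mu}\dfrac{(\a)_{\nu\mu}(\a\ol{\b}\phi)_\nu(\b)_\mu}{(\c)_\nu^\circ(\b^2)_\mu^\circ(\e)_\nu^\circ(\e)_\mu^\circ}\nu\Big(\dfrac{x}{(1+y)^2}\Big)\mu\Big(\dfrac{4y}{(1+y)^2}\Big),
\]
which is precisely $\ol{\a}^2(1+y)\FAA{\a\, ;\, \a\ol{\b}\phi, \b}{\c, \b^2}{\frac{x}{(1+y)^2}, \frac{4y}{(1+y)^2}}$, as desired. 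Thus the whole theorem reduces to the single identity $(\star)$.

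It remains to prove $(\star)$, and this is the main obstacle. The confluent route that makes the Bailey formula (Theorem \ref{FA = FC}) so short is unavailable here: the second upper parameter $\a\ol{\b}\phi$ is not of the form $\a\phi$, so Theorem \ref{FA and FC integral rep.} (iii) does not apply, and applying (iv) directly to the $F_4$ forces the factor $\hF{0}{1}{}{\b\phi}{y^2st}$, to which Proposition \ref{Kummer second} cannot be applied since $st$ need not be a square. Likewise, applying Theorem \ref{FA and FC integral rep.} (i) and Proposition \ref{Kummer second} to the right‑hand side of $(\star)$ leaves an unremovable factor $\psi\big(\rho\,\tfrac{1+y^2}{(1+y)^2}\big)$, so a genuinely different idea is needed.

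Instead I would prove $(\star)$ through the Euler‑type integral representation of ${}_2F_1$ (the $n=1$ specialization of \eqref{FA int.}), which writes each side as a one‑variable character sum, and then pass between the two sums by a quadratic change of variables. Concretely, the substitution $v=\tfrac{(1+y)^2w}{(1+yw)^2}$ turns $1-\tfrac{4yv}{(1+y)^2}$ into the perfect square $\big(\tfrac{1-yw}{1+yw}\big)^2$ and satisfies $1-v=\tfrac{(1-w)(1-y^2w)}{(1+yw)^2}$, so the factor $1-y^2w$ appears, matching the $1-y^2u$ present in the Euler integral for the left‑hand ${}_2F_1$ at argument $y^2$. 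The essential mechanism is that such a map is two‑to‑one, so the number of preimages of a point equals $1+\phi(\,\cdot\,)$; this is exactly what manufactures the quadratic character $\phi$ responsible for the shift $\b\phi\rightsquigarrow\b^2$ in the lower parameter and for the extra $\phi$ in the upper parameter $A\ol{\b}\phi$. The remaining work — collecting the resulting Gauss and Jacobi sums via \eqref{J=G}, \eqref{Gauss sum thm} and the duplication formula \eqref{dup. form. gauss} to produce the prefactor $\ol{A}^2(1+y)$, and separately disposing of the finitely many degenerate characters $A=\a\nu$ for which the integral representation degenerates — is where the real computation lies, and is the step I expect to be the most delicate.
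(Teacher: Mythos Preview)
Your reduction is exactly the paper's proof: expand $F_4$ as a $\nu$-sum, recognize the inner $\mu$-sum as $\hF{2}{1}{\a\nu,\a\nu\ol{\b}\phi}{\b\phi}{y^2}$, apply the quadratic transformation $(\star)$, and reassemble into $F_2$. The identity $(\star)$ is precisely the paper's Lemma \ref{Gauss quad.}; rather than deriving it via an Euler integral and a two-to-one substitution as you outline, the paper simply invokes Otsubo's known result \cite[Theorem 5.4]{Otsubo} for generic $A$ and then disposes of the three degenerate values $A\in\{\e,\b\phi,\b^2\}$ (needed since $A=\a\nu$ ranges over all of $\khat$) by short direct computations, so the step you flag as ``most delicate'' is in fact already in the literature.
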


We use the following lemma, which is a slightly generalization of Otsubo's formula \cite[Theorem 5.4]{Otsubo}.
\begin{lem}\label{Gauss quad.} If $\l \neq \pm 1$ and $\b^2 \neq \e$, then
\begin{equation*}
\hF{2}{1}{\a, \a\ol{\b}\phi}{\b\phi}{\l^2} = \ol{\a}^2(1+\l) \hF{2}{1}{\a, \b}{\b^2}{\dfrac{4\l}{(1+\l)^2}},
\end{equation*}
\end{lem}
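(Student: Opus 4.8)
Lemma~\ref{Gauss quad.}: if $\l \neq \pm 1$ and $\b^2 \neq \e$, then
\begin{equation*}
\hF{2}{1}{\a, \a\ol{\b}\phi}{\b\phi}{\l^2} = \ol{\a}^2(1+\l) \hF{2}{1}{\a, \b}{\b^2}{\dfrac{4\l}{(1+\l)^2}}.
\end{equation*}

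The plan is to realize this as the $n=1$ case of the Bailey-type identity already established in Theorem~\ref{FA = FC}, using the fact that both sides of a one-variable $F_C^{(1)}$ and $F_A^{(1)}$ collapse to ordinary ${}_2F_1$'s. Indeed, setting $n=1$ in the definitions, $\FC{1}{\a\,;\,\a\phi}{\b'}{\l^2}$ and $\FA{1}{\a^2\,;\,\b}{\b^2}{z}$ are each single-index sums over $\nu \in \khat$, which are by definition the one-variable hypergeometric functions. So first I would take Theorem~\ref{FA = FC} with $n=1$ (valid since $p\neq 2$, $\b\neq\e$, and $\l\neq -1$), read off the identity as a relation between $\hFF{\a}{\a\ol{\b}\phi}{\b\phi}{\l^2}$-type and $\hFF{\a^2}{\b}{\b^2}{\frac{2\l}{1+\l}}$-type functions, and then see whether a reindexing of the single summation variable $\nu \mapsto \nu$ or $\nu^2$, together with the duplication formula \eqref{dup. form. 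Poch}, converts the $F_A$-side argument $\tfrac{2\l}{1+\l}$ and parameters $(\a^2;\b;\b^2)$ into the desired $(\a;\b;\b^2)$ with argument $\tfrac{4\l}{(1+\l)^2}$.

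Alternatively, and probably more cleanly, I would give a direct proof paralleling the proof of Theorem~\ref{FA = FC}: expand the left side by definition as $\tfrac{1}{1-q}\sum_\nu \tfrac{(\a)_\nu(\a\ol{\b}\phi)_\nu}{(\e)_\nu^\circ(\b\phi)_\nu^\circ}\nu(\l^2)$, restrict to the even-index part using the standard device $\sum_\nu F(\nu^2)\nu(\l) = \tfrac12\sum_\nu(1+\nu(-1))F(\nu)\nu(\l)$ already used in the proof of Theorem~3.\ref{F2 = 2F1}(i), and apply the duplication formulas \eqref{dup. form. Poch} to consolidate the products of Pochhammer symbols. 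The key manipulation will be rewriting $(\a)_{\nu}(\a\ol{\b}\phi)_\nu$ and $(\b\phi)_\nu^\circ$ via the duplication identity so that the summand becomes $\tfrac{(\a^2)_{\nu^2}\cdots}{\cdots}$ evaluated at the squared argument, matching the $F_C^{(1)}$-to-$F_A^{(1)}$ reduction; the factor $\nu(4)$ produced by \eqref{dup. form. Poch} is exactly what creates the $4$ in the argument $\tfrac{4\l}{(1+\l)^2}$, and the prefactor $\ol{\a}^2(1+\l)$ tracks the substitution $s = t(1+\l)$ just as in Theorem~\ref{FA = FC}.

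I expect the main obstacle to be bookkeeping rather than conceptual: matching the parameter shift $\a \mapsto \a^2$, $\b\phi \mapsto \b^2$ and the argument $\tfrac{2\l}{1+\l}$ versus $\tfrac{4\l}{(1+\l)^2}$ against the two different base points (the left side is a ${}_2F_1$ at $\l^2$, the right at $\tfrac{4\l}{(1+\l)^2}$) forces careful tracking of which character gets squared and of the characters $\ol{\a}^2(1+\l)$ versus $\ol{\a}(4)$-type factors coming out of \eqref{dup. form. gauss}/\eqref{dup. form. Poch}. Since the statement is flagged as a "slight generalization" of Otsubo~\cite[Theorem 5.4]{Otsubo}, the cleanest writeup is likely to cite that result for the core quadratic transformation and then account for the single extra parameter degree of freedom (the character $\b$ being free rather than fixed), verifying the hypotheses $\l\neq\pm1$ and $\b^2\neq\e$ guarantee all the Gauss-sum denominators are nonzero so that the duplication formulas apply.
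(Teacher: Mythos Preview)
Your proposal misidentifies what the ``slight generalization'' of \cite[Theorem~5.4]{Otsubo} actually is. Otsubo's result already allows $\b$ to be an arbitrary nontrivial character; the extra hypothesis he imposes is $\a\not\in\{\e,\b\phi,\b^2\}$. The content of Lemma~\ref{Gauss quad.} beyond Otsubo is precisely to remove this restriction on $\a$. The paper's proof accordingly cites Otsubo for the generic case and then separately verifies the three exceptional cases $\a=\e$, $\a=\b^2$, and $\a=\b\phi$ by evaluating both sides explicitly (using \eqref{1F0 int.}, \eqref{dup. form. gauss}, \eqref{J=G}, etc.). Your plan of ``citing Otsubo and checking the hypotheses $\l\neq\pm1$, $\b^2\neq\e$ make the denominators nonzero'' would simply miss these three cases, which is the entire point of the lemma.

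Your alternative routes do not close this gap either. The $n=1$ case of Theorem~\ref{FA = FC} yields
\[
\hF{2}{1}{\a,\a\phi}{\b\phi}{\l^2}=\ol\a^2(1+\l)\,\hF{2}{1}{\a^2,\b}{\b^2}{\tfrac{2\l}{1+\l}},
\]
which has second upper parameter $\a\phi$ (not $\a\ol\b\phi$), right-hand upper parameter $\a^2$ (not $\a$), and argument $\tfrac{2\l}{1+\l}$ (not $\tfrac{4\l}{(1+\l)^2}$); bridging to the lemma from here requires another quadratic transformation of the same depth as the lemma itself. For the direct approach, the summand $\tfrac{(\a)_\nu(\a\ol\b\phi)_\nu}{(\e)_\nu^\circ(\b\phi)_\nu^\circ}$ is not of the form $G(\nu^2)$, so the ``restrict to the even part'' device and the duplication formula \eqref{dup. form. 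Poch} do not collapse it in the way you suggest; this is why Otsubo's original proof is nontrivial. The honest structure of the argument is: quote Otsubo for $\a\not\in\{\e,\b\phi,\b^2\}$, and then do the three case checks by hand.
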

\begin{proof}
When $\a \not\in\{\e, \b\phi, \b^2\}$ and $\b\neq\e$, Otsubo \cite[Theorem 5.4]{Otsubo} proves the formula. 
We see that the formula is true even if $\a \in \{\e, \b\phi, \b^2\}$, when $\l \neq \pm 1$ and $\b^2 \neq \e$ (i.e. $\e\not\in\{\b,\b\phi\}$).
Indeed, when $\a =\e$,
\begin{align*}
\hF{2}{1}{\a, \a\ol{\b}\phi}{\b\phi}{\l^2} 
& = \dfrac{q}{1-q} \sum_\nu q^{-\d(\nu)}\dfrac{(\ol\b\phi)_\nu}{(\b\phi)_\nu^\circ}\nu(\l^2)\\
& = \dfrac{q}{1-q} \sum_\nu \dfrac{(\ol\b\phi)_\nu}{(\b\phi)_\nu^\circ}\nu(\l^2) + 1.
\end{align*}
Taking $\nu=\ol\b\phi\mu$ and using \eqref{Poch formula}, \eqref{1F0 int.}, \eqref{dup. form. gauss} and \eqref{J=G}, we have
\begin{align*}
\dfrac{q}{1-q} \sum_\nu \dfrac{(\ol\b\phi)_\nu}{(\b\phi)_\nu^\circ}\nu(\l^2)
& = \dfrac{q}{1-q} \sum_\mu \dfrac{(\ol\b\phi)_{\ol\b\phi\mu}}{(\b\phi)_{\ol\b\phi\mu}^\circ}\ol\b\phi\mu(\l^2)\\
& = q\dfrac{(\ol\b\phi)_{\ol\b\phi}}{(\b\phi)_{\ol\b\phi}^\circ}\ol\b^2(\l)\hF{1}{0}{\ol\b^2}{}{\l^2}\\
& = \dfrac{g(\ol\b^2) g(\b\phi)}{g(\ol\b\phi)} \ol{\b}^2(\l)\ol\b^2(1-\l^2)\\
& = j(\ol{\b}, \b\phi) \ol{\b}^2(2\l) \b^2(1-\l^2).
\end{align*}
On the other hand, when $\a=\e$, we similarly have
\begin{align*}
\ol{\a}^2(1+\l) \hF{2}{1}{\a, \b}{\b^2}{\dfrac{4\l}{(1+\l)^2}}
& = q\dfrac{(\b)_{\ol\b^2}}{(\b^2)_{\ol\b^2}^\circ}\ol\b^2\Big(\dfrac{4\l}{(1+\l)^2}\Big) \hF{1}{0}{\ol\b}{}{\dfrac{4\l}{(1+\l)^2}}+1\\
& = \dfrac{g(\ol\b)g(\b^2)}{g(\b)}\ol\b^2(4\l)\b^2(1-\l^2)+1\\
& = j(\ol{\b}, \b\phi) \ol{\b}^2(2\l) \b^2(1-\l^2) + 1.
\end{align*}
Thus, the both sides of the lemma are equal to $j(\ol{\b}, \b\phi) \ol{\b}^2(2\l) \b^2(1-\l^2) + 1$.

When $\a = \b^2$, using \eqref{dup. form. gauss} and \eqref{J=G}, we have 
\begin{align*}
\hF{2}{1}{\a, \a\ol{\b}\phi}{\b\phi}{\l^2} 
& = \dfrac{1}{1-q}\sum_\nu q^{-(\b\phi\nu)}\dfrac{(\b^2)_\nu}{(\e)_\nu^\circ}\nu(\l^2)\\
& = \hF{1}{0}{\b^2}{}{\l^2} + \dfrac{1}{q}\cdot \dfrac{(\b^2)_{\ol\b\phi}}{(\e)^\circ_{\ol\b\phi}}\ol\b^2(\l)\\
& = \ol\b^2(1-\l^2) + \dfrac{g(\b\phi)}{g(\b^2)g^\circ(\ol\b\phi)}\ol\b^2(\l)\\
& =  \ol\b^2(1-\l^2) + j(\b,\ol\b\phi)^{-1}\ol\b^2(2\l).
\end{align*}
Similarly, we can also check that the right-hand side of the lemma is equal to $\ol\b^2(1-\l^2) + j(\b,\ol\b\phi)^{-1}\ol\b^2(2\l)$ when $\a = \b^2$.

When $\a = \b\phi$, we have (note that $\a \neq \e$ and $\l \neq \pm 1$ by assumptions)
\begin{align*}
\hF{2}{1}{\a, \a\ol{\b}\phi}{\b\phi}{\l^2}
& = \dfrac{1}{1-q} \sum_\nu q^{1-\d(\nu)-\d(\a\nu)} \nu(\l^2) \\
& = \dfrac{1}{1-q} \sum_\nu \nu(\l^2) + 1+\ol\a^2(\l)
= 1 + \ol\a^2(\l).
\end{align*}
On the other hand, using \eqref{dup. form. gauss}, \eqref{Gauss sum thm} and \eqref{J=G}, we have (when $\a=\b\phi$)
\small
\begin{align*}
\hF{2}{1}{\a, \b}{\b^2}{\dfrac{4\l}{(1+\l)^2}} 
& = \dfrac{1}{1-q} \sum_\nu \dfrac{(\a^2)_{\nu^2}}{(\e)_\nu^\circ (\a^2)_\nu^\circ} \nu\Big(\dfrac{\l}{(1+\l)^2} \Big)\\
& = \dfrac{1}{1-q} \sum_\nu j(\ol{\a^2\nu},\ol\nu) \nu\Big(\dfrac{\l}{(1+\l)^2} \Big)\\
& = \dfrac{-1}{1-q} \sum_t \ol\a^2(t) \sum_\nu \ol\nu\Big( \dfrac{t(1-t)(1+\l)^2}{\l}\Big)
 = \a^2(1+\l)\big(1+\ol\a^2(\l)\big).
\end{align*}
\normalsize
Here, note that $\dfrac{t(1-t)(1+\l)^2}{\l}=1$ if and only if $t = 1/(1+\l)$ or $t = \l/(1+\l)$.
Thus, the right-hand side of the lemma is equal to $1 + \ol\a^2(\l)$ when $\a = \b\phi$, and hence, we obtain the lemma. 
\end{proof}

\begin{proof}[Proof of Theorem \ref{analogue of Appell-Kampe}]
By \eqref{Poch formula} and \eqref{Gauss quad.}, we have
\begin{align*}
 & \FCC{\a\, ; \, \a\ol{\b}\phi}{\c, \b\phi}{x, y^2} \\
 & = \frac{1}{1-q} \sum_{\nu} \frac{(\a)_\nu (\a\ol{\b}\phi)_\nu}{(\e)_\nu^\circ (\c)_\nu^\circ}\nu(x) \hF{2}{1}{\a\nu, \a\nu\ol{\b}\phi}{\b\phi}{y^2}\\
 & = \ol{\a}^2(1+y) \frac{1}{1-q} \sum_{\nu} \frac{(\a)_\nu (\a\ol{\b}\phi)_\nu}{(\e)_\nu^\circ (\c)_\nu^\circ}\nu\Big( \frac{x}{(1+y)^2}\Big) \hF{2}{1}{\a\nu, \b}{\b^2}{\dfrac{4y}{(1+y)^2}}\\
 & = \ol{\a}^2(1+y) \frac{1}{(1-q)^2} \sum_{\nu,\mu} \frac{(\a)_\nu(\a\nu)_\mu (\a\ol{\b}\phi)_\nu (\b)_\mu}{(\e)_\nu^\circ (\c)_\nu^\circ (\e)_\mu^\circ(\b^2)_\mu^\circ} \nu\Big( \frac{x}{(1+y)^2}\Big) \mu\Big( \dfrac{4y}{(1+y)^2} \Big).
 \end{align*}
Thus, we obtain the theorem by applying \eqref{Poch formula} to $(\a)_\nu(\a\nu)_\mu$.
\end{proof}

\subsection{Formulas between Appell's functions and ${}_4F_3$-functions over $\k$}
Over $\C$, the following identities are known due to Bailey, Burchnall and Srivastava (cf. \cite[(4.4), (4.5) and (4.3)]{Bailey} and \cite[(8) and (9)]{I Srivastava}):
\begin{align*}
& \FAA{2a\, ;\, 2b, 2c}{4b, 4c}{z, -z} = \hF{4}{3}{a, a+\frac{1}{2}, b+c, b+c+\frac{1}{2}}{2b+\frac{1}{2}, 2c+\frac{1}{2}, 2b+2c}{z^2},\\
& \FAA{2a\, ;\, b, b}{2c, 2c}{z, -z} = \hF{4}{3}{a, a+\frac{1}{2}, b, 2c-b}{2c, c, c+\frac{1}{2}}{z^2},\\
& \FBB{2a, 2a\, ;\, 2b, 2b}{2c}{z, -z} = \hF{4}{3}{a+b, a+b+\frac{1}{2}, 2a, 2b}{2a+2b, c, c+\frac{1}{2}}{z^2},\\
& \FCC{a\, ;\, b}{2c_1, 2c_2}{z,z} = \hF{4}{3}{a, b, c_1+c_2, c_1+c_2-\frac{1}{2}}{2c_1,2c_2,2c_1+2c_2-1}{4z},
\end{align*}
and
$$\FCC{2a\, ;\, 2b}{2c,2c}{z,-z} = \hF{4}{3}{a, a+\frac{1}{2}, b, b+\frac{1}{2}}{2c, c, c+\frac{1}{2}}{-4z^2}.$$
Tripathi-Barman \cite{TB2} give finite field analogues of them (their functions $F_1^*$--$F_4^*$ over $\k$ coincide with our $F_1$--$F_4$, respectively, by similar observation as in \cite[Remark 2.13 (iv)]{Otsubo}).
In this subsection, we give short proofs of the analogues under weaker conditions by using Theorem \ref{FA and FC integral rep.} and product formulas for ${}_1F_1$, ${}_2F_0$ and ${}_0F_1$ due to Otsubo \cite{Otsubo} and Otsubo-Senoue \cite{Otsubo2}.
Furthermore, we obtain some additional formulas between Appell's functions and ${}_4F_3$-functions over $\k$.

\begin{thm}[{\cite[Corollary 1.3, Theorems 1.4, 1.5, 1.7 and 1.6]{TB2}}]\label{TB formula}
Suppose that $p \neq 2$.
\begin{enumerate}
\item Suppose that $\e \not\in \{ \b^2, \c^2, \b^2\c^2, \b^2\ol{\c}^2 \}$.
We have
$$\FAA{\a^2\, ;\, \b^2, \c^2}{\b^4, \c^4}{\l, -\l} = \hF{4}{3}{\a,\a\phi, \b\c, \b\c\phi}{\b^2\phi, \c^2\phi, \b^2\c^2}{\l^2}.$$

\item Suppose that $\b \not\in\{ \e, \c, \c\phi, \c^2 \}$.
We have
$$\FAA{\a^2\, ;\, \b,\b}{\c^2, \c^2}{\l, -\l} = \hF{4}{3}{\a, \a\phi, \b, \ol{\b}\c^2}{\c^2, \c, \c\phi}{\l^2}.$$

\item Suppose that $\e \not\in \{\a^2, \b^2, \a^2\b^2\}$.
We have
$$\FBB{\a^2, \a^2\, ;\, \b^2, \b^2}{\c^2}{\l, -\l} = \hF{4}{3}{\a\b, \a\b\phi, \a^2, \b^2}{\a^2\b^2, \c, \c\phi}{\l^2}.$$

\item Suppose that $\e \not\in\{ \c_1^2\c_2^2, \c_1^2\ol{\c_2^2} \}$.
We have
$$\FCC{\a\, ;\, \b}{\c_1^2, \c_2^2}{\l, \l} = \hF{4}{3}{\a,\b,\c_1\c_2, \c_1\c_2\phi}{\c_1^2, \c_2^2, \c_1^2\c_2^2}{4\l}.$$

\item For any $\a,\b,\c \in \khat$, we have
$$\FCC{\a^2\, ;\, \b^2}{\c^2, \c^2}{\l,-\l} = \hF{4}{3}{\a,\a\phi,\b,\b\phi}{\c^2, \c, \c\phi}{-4\l^2}.$$
\end{enumerate}
\end{thm}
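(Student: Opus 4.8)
The plan is to prove all five identities by a single mechanism. First I would rewrite the Appell function on the left as a character sum, over one or two auxiliary variables, of a product of confluent hypergeometric functions by applying Theorem \ref{FA and FC integral rep.}; then collapse that product into a single sum over $\nu\in\khat$ by invoking a product formula for confluent functions due to Otsubo \cite{Otsubo} and Otsubo--Senoue \cite{Otsubo2}; and finally carry out the auxiliary summation using the definition of the Gauss sum together with the duplication formulas \eqref{dup. form. gauss} and \eqref{dup. form. Poch}. Concretely, I would use part (i) of Theorem \ref{FA and FC integral rep.} for the $F_2$-identities (i) and (ii), part (ii) for the $F_3$-identity (iii), and part (iv) for the $F_4$-identities (iv) and (v). In each case the parameters of the Appell function on the left are precisely those for which the two inner confluent factors share a common argument or differ only by a sign, which is exactly the shape required by the relevant product formula.

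For (i), after Theorem \ref{FA and FC integral rep.} (i) the summand is $\psi(t)\,\a^2(t)\,\hF{1}{1}{\b^2}{\b^4}{\l t}\,\hF{1}{1}{\c^2}{\c^4}{-\l t}$, and I would first apply Proposition \ref{Kummer second} (with $\a=\b^2$ and with $\a=\c^2$) to replace each ${}_1F_1$ by an ${}_0F_1$; the accompanying factors $\psi(\pm\l t/2)$ cancel because the arguments are opposite, and then the product formula for two ${}_0F_1$-factors with a common argument applies. For (ii) the two factors $\hF{1}{1}{\b}{\c^2}{\pm\l t}$ have the same parameters but opposite arguments, so I would use the corresponding ${}_1F_1$-product formula directly; similarly (iii) and (v) reduce to products of two ${}_2F_0$-factors, respectively two ${}_0F_1$-factors, with opposite arguments, while in (iv) the two ${}_0F_1$-factors already share the common argument $\l st$. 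In every case the product formula produces a single sum over $\nu$ whose numerator and denominator characters supply all but two of the parameters of the target ${}_4F_3$, together with the factor $4$ appearing in its argument.

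The remaining two upper parameters and the precise argument come from the auxiliary summation, which is where the real work lies. Because the inner arguments are paired as $\pm$, after the product formula the surviving argument is quadratic in the auxiliary variable, so summing $\sum_{t}\psi(t)\,\a^2(t)\,\nu(t^2)=-g((\a\nu)^2)$ and applying \eqref{dup. form. gauss} splits $g((\a\nu)^2)$ into the pair $\a,\a\phi$; in (v) both the $s$- and the $t$-sums undergo this squaring, yielding the two pairs $\a,\a\phi$ and $\b,\b\phi$ and the argument $-4\l^2$, whereas in (iv) the two arguments coincide rather than cancel, so the $s$- and $t$-sums merely factor as $(-g(\a\nu))(-g(\b\nu))$, giving the undoubled pair $\a,\b$ and the argument $4\l$ while the doubled pair $\c_1\c_2,\c_1\c_2\phi$ comes from the product formula itself. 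The main obstacle is the exact bookkeeping in this last step: one must track which doubled pair originates from the product formula and which from the $t$-sum together with \eqref{dup. form. gauss} (or \eqref{dup. form. Poch} applied to the lower parameter), and match the result character by character against the prescribed upper and lower characters of each ${}_4F_3$. One must also verify that the cited product formulas are applicable, which is precisely what the hypotheses $\e\notin\{\dots\}$ ensure; for (v), where no such hypothesis is imposed, I would check separately that the degenerate characters cause no breakdown, in the spirit of the case analysis in Lemma \ref{Gauss quad.}.
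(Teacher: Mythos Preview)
Your plan matches the paper's proof: each identity is obtained by applying the appropriate part of Theorem \ref{FA and FC integral rep.}, then a product formula from \cite{Otsubo} or \cite{Otsubo2} for the pair of confluent factors, and finally summing over the auxiliary variable(s) and invoking the duplication formula \eqref{dup. form. Poch}. The only difference is that for (i) the paper applies the ${}_1F_1$-product formula \cite[Corollary 3.3 (v)]{Otsubo2} directly rather than first passing through Proposition \ref{Kummer second} to a pair of ${}_0F_1$-factors as you propose; your detour essentially re-derives that corollary, though you should be aware that the ${}_0F_1$-product formula \cite[Theorem 3.1]{Otsubo2} is stated for \emph{square} lower parameters, whereas after Kummer's transformation your parameters are $\b^2\phi,\c^2\phi$, so quoting the ${}_1F_1$-formula directly is cleaner. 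For (v) the relevant product formula \cite[Theorem 3.2]{Otsubo2} holds without restriction on the characters, so the separate degenerate-case check you mention is unnecessary.
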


\begin{proof}
By Theorem \ref{FA and FC integral rep.} (i), we have
$$\FAA{\a^2\, ;\, \b^2,\c^2}{\b^4, \c^4}{\l, -\l} = -\dfrac{1}{g(\a^2)} \sum_t \psi(t) \a^2(t) \hF{1}{1}{\b^2}{\b^4}{\l t} \hF{1}{1}{\c^2}{\c^4}{-\l t}.$$
Using \cite[Corollary 3.3 (v)]{Otsubo2}, the right-hand side above is equal to
\begin{align*}
& -\dfrac{1}{g(\a^2)} \sum_t \psi(t) \a^2(t) \hF{2}{3}{\b\c, \b\c\phi}{\b^2\phi, \c^2\phi, \b^2\c^2}{\dfrac{\l^2 t^2}{4}}\\
& = -\dfrac{1}{1-q}\cdot \dfrac{1}{g(\a^2)} \sum_\nu \dfrac{(\b\c)_\nu (\b\c\phi)_\nu}{(\e)_\nu^\circ (\b^2\phi)_\nu^\circ (\c^2\phi)_\nu^\circ (\b^2\c^2)_\nu^\circ} \nu\Big(\dfrac{\l^2}{4}\Big) \sum_t \psi(t) \a^2\nu^2(t)\\
& = \dfrac{1}{1-q} \sum_\nu \dfrac{(\a^2)_{\nu^2} (\b\c)_\nu (\b\c\phi)_\nu}{(\e)_\nu^\circ (\b^2\phi)_\nu^\circ (\c^2\phi)_\nu^\circ (\b^2\c^2)_\nu^\circ} \nu\Big(\dfrac{\l^2}{4}\Big).
\end{align*}
Thus, we obtain (i) by applying the duplication formula \eqref{dup. form. Poch} to $(\a^2)_{\nu^2}$.
Similarly, we can prove (ii) by Theorem \ref{FA and FC integral rep.} (i), \cite[Theorem 6.2]{Otsubo} and \eqref{dup. form. Poch}.
We can also prove (iii) by Theorem \ref{FA and FC integral rep.} (ii), \cite[Corollary 3.3 (ii)]{Otsubo2}, \eqref{Gauss sum thm} and \eqref{dup. form. Poch}.

By Theorem \ref{FA and FC integral rep.} (iv), we have
$$\FCC{\a\, ;\, \b}{\c_1^2, \c_2^2}{\l, \l} = \dfrac{1}{g(\a) g(\b)} \sum_{s, t} \psi(s+t)\a(s)\b(t) \hF{0}{1}{}{\c_1^2}{\l st} \hF{0}{1}{}{\c_2^2}{\l st}.$$
Using \cite[Theorem 3.1]{Otsubo2}, the right-hand side above is equal to
\begin{align*}
& \dfrac{1}{g(\a) g(\b)} \sum_{s, t} \psi(s+t)\a(s)\b(t) \hF{2}{3}{\c_1\c_2, \c_1\c_2\phi}{\c_1^2, \c_2^2, \c_1^2\c_2^2}{4\l st}\\
& = \dfrac{1}{1-q}\cdot\dfrac{1}{g(\a) g(\b)}\sum_\nu \dfrac{(\c_1\c_2)_\nu (\c_1\c_2\phi)_\nu}{(\e)_\nu^\circ (\c_1^2)_\nu^\circ (\c_2^2)_\nu^\circ (\c_1^2\c_2^2)_\nu^\circ}\nu(4\l) \sum_{s}\psi(s)\a\nu(s) \sum_t \psi(t)\b\nu(t)\\
& = \dfrac{1}{1-q}\sum_\nu \dfrac{(\a)_\nu (\b)_\nu(\c_1\c_2)_\nu (\c_1\c_2\phi)_\nu}{(\e)_\nu^\circ (\c_1^2)_\nu^\circ (\c_2^2)_\nu^\circ (\c_1^2\c_2^2)_\nu^\circ}\nu(4\l).
\end{align*}
Thus, we obtain (iv).
Similarly, we can prove (v) by Theorem \ref{FA and FC integral rep.} (iv), \cite[Theorem 3.2]{Otsubo2} and \eqref{dup. form. Poch}.
\end{proof}

We obtain the following additional formulas.
(i) and (ii) are finite field analogues of Bailey's formula \cite[(4.6) and (4.2)]{Bailey}.
For (iii), as far as the author knows, the corresponding formula over $\C$ are not known.
\begin{thm} Suppose that $p \neq 2$.
\begin{enumerate}
\item Suppose that $\e \not\in\{\b,\b\ol{\c}^2,\b^2\ol{\c}^2 \}$.
We have
$$\FAA{\a^2\, ;\, \b, \b\ol{\c}^2}{\c^2,\ol{\c}^2}{\l, -\l} = \hF{4}{3}{\a,\a\phi, \b\ol{\c}\phi, \ol{\b}\c\phi}{\phi, \c\phi, \ol{\c}\phi}{\l^2}.$$

\item Suppose that $\e \not\in \{ \a^2, \b^2, \a^2\b^2, \a^2\ol{\b}^2 \}$.
We have
$$\FBB{\a^2, \b^2\, ;\, \ol{\a}^2, \ol{\b}^2}{\c^2}{\l, -\l} = \hF{4}{3}{\a\b, \ol{\a\b}, \a\ol{\b}\phi, \ol{\a}\b\phi}{\phi, \c, \c\phi}{\l^2}.$$

\item For any $\a, \b, \c\in\khat$, we have
$$\FCC{\a^2\, ;\, \b^2}{\c^2, \ol{\c}^2}{\l, -\l} = q^{\d(\c)} \hF{4}{3}{\a,\a\phi, \b, \b\phi}{\phi, \c\phi, \ol{\c}\phi}{-4\l^2}.$$
\end{enumerate}
\end{thm}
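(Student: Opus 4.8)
The plan is to establish all three identities by the mechanism already used in the proof of Theorem \ref{TB formula}: apply the appropriate sum representation from Theorem \ref{FA and FC integral rep.} to rewrite the Appell function on the left as a character sum whose summand is a \emph{product} of two confluent hypergeometric functions, collapse that product into a single hypergeometric function by a product formula of Otsubo--Senoue \cite{Otsubo2}, expand the resulting series and carry out the remaining character sum(s) to reintroduce Pochhammer symbols, and finally apply the duplication formula \eqref{dup. form. Poch} to recognize the ${}_4F_3$ on the right.

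For (i), I would begin from Theorem \ref{FA and FC integral rep.} (i) with $\a$ replaced by $\a^2$, giving
\begin{align*}
-g(\a^2)\FAA{\a^2\, ;\, \b, \b\ol{\c}^2}{\c^2,\ol{\c}^2}{\l, -\l} = \sum_{t\in\k^\times} \psi(t)\a^2(t)\, \hF{1}{1}{\b}{\c^2}{\l t}\,\hF{1}{1}{\b\ol{\c}^2}{\ol{\c}^2}{-\l t}.
\end{align*}
The two ${}_1F_1$ factors are in the inverse-paired shape (bottom characters $\c^2$ and $\ol{\c}^2$, top characters differing by $\ol{\c}^2$) to which an Otsubo--Senoue product formula applies, collapsing the product into a single ${}_2F_3$ with top $\b\ol{\c}\phi, \ol{\b}\c\phi$, bottom $\phi,\c\phi,\ol{\c}\phi$ and argument $\l^2 t^2/4$. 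Expanding this ${}_2F_3$ and using $\sum_t \psi(t)\a^2\nu^2(t) = -g(\a^2\nu^2)$ reintroduces $(\a^2)_{\nu^2}$; applying \eqref{dup. form. Poch} to split $(\a^2)_{\nu^2}=\nu(4)(\a)_\nu(\a\phi)_\nu$ and absorbing $\nu(4)$ against $\nu(\l^2/4)$ produces exactly the claimed ${}_4F_3$ in $\l^2$.

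For (ii) and (iii) the same three moves apply through different building blocks. For (ii) I would use Theorem \ref{FA and FC integral rep.} (ii), so that the summand is the product of the two ${}_2F_0$'s with inverse-paired top characters $\a^2,\ol{\a}^2$ and $\b^2,\ol{\b}^2$; after collapsing it by the corresponding Otsubo--Senoue ${}_2F_0\cdot{}_2F_0$ product formula, I would carry out the $\ol{\c}^2(t)$-weighted sum over $t$, using \eqref{Gauss sum thm} to absorb the prefactor $q/g^\circ(\c^2)$, and then split the squared-index Pochhammer symbols by \eqref{dup. form. Poch} (the denominators $(\e)^\circ_{\nu^2}$ and $(\c^2)^\circ_{\nu^2}$ yielding the bottom characters $\phi$ and $\c,\c\phi$). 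For (iii) I would use Theorem \ref{FA and FC integral rep.} (iv), so that the summand is the product ${}_0F_1(;\c^2;\l st)\,{}_0F_1(;\ol{\c}^2;-\l st)$; after collapsing it by the Otsubo--Senoue ${}_0F_1\cdot{}_0F_1$ product formula, the two \emph{independent} sums $\sum_s \psi(s)\a^2\nu^2(s)=-g(\a^2\nu^2)$ and $\sum_t \psi(t)\b^2\nu^2(t)=-g(\b^2\nu^2)$ reintroduce $(\a^2)_{\nu^2}$ and $(\b^2)_{\nu^2}$, which \eqref{dup. form. Poch} splits into the four top characters $\a,\a\phi,\b,\b\phi$, the two factors $\nu(4)$ combining to give the argument $-4\l^2$.

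The extra factor $q^{\d(\c)}$ in (iii) is the one genuinely delicate point: it should emerge from the $g^\circ$ versus $g$ normalization in the ${}_0F_1\cdot{}_0F_1$ product formula when the two bottom characters are mutually inverse, i.e.\ from the degenerate case $\c=\e$ where $\c^2=\ol{\c}^2=\e$. Accordingly the main obstacle is not the bookkeeping but locating and correctly matching the three Otsubo--Senoue product formulas to these specific inverse-paired parameter patterns (bottoms $\c^2$ against $\ol{\c}^2$, tops twisted by $\ol{\c}^2$ or $\phi$), and, for (iii), verifying that the $g^\circ/g$ discrepancy contributes precisely $q^{\d(\c)}$ and not an additional additive correction term.
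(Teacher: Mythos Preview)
Your proposal is correct and follows essentially the same route as the paper's own proof, which merely says that (i)--(iii) follow, ``similarly to Theorem \ref{TB formula}'', from Theorem \ref{FA and FC integral rep.} (i), (ii), (iv) together with the Otsubo--Senoue product formulas \cite[Corollary 3.3 (iv), (iii), (i)]{Otsubo2}, plus \eqref{Gauss sum thm} for (ii) and \eqref{dup. form. Poch} throughout. Your identification of the mechanism producing $q^{\d(\c)}$ in (iii) (the $g^\circ$ versus $g$ normalization in the degenerate case) is exactly the point the paper leaves implicit.
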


\begin{proof}
Similarly to Theorem \ref{TB formula} (i), we can prove (i) by Theorem \ref{FA and FC integral rep.} (i), \cite[Corollary 3.3 (iv)]{Otsubo2} and \eqref{dup. form. Poch}.
We can also prove (ii) by Theorem \ref{FA and FC integral rep.} (ii), \cite[Corollary 3.3 (iii)]{Otsubo2}, \eqref{Gauss sum thm} and \eqref{dup. form. Poch}.
Similarly to Theorem \ref{TB formula} (iv), we can prove (iii) by Theorem \ref{FA and FC integral rep.} (iv), \cite[Corollary 3.3 (i)]{Otsubo2} and \eqref{dup. form. Poch}.
\end{proof}

\subsection{Finite field analogues of reduction formulas for $F_D$}
                      
The following is a finite field analogue of the multinomial theorem for the Pochhammer symbols.
\begin{lem}\label{multinomial}
Suppose that $\b_1\cdots\b_n \neq \e$. 
Then, for any $\nu \in \khat$,
$$\dfrac{(\b_1\cdots\b_n)_\nu}{(\e)_\nu^\circ} = \dfrac{1}{(1-q)^{n-1}} \sum_{\nu_1\cdots\nu_n = \nu} \prod_{i=1}^n \dfrac{(\b_i)_{\nu_i}}{(\e)_{\nu_i}^\circ}.$$
\end{lem}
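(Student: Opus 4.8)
The plan is to read the sum over $\nu_1\cdots\nu_n=\nu$ as a convolution on the character group $\khat$ and to evaluate it through the Fourier transform of Section 2, exploiting that the coefficients $(\beta)_\nu/(\e)_\nu^\circ$ are, up to sign, the Fourier transform of the function $\lambda\mapsto\hF{1}{0}{\beta}{}{\lambda}$. First I would fix $\beta\in\khat$ and set $f_\beta(\lambda):=\hF{1}{0}{\beta}{}{\lambda}$ on $\k^\times$. Comparing the defining series
\[
\hF{1}{0}{\beta}{}{\lambda}=\frac{1}{1-q}\sum_{\nu\in\khat}\frac{(\beta)_\nu}{(\e)_\nu^\circ}\nu(\lambda)
\]
with the inversion formula \eqref{Fourier trans.} and using the linear independence of the characters of $\k^\times$, I read off $\widehat{f_\beta}(\nu)=-(\beta)_\nu/(\e)_\nu^\circ$ for every $\nu$.

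Next I would compute the convolution appearing on the right-hand side. Writing each factor as $\widehat{f_{\beta_i}}(\nu_i)=-\sum_{t_i}f_{\beta_i}(t_i)\ol{\nu_i}(t_i)$, parametrizing $\nu_n=\nu\nu_1^{-1}\cdots\nu_{n-1}^{-1}$ and summing the free characters $\nu_1,\dots,\nu_{n-1}$, the orthogonality relation $\sum_{\nu\in\khat}\nu(x)=(q-1)\1[x=1]$ collapses the variables $t_1,\dots,t_n$ to a single $t$ and produces a factor $(q-1)^{n-1}$. Concretely, this shows $\sum_{\nu_1\cdots\nu_n=\nu}\prod_i\widehat{f_{\beta_i}}(\nu_i)=(q-1)^{n-1}\,\widehat{F}(\nu)$, where $F:=\prod_i f_{\beta_i}$ is the pointwise product; this is just the convolution-to-product principle for the finite-field Fourier transform.

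The remaining step is to identify $F$. By \eqref{1F0 int.}, $f_{\beta_i}(t)=\ol{\beta_i}(1-t)$ for $t\neq1$, so $F(t)=\ol{\beta_1\cdots\beta_n}(1-t)=\hF{1}{0}{\beta_1\cdots\beta_n}{}{t}$ there. The hypothesis $\beta_1\cdots\beta_n\neq\e$ is used precisely at the exceptional point $t=1$: it forces some $\beta_i\neq\e$, whence $f_{\beta_i}(1)=\hF{1}{0}{\beta_i}{}{1}=0$ and $F(1)=0$, matching $\ol{\beta_1\cdots\beta_n}(0)=0$. Thus $F=f_{\beta_1\cdots\beta_n}$ as functions on $\k^\times$, and applying Step~1 to $\beta_1\cdots\beta_n$ gives $\widehat{F}(\nu)=-(\beta_1\cdots\beta_n)_\nu/(\e)_\nu^\circ$. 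Substituting into the convolution computation and tracking the overall sign $(-1)^n\cdot(-1)=(-1)^{n+1}$, which matches $(1-q)^{n-1}=(-1)^{n+1}(q-1)^{n-1}$, yields $\sum_{\nu_1\cdots\nu_n=\nu}\prod_i(\beta_i)_{\nu_i}/(\e)_{\nu_i}^\circ=(1-q)^{n-1}(\beta_1\cdots\beta_n)_\nu/(\e)_\nu^\circ$, which is the claim after dividing by $(1-q)^{n-1}$.

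The main obstacle I anticipate is not the convolution mechanism, which is routine, but rather the careful sign and normalization bookkeeping together with the handling of the point $t=1$ when some individual $\beta_i$ equals $\e$; the hypothesis $\beta_1\cdots\beta_n\neq\e$ is exactly what rescues the product identity there.
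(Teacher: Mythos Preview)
Your proof is correct and follows essentially the same approach as the paper: both identify each side of the identity as $-\widehat{g}(\nu)$ and $-\widehat{f}(\nu)$ for $g=\prod_i\hF{1}{0}{\b_i}{}{\cdot}$ and $f=\hF{1}{0}{\b_1\cdots\b_n}{}{\cdot}$, then use \eqref{1F0 int.} together with the hypothesis $\b_1\cdots\b_n\neq\e$ (to force $g(1)=f(1)=0$) to conclude $f=g$ on $\k^\times$. The only difference is that you spell out the convolution step the paper leaves as ``one shows''.
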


\begin{proof}
Put
$$f(\l) = \hF{1}{0}{\b_1\cdots\b_n}{}{\l} \quad \mathrm{and}\quad g(\l) = \prod_{i=1}^n \hF{1}{0}{\b_i}{}{\l}.$$
One shows that $-\widehat{f}(\nu)$ is the left-hand side of the lemma and $-\widehat{g}(\nu)$ is the right-hand side of the lemma for each $\nu \in \khat$.
By \eqref{1F0 int.}, for $ \l \neq 1$, we have
$g(\l) = \prod_{i=1}^n \b_i(1-\l) = \b_1\cdots\b_n(1-\l) = f(\l).$
For $\l=1$, noting that $\hF{1}{0}{\a}{}{1}=0$ (when $\a\neq \e$), we have if $\b_1\cdots\b_n\neq\e$ (hence $\b_i\neq \e$ for some $i$), then $f(1)=g(1)=0.$
Thus, $f(\l) = g(\l)$ for any $\l\in\k^\times$, and hence $\widehat{f}(\nu) = \widehat{g}(\nu)$ for all $\nu \in \khat$.
\end{proof}

Put $F_D^{(0)} = 1$.
Over $\C$, it is well known that (cf. \cite{Lauricella}), for a fixed $i = 0, 1, \dots, n-1$, 
\begin{align*}
&\FD{n}{a\, ;\,  b_1, \dots, b_n}{c}{z_1, \dots, z_i, x, \dots, x} \\
& = \FD{i+1}{a\, ;\, b_1, \dots, b_i, \sum_{j=i+1}^n b_j}{c}{z_1, \dots, z_i, x},
\end{align*}
and 
\begin{align*}
& \FD{n}{a\, ;\, b_1, \dots, b_n}{c}{z_1, \dots, z_i, 1, \dots, 1}\\
& = \dfrac{\Gamma(c) \Gamma(c-a-\sum_{j=i+1}^n b_j)}{\Gamma(c-a) \Gamma(c-\sum_{j=i+1}^n b_j)} \FD{i}{a\, ;\, b_1, \dots, b_i}{c-\sum_{j=i+1}^n b_j}{z_1, \dots, z_i}.
\end{align*}
The followings are finite field analogues of them.

\begin{thm}\label{FD red.}$ $
\begin{enumerate}
\item For $i = 0,\dots,n-1$, if $\b_{i+1}\cdots\b_n \neq \e$, then
\begin{align*}
& \FD{n}{\a\, ;\, \b_1,\dots,\b_n}{\c}{\l_1,\dots,\l_i,x,\dots,x}\\
& = \FD{i+1}{\a\, ;\, \b_1,\dots, \b_i, \b_{i+1}\cdots\b_n}{\c}{\l_1,\dots,\l_i,x}.
\end{align*}
In particular, if $ \b_{1}\cdots\b_n \neq \e$, 
\begin{align*}
& \FD{n}{\a\, ;\, \b_1,\dots,\b_n}{\c}{x,\dots,x} = \hF{2}{1}{\a, \b_{1}\cdots\b_n}{\c}{x}.
\end{align*}

\item For $i = 0,\dots,n-1$, if $\b_{i+1}\cdots\b_n \not\in \{ \e, \ol{\a}\c \}$, then
\begin{align*}
& \FD{n}{\a\, ;\, \b_1,\dots,\b_n}{\c}{\l_1,\dots,\l_i,1,\dots,1}\\
& = \dfrac{g^\circ(\c) g(\ol{\a\b_{i+1}\cdots\b_n}\c)}{g^\circ(\ol{\a}\c) g^\circ(\ol{\b_{i+1}\cdots\b_n}\c)}\FD{i}{\a\, ;\, \b_1, \dots,\b_i}{\ol{\b_{i+1}\cdots\b_n}\c}{\l_1,\dots, \l_i}.
\end{align*}
In particular, if $\b_1 \cdots \b_n \not\in \{\e, \ol{\a}\c \}$, we have
$$\FD{n}{\a\, ;\, \b_1,\dots,\b_n}{\c}{1,\dots,1} = \dfrac{g^\circ(\c) g(\ol{\a\b_{1}\cdots\b_n}\c)}{g^\circ(\ol{\a}\c) g^\circ(\ol{\b_{1}\cdots\b_n}\c)}.$$
\end{enumerate}
\end{thm}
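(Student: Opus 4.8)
The plan is to argue entirely from the series definition of $\FD{n}{\a\,;\,\b_1,\dots,\b_n}{\c}{\l_1,\dots,\l_n}$, using Lemma \ref{multinomial} for (i) and the finite-field Gauss summation \eqref{Euler-Gauss} for (ii).

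For (i), since the last $n-i$ arguments all equal $x$, I would group the summation characters $\nu_{i+1},\dots,\nu_n$ according to their product $\mu=\nu_{i+1}\cdots\nu_n$. The only factors coupling all the variables, namely $(\a)_{\nu_1\cdots\nu_n}$ and $(\c)^\circ_{\nu_1\cdots\nu_n}$, depend on the $\nu_j$ $(j>i)$ only through $\mu$, and $\prod_{j>i}\nu_j(x)=\mu(x)$; hence the inner sum factors as $\sum_{\nu_{i+1}\cdots\nu_n=\mu}\prod_{j>i}\frac{(\b_j)_{\nu_j}}{(\e)^\circ_{\nu_j}}$. By Lemma \ref{multinomial} applied to the $n-i$ characters $\b_{i+1},\dots,\b_n$ (legitimate because $\b_{i+1}\cdots\b_n\neq\e$), this equals $(1-q)^{\,n-i-1}\frac{(\b_{i+1}\cdots\b_n)_\mu}{(\e)^\circ_\mu}$. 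Reindexing by $\mu$ and checking that $\frac{(1-q)^{\,n-i-1}}{(1-q)^n}=\frac{1}{(1-q)^{i+1}}$ turns the expression into $\FD{i+1}{\a\,;\,\b_1,\dots,\b_i,\b_{i+1}\cdots\b_n}{\c}{\l_1,\dots,\l_i,x}$, which is (i). The special case $i=0$ gives the ${}_2F_1$ reduction, since $F_D^{(1)}$ is by definition $\hFF{\a,\b_1\cdots\b_n}{\c}{x}$.

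For (ii), I would first invoke (i) with $x=1$ (permitted since $B:=\b_{i+1}\cdots\b_n\neq\e$) to collapse to $\FD{i+1}{\a\,;\,\b_1,\dots,\b_i,B}{\c}{\l_1,\dots,\l_i,1}$, and then carry out the last summation explicitly. Writing $\sigma=\nu_1\cdots\nu_i$ and splitting $(\a)_{\sigma\nu_{i+1}}=(\a)_\sigma(\a\sigma)_{\nu_{i+1}}$ and $(\c)^\circ_{\sigma\nu_{i+1}}=(\c)^\circ_\sigma(\c\sigma)^\circ_{\nu_{i+1}}$ by \eqref{Poch formula}, and using $\nu_{i+1}(1)=1$, the inner sum over $\nu_{i+1}$ becomes $\frac{(\a)_\sigma}{(\c)^\circ_\sigma}(1-q)\,\hFF{\a\sigma,B}{\c\sigma}{1}$. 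Applying the Gauss summation \eqref{Euler-Gauss} gives
\begin{equation*}
\hFF{\a\sigma,B}{\c\sigma}{1}=\frac{g^\circ(\c\sigma)\,g(\ol{\a B}\c)}{g^\circ(\ol\a\c)\,g^\circ(\ol B\c\sigma)},
\end{equation*}
where the $\sigma$-factors cancel in the two arguments $\ol{\a\sigma B}\cdot\c\sigma=\ol{\a B}\c$ and $\ol{\a\sigma}\cdot\c\sigma=\ol\a\c$; its applicability is exactly the hypothesis $\ol{\a B}\c\neq\e$, i.e. $B\neq\ol\a\c$, and this nondegeneracy is uniform in $\sigma$.

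To finish I would rewrite the remaining $\sigma$-dependent Gauss sums through Pochhammer symbols, $g^\circ(\c\sigma)=(\c)^\circ_\sigma\,g^\circ(\c)$ and $g^\circ(\ol B\c\sigma)=(\ol B\c)^\circ_\sigma\,g^\circ(\ol B\c)$. The factor $(\c)^\circ_\sigma$ then cancels against $1/(\c)^\circ_\sigma$, leaving the $\sigma$-independent constant $\frac{g^\circ(\c)\,g(\ol{\a B}\c)}{g^\circ(\ol\a\c)\,g^\circ(\ol B\c)}$ times $\frac{(\a)_\sigma}{(\ol B\c)^\circ_\sigma}$, while the single surviving factor $(1-q)$ reduces the prefactor from $(1-q)^{-(i+1)}$ to $(1-q)^{-i}$. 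With $\sigma=\nu_1\cdots\nu_i$, the residual sum over $\nu_1,\dots,\nu_i$ is precisely $\FD{i}{\a\,;\,\b_1,\dots,\b_i}{\ol B\c}{\l_1,\dots,\l_i}$, which yields (ii) after substituting $B=\b_{i+1}\cdots\b_n$; the case $i=0$ gives the closed-form evaluation. The main obstacle is purely bookkeeping: tracking the $(1-q)$-powers through each reindexing and, above all, separating out exactly the $\sigma$-dependent Gauss-sum factors so that $(\c)^\circ_\sigma$ cancels while $(\ol B\c)^\circ_\sigma$ is left in the denominator to reassemble the reduced $F_D^{(i)}$. One should also confirm that each hypothesis is used once: $B\neq\e$ for Lemma \ref{multinomial}/(i), and $B\neq\ol\a\c$ for the nondegeneracy in \eqref{Euler-Gauss}.
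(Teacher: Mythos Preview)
Your proposal is correct and follows essentially the same route as the paper: part (i) is proved by grouping $\nu_{i+1},\dots,\nu_n$ by their product and invoking Lemma \ref{multinomial}, and part (ii) is obtained from (i) at $x=1$ together with the Gauss summation \eqref{Euler-Gauss} applied to the inner ${}_2F_1(\a\sigma,B;\c\sigma;1)$, then reorganizing the Gauss sums via $(\c)^\circ_\sigma$ and $(\ol{B}\c)^\circ_\sigma$. One small clarification: the applicability condition for \eqref{Euler-Gauss} is $\{\a\sigma,B\}\neq\{\e,\c\sigma\}$; since $B\neq\e$ this forces $B=\c\sigma$ and $\a\sigma=\e$, i.e.\ $B=\ol{\a}\c$, so indeed both hypotheses $B\neq\e$ and $B\neq\ol{\a}\c$ together guarantee the condition for every $\sigma$, exactly as you assert.
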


\begin{rem}
For the case when $n=1$ of (ii), Otsubo \cite[Theorem 4.3 (i)]{Otsubo} gives the formula
\begin{equation}
\hF{2}{1}{\a,\b}{\c}{1} = \dfrac{g^\circ(\c) g(\ol{\a\b}\c)}{g^\circ(\ol{\a}\c) g^\circ(\ol{\b}\c)}, \label{Euler-Gauss}
\end{equation}
when $\{\a, \b\} \neq \{\e, \c\}$.
For general $n$, the latter half of (i) and (ii) are essentially given by He \cite[Theorem 4.7]{He}. 
\end{rem}

\begin{proof}[Proof of Theorem \ref{FD red.}]

(i) By Lemma \ref{multinomial}, we have
\begin{align*}
& \FD{n}{\a\, ;\, \b_1,\dots,\b_n}{\c}{\l_1,\dots,\l_i,x,\dots,x}\\
& = \dfrac{1}{(1-q)^n} \sum_{\nu_1,\dots,\nu_n}\dfrac{(\a)_{\nu_1\cdots\nu_n}\prod_{j=1}^n (\b_j)_{\nu_j}}{(\c)_{\nu_1\cdots\nu_n}^\circ \prod_{j=1}^n (\e)_{\nu_j}^\circ} \nu_{i+1}\cdots\nu_n (x) \prod_{j=1}^i \nu_j(\l_j)\\
& =  \dfrac{1}{(1-q)^n} \sum_{\nu_1,\dots,\nu_i,\nu}\dfrac{(\a)_{\nu_1\cdots\nu_i\nu} \prod_{j=1}^i (\b_j)_{\nu_j}}{(\c)_{\nu_1\cdots\nu_i\nu}^\circ \prod_{j=1}^i (\e)_{\nu_j}^\circ}\nu(x)\prod_{j=1}^i \nu_j(\l_j) \sum_{\nu_{i+1}\cdots\nu_n=\nu} \prod_{j=i+1}^n \dfrac{(\b_j)_{\nu_j}}{(\e)_{\nu_j}^\circ}\\
& = \dfrac{1}{(1-q)^{i+1}} \sum_{\nu_1,\dots,\nu_i,\nu}\dfrac{(\a)_{\nu_1\cdots\nu_i\nu} \prod_{j=1}^i (\b_j)_{\nu_j} (\b_{i+1}\cdots\b_n)_\nu }{(\c)_{\nu_1\cdots\nu_i\nu}^\circ \prod_{j=1}^i (\e)_{\nu_j}^\circ (\e)_\nu^\circ }\nu(x)\prod_{j=1}^i \nu_j(\l_j) \\
& = \FD{i+1}{\a\, ;\, \b_1,\dots, \b_i, \b_{i+1}\cdots\b_n}{\c}{\l_1,\dots,\l_i,x}.
\end{align*}
The particular case can be obtained by $i=0$.
(ii) By (i) and \eqref{Poch formula}, we have
\begin{align*}
& \FD{n}{\a\, ;\, \b_1,\dots,\b_n}{\c}{\l_1,\dots,\l_i,1,\dots,1}\\
& = \FD{i+1}{\a\, ;\, \b_1,\dots,\b_i,\b_{i+1}\cdots\b_n}{\c}{\l_1,\dots,\l_i,1}\\
& = \dfrac{1}{(1-q)^{i}}\sum_{\nu_1,\dots,\nu_i}\dfrac{(\a)_{\nu_1\cdots\nu_i} \prod_{j=1}^i (\b_j)_{\nu_j}}{(\c)_{\nu_1\cdots\nu_i}^\circ \prod_{j=1}^i (\e)_{\nu_j}^\circ}\prod_{j=1}^i \nu_j(\l_j) \hF{2}{1}{\a\nu_1\cdots\nu_i, \b_{i+1}\cdots\b_n}{\c\nu_1\cdots\nu_i}{1}.
\end{align*}
By \eqref{Euler-Gauss} (note that $\{\a\nu_1\cdots\nu_i, \b_{i+1}\cdots\b_n\} \neq \{\e, \c\nu_1\cdots\nu_i\}$ by the assumption), 
\begin{align*}
\hF{2}{1}{\a\nu_1\cdots\nu_i, \b_{i+1}\cdots\b_n}{\c\nu_1\cdots\nu_i}{1} = \dfrac{g^\circ(\c\nu_1\cdots\nu_i) g(\ol{\a\b_{i+1}\cdots\b_n}\c)}{g^\circ(\ol{\a}\c) g^\circ(\ol{\b_{i+1}\cdots\b_n}\c\nu_1\cdots\nu_i)}.
\end{align*}
Thus, noting that
$$\dfrac{g^\circ(\c\nu_1\cdots\nu_i) g(\ol{\a\b_{i+1}\cdots\b_n}\c)}{g^\circ(\ol{\a}\c) g^\circ(\ol{\b_{i+1}\cdots\b_n}\c\nu_1\cdots\nu_i)} = \dfrac{g^\circ(\c) g(\ol{\a\b_{i+1}\cdots\b_n}\c)}{g^\circ(\ol{\a}\c) g^\circ(\ol{\b_{i+1}\cdots\b_n}\c)} \cdot \dfrac{(\c)_{\nu_1\cdots\nu_i}^\circ}{(\ol{\b_{i+1}\cdots\b_n}\c)_{\nu_1\cdots\nu_i}^\circ},$$
we obtain the theorem.
The particular case can be obtained by $i=0$.
\end{proof}

\section*{Acknowledgements}
The author would like to thank Noriyuki Otsubo, Ryojun Ito, Satoshi Kumabe for their helpful comments.
The author also would like to thank Shunya Adachi for the useful discussion and helpful comments about Appell's functions over the complex numbers.
This work was supported by JST FOREST Program (JPMJFR2235).

\section*{Conflict of interest}
The author declares that there is no conflict of interest of any kind in the context of this paper.

\section*{Date availability}
Data sharing not applicable to this article as no datasets were generated or analysed during the current study.

\end{document}